\pgfplotsset{compat=1.14}
\newcommand{\email}[1]{{\tt #1}}
\newcommand{\R}{\mathbb{R}}
\newcommand{\norm}[1]{\|#1\|}
\newcommand{\dist}[1]{{\rm dist}(#1)}
\newcommand{\mv}{\,\big |\,}
\newcommand{\bmv}{\,\Big |\,}
\newcommand{\bbmv}{\,\bigg |\,}
\newcommand{\A}{{\cal A}}
\newcommand{\B}{{\cal B}}
\newcommand{\I}{{\cal I}}
\newcommand{\M}{{\cal M}}
\newcommand{\New}{{\cal N}}
\newcommand{\Sp}{{\mathcal S}}
\newcommand{\F}{{\cal F}}
\newcommand{\Z}{{\cal Z}}
\newcommand{\longsetto}[1]{\mathop{\longrightarrow}\limits^{#1}}
\newcommand{\skalp}[1]{\langle #1\rangle}
\newcommand{\xb}{\bar x}
\newcommand{\yb}{\bar y}
\newcommand{\OO}{{\cal O}}
\newcommand{\gph}{\mathrm{gph}\,}
\newcommand{\dom}{\mathrm{dom}\,}
\newcommand{\tto}{\rightrightarrows}
\newcommand{\Limsup}{\mathop{{\rm Lim}\,{\rm sup}}}
\newcommand{\myvec}[1]{\begin{pmatrix}#1\end{pmatrix}}
\newcommand{\SCD}{SCD\ }
\newcommand{\reg}{{\rm reg\,}}
\newcommand{\ssstar}{semismooth$^{*}$ }
\newcommand{\ee}[2]{{#1}^{(#2)}}
\newcommand{\rge}{{\rm rge\;}}
\newlength{\myAlgBox}
\newtheorem{theorem}{Theorem}[section]
\newtheorem{proposition}[theorem]{Proposition}
\newtheorem{remark}[theorem]{Remark}
\newtheorem{lemma}[theorem]{Lemma}
\newtheorem{corollary}[theorem]{Corollary}
\newtheorem{definition}[theorem]{Definition}
\newtheorem{algorithm}{Algorithm}
\title{On the \SCD semismooth* Newton method \\ for generalized equations with application to a class of static contact problems with Coulomb friction}
\author{Helmut Gfrerer\thanks{Institute of Computational Mathematics, Johannes Kepler University
Linz, A-4040 Linz, Austria; \email{helmut.gfrerer@jku.at}}
\and Michael Mandlmayr\thanks{Institute of Computational Mathematics, Johannes Kepler University
Linz, A-4040 Linz, Austria; \email{michael.mandlmayr@live.com}}
 \and   Ji\v{r}\'{i} V. Outrata\thanks{Institute of Information Theory and Automation, Czech Academy of
 Sciences, 18208 Prague, Czech Republic, and Centre for
              Informatics and Applied Optimization, Federation University of Australia, POB 663,
              Ballarat,  Vic 3350, Australia,  \email{outrata@utia.cas.cz}}
\and{Jan Valdman\thanks{Institute of Information Theory and Automation, Czech Academy of
 Sciences, 18208 Prague, Czech Republic, and
 Department of Applied Mathematics, Faculty of Information Technology, Czech Technical University in Prague, Thákurova 9, 16000 Prague, Czech Republic, \email{jan.valdman@utia.cas.cz}}}}
\date{}
\begin{document}
\maketitle
{\bf Abstract.} In the paper, a variant of the \ssstar Newton method is developed for the numerical solution of generalized equations, in which the multi-valued part is a so-called SCD (subspace containing derivative) mapping. Under a rather mild regularity requirement, the method exhibits (locally) superlinear convergence behavior. From the main conceptual algorithm, two implementable variants are derived whose efficiency is tested via a generalized equation modeling a discretized static contact problem with Coulomb friction.

{\bf Keywords.} Newton method, semismoothness${}^*$, subspace containing derivative, generalized equation, Signorini problem with Coulomb friction.

{\bf AMS Subject classification.} 65K10, 65K15, 90C33.
\section{Introduction}
Consider the {\em generalized equation} (GE)
\begin{equation}\label{EqGE}
  0\in H(x):=f(x)+Q(x),
\end{equation}
where $f:\R^n\to\R^n$ is continuously differentiable and $Q:\R^n\tto\R^n$ is a set-valued mapping with a closed graph. For the numerical solution of (\ref{EqGE}) various methods are available, including the \ssstar Newton method developed in \cite{GfrOut21}. In this method, the approximation/linearization of the multi-valued term in (\ref{EqGE}) is performed on the basis of either the graph of the respective strict derivative or the limiting  coderivative. In each Newton step, one has to solve a linear square system with a non-singular matrix. The method thus differs both from the approach of Josephy (\cite{Jo79a}, \cite{Jo79b}), where the multi-valued part is not approximated at all, and from the Newton-type methods in \cite{Aze} and \cite{Dias}, where the multi-valued term is approximated in a different way.\\

In \cite{GfrOut22a} the authors have shown that for a class of the so-called SCD (subspace containing derivatives) mappings the \ssstar Newton can be improved. In particular, at these mappings, we dispose at each point with linear subspaces belonging to the graphs of the above-mentioned generalized derivatives, which generate the linear systems in the Newton step in a straightforward way. Moreover, the "regularity" requirement, needed to ensure the (locally) superlinear convergence, could have been substantially relaxed.
In \cite{GfrOutVal21} this so-called SCD \ssstar Newton method has been implemented in a class of {\em variational inequalities} (VIs) of the second kind that includes, among various problems of practical importance, also a class of discretized contact problems with {\em Tresca} friction (\cite{GfrOutValsozopol}). The very good performance of the new method, when applied to those problems, has led us to consider more complicated problems, in which $Q$
does not amount to the subdifferential of a convex function. This clearly requires a generalization of the theory of \cite{GfrOutVal21}. As a concrete representative problem of this type, we have chosen a discretized static contact problem, where the Tresca friction is replaced by the (physically more realistic) Coulomb friction.

Starting with the pioneering paper \cite{Necas} there are many papers and a comprehensive monograph \cite{EJ} devoted to static, quasi-static and dynamic contact problems with Coulomb friction for various types of material of the bodies in contact. Concerning the static contact of two elastic bodies or an elastic body with a rigid obstacle, it is known (\cite{Necas}) that this problem has a (not necessarily unique) solution whenever the friction coefficient belongs to the interval $(0,b]$, where $b > 0$ is a bound depending on the Poisson constant. This is a great difference from the corresponding discretized problems where, for small (mesh-dependent) values of the friction coefficient, one has to do with a {\em unique} solution. However, when the friction coefficient increases, discretized models may allow multiple solutions, as shown, e.g., in \cite{Renard}, \cite{Lig},

For simplicity, we consider only the contact of an elastic body with a rigid obstacle
(Signorini problem with friction). From the algebraic point of view, each two-body problem
can be rewritten formally as a one-body problem, see \cite{Wohl11}. Such problems can be modeled as quasi-variational inequalities (QVIs) expressed in terms of the so-called dual variables (having the physical meaning of stresses).
This enables us to employ a variety of methods developed for the numerical solution of QVIs, cf., e.g., \cite[Chapter 5]{GJKR}, \cite{FaKaSa13}. As shown in \cite{KO}, \cite{OKZ}, to these methods we can also count the classical semismooth Newton method ( \cite{Ku88}, \cite{QiSun93}). Another approach has been used in \cite{Bere1}, \cite{BeHaKoKuOut09}, where a solution is computed as a fixed point of a mapping generated by solving the corresponding contact problems with the Tresca friction. These problems can be solved, for example, by a specially tailored minimization routine; see \cite{Kuc}. In this paper, we use a new discrete model formulated in displacements, which is obtained by a modification of the GE used in \cite{BeHaKoKuOut09}. We thus work with a purely "primal" model, well suited for a direct application of the SCD \ssstar Newton method. \\

The plan of the paper is as follows: In Section 2 we recall some standard notions from variational analysis, which are extensively used throughout the paper. Section 3 is devoted to the SCD mappings; we list their basic properties and provide some calculus rules, indispensable in the construction of the
SCD \ssstar Newton method in Section 4 and its subsequent implementation. In Section 4 we present the main conceptual algorithm which is thereafter, in Section 5, implemented to the numerical solution of GE  (\ref{EqGE}). As a result, we obtain an efficient tool, applicable to a wide range of equilibrium models including VIs of the first and second kind, hemivariational inequalities (\cite{HaNi}) and many others. The first part of Section 6 is devoted to the construction of the new model of the discrete contact problem with Coulomb friction mentioned above (Section 6.1). In Section 6.2, it is shown that the multifunction, which arises in the respective GE, is an SCD mapping and possesses the \ssstar property. This finally enables us to specialize the formulas for the approximation and Newton step, developed in Section 5, for the GE considered. The results of the numerical experiments are presented in Section 7.\\

Our notation is basically standard. Given a linear subspace $L \subseteq \R^n$, $L^\perp$ denotes its orthogonal complement. For an element $u \in \R^n$, $\norm{u}$ denotes its
Euclidean norm,
$\B_\epsilon(u)$ denotes the closed ball around $u$ with radius $\epsilon$ and $\mathbb{S}_{\R^n}$ stands for the unit sphere in $\R^n$.  For a matrix $A$, $\rge A$ signifies its range. To avoid possible confusion, in some situations the dimension of a unit matrix $I$ will be indicated by a subscript ($I_n$) . Given a set $\Omega \subset \R^s$, we define the distance
from a point $x$ to $\Omega$ by
$d_{\Omega}(x) :=
\dist{x, \Omega} := \inf\{\norm{y-x}\mv y\in\Omega\}$, the respective indicator function is denoted by
$\delta_{\Omega}$ and $\stackrel{\Omega}{x \rightarrow \bar{x}}$ means convergence within $\Omega$.
When a mapping $F : \R^n \rightarrow \R^m$ is differentiable at $x$, we denote by $\nabla F(x)$ its Jacobian.

\section{Preliminaries}
Throughout the paper, we will frequently use the following basic notions of modern
variational analysis.

 \begin{definition}\label{DefVarGeom}
 Let $A$ be a set in $\mathbb{R}^{s}$, $\bar{x} \in A$ and $A$ be locally closed around $\bar{x}$. Then
\begin{enumerate}
 \item [(i)]The  {\em tangent (contingent, Bouligand) cone}
 to $A$ at $\bar{x}$ is given by
 \[
 T_{A}(\bar{x}):=\Limsup\limits_{t\downarrow 0} \frac{A-\bar{x}}{t}.
 \]
 A tangent $u\in T_A(\xb)$ is called {\em derivable} if $\lim_{t\downarrow 0}\dist{\xb+tu,A}/t=0$. The set $A$ is {\em geometrically
derivable} at $\xb$ if every tangent vector $u$ to $A$ at $\xb$ is derivable.
 \item[(ii)] The set
 \[\widehat{N}_{A}(\bar{x}):=(T_{A}(\bar{x}))^{\circ}\]
 is the {\em regular (Fr\'{e}chet) normal cone} to $A$ at $\bar{x}$, and
 \[N_{A}(\bar{x}):=\Limsup\limits_{\stackrel{A}{x \rightarrow \bar{x}}} \widehat{N}_{A}(x)\]
 is the {\em limiting (Mordukhovich) normal cone} to $A$ at $\bar{x}$. \if{Given a direction $d
 \in\mathbb{R}^{s}$,
\[ N_{A}(\bar{x};d):= \Limsup\limits_{\stackrel{t\downarrow 0}{d^{\prime}\rightarrow
 d}}\widehat{N}_{A}(\bar{x}+ td^{\prime})\]
 is the {\em directional limiting normal cone} to $A$ at $\bar{x}$ {\em in direction} $d$.}\fi
 \end{enumerate}
\end{definition}
In this definition ''$\Limsup$'' stands for the Painlev\' e-Kuratowski {\em outer (upper) set limit}, see, e.g., \cite{RoWe98}.
The above listed cones enable us to describe the local behavior of set-valued maps via various
generalized derivatives. Let $F:\R^n\tto\R^m$ be a (set-valued) mapping with the domain and the graph
\[\dom F:=\{x\in\R^n\mv F(x) \not = \emptyset\},\quad \gph F:=\{(x,y)\in\R^n\times\R^m \mv y\in F(x)\}.\]

\begin{definition}
Consider a (set-valued) mapping $F:\R^n\tto\R^m$ and let $\gph F$ be locally closed around some $(\xb,\yb)\in \gph F$.
\begin{enumerate}
\item[(i)]
 The multifunction $DF(\xb,\yb):\R^n\tto\R^m$, given by $\gph DF(\xb,\yb)=T_{\gph F}(\xb,\yb)$, is called the {\em graphical derivative} of $F$ at $(\xb,\yb)$.
\item [(ii)]  The multifunction $D^\ast F(\xb,\yb ): \R^m \tto \R^n$,  defined by
 \[ \gph D^\ast F(\xb,\yb )=\{(y^*,x^*)\mv (x^*,-y^*)\in N_{\gph F}(\xb,\yb)\}\]
is called the {\em limiting  coderivative} of $F$ at $(\xb,\yb )$.
\end{enumerate}
\end{definition}

Let us now recall the following regularity notions.
\begin{definition}
  Let $F:\R^n\tto\R^m$ be a (set-valued) mapping and let $(\xb,\yb)\in\gph F$.
  \begin{enumerate}
    \item $F$ is said to be {\em metrically subregular at} $(\xb,\yb)$ if there exists a real $\kappa\geq 0$  along with some  neighborhood $X$ of $\xb$ such that
    \begin{equation*}
      \dist{x,F^{-1}(\yb)}\leq \kappa\,\dist{\yb,F(x)}\ \forall x\in X.
    \end{equation*}
  \item $F$ is said to be {\em strongly metrically subregular at} $(\xb,\yb)$ if it is metrically subregular at $(\xb,\yb)$ and there exists a neighborhood $X'$ of $\xb$ such that $F^{-1}(\yb)\cap X'=\{\xb\}$.
  \item $F$ is said to be {\em metrically regular around} $(\xb,\yb)$ if there is some $\kappa\geq 0$ together with neighborhoods $X$ of $\xb$ and $Y$ of $\yb$ such that
      \begin{equation*}
      \dist{x,F^{-1}(y)}\leq \kappa\,\dist{y,F(x)}\ \forall (x,y)\in X\times Y.
    \end{equation*}
  \item $F$ is said to be {\em strongly metrically regular around} $(\xb,\yb)$ if it is metrically regular around $(\xb,\yb)$ and $F^{-1}$ has a single-valued localization around $(\yb,\yb)$, i.e., there are  open neighborhoods $Y'$ of $\yb$, $X'$ of $\xb$ and a mapping $h:Y'\to\R^n$ with $h(\yb)=\xb$ such that $\gph F\cap (X'\times Y')=\{(h(y),y)\mv y\in Y'\}$.
  \end{enumerate}
\end{definition}

It is easy to see that the strong metric regularity around $(\xb,\yb)$ implies the strong metric subregularity  at $(\xb,\yb)$ and the metric regularity around $(\xb,\yb)$ implies the metric subregularity at $(\xb,\yb)$. To check the metric regularity one often employs the so-called
Mordukhovich criterion, see, e.g. \cite[Theorem 9.43]{RoWe98}, according to which this property around $(\xb,\yb)$ is equivalent to the condition
\begin{equation}\label{EqMoCrit}
 0\in D^*F(\xb,\yb)(y^*)\ \Rightarrow\ y^*=0.
\end{equation}
Another useful characterization of metric regularity in terms of the graphical derivative is provided by the so-called Aubin-criterion by Dontchev et al \cite{DonQuiZl06}.
For pointwise characterizations of the other stability properties from Definition 2.3, the reader is referred to \cite[Theorem 2.7]{GfrOut22a}.

In this preparatory section, we end with a definition of the \ssstar property, which paved the way for both the \ssstar Newton method in \cite{GfrOut21} and the SCD \ssstar Newton method in \cite{GfrOut22a}.
\begin{definition}\label{Defssstar}
We say that $F:\R^n\tto\R^n$ is {\em \ssstar}  at $(\xb,\yb)\in\gph F$ if
for every $\epsilon>0$ there is some $\delta>0$ such that the inequality
\begin{align*}
\vert \skalp{x^*,x-\xb}-\skalp{y^*,y-\yb}\vert&\leq \epsilon
\norm{(x,y)-(\xb,\yb)}\norm{(x^*,y^*)}
\end{align*}
holds for all $(x,y)\in \gph F\cap \B_\delta(\xb,\yb)$ and all $(y^*,x^*)$ belonging to $\gph D^\ast F(x,y)$.
\end{definition}

\section{On SCD mappings}
\subsection{Basic properties}
In this section, we wish to recall the basic definitions and characteristics of the \SCD property  introduced in the recent paper \cite{GfrOut22a}.

In what follows, we denote by $\Z_n$ the metric space of all $n$-dimensional subspaces of $\R^{2n}$ equipped with the metric
\[d_{\Z_n}(L_1,L_2):=\norm{P_{L_1}-P_{L_2}},\]
where $P_{L_i}$ is the symmetric $2n\times 2n$ matrix representing the orthogonal projection onto $L_i$, $i=1,2$.

Sometimes, we will also work with bases for the subspaces $L\in\Z_n$. Let $\M_n$ denote the collection of all $2n\times n$ matrices with full column rank $n$ and define for $L\in \Z_n$ the set
\[\M(L):=\{Z\in \M_n\mv \rge Z =L\},\]
i.e., the columns of $Z\in\M(L)$ create a basis for $L$.

We treat every element of $\R^{2n}$ as a column vector. To keep the notation simple, we write $(u,v)$ instead of $\myvec{u\\v}\in\R^{2n}$ when this does not cause confusion.

Let $L\in\Z_n$ and consider $Z\in \M(L)$. Then we can divide $Z$ into two $n\times n$ matrices $A$ and $B$ and write $Z=(A,B)$ instead of $Z=\myvec{A\\B}$. It follows that $\rge(A,B):=\{(Au,Bu)\mv u\in\R^n\}\doteq \{\myvec{Au\\Bu}\mv u\in\R^n\}=L$.

Furthermore, for every $L\in \Z_n$ we can define the {\em adjoint} space
\begin{align}\label{EqDualSubspace}
  L^*&:=\{(-v^*,u^*)\mv (u^*,v^*)\in L^\perp\}.
\end{align}
It can be shown that $(L^*)^*=L$ and $d_{\Z_n}(L_1,L_2)=d_{\Z_n}(L_1^*,L_2^*)$. Thus, the mapping $L\to L^*$ defines an isometry on $\Z_n$.

\begin{definition}\label{DefSCDProperty}
  Consider a mapping $F:\R^n\tto\R^n$.
  \begin{enumerate}
    \item We call $F$ {\em graphically smooth of dimension $n$} at $(x,y)\in \gph F$, if $T_{\gph F}(x,y)=\gph DF(x,y)\in \Z_n$. In addition, we denote by $\OO_F$ the set of all points where $F$ is graphically smooth of dimension $n$.
    \item We associate with $F$ the four mappings $\widehat\Sp F:\gph F\tto \Z_n$, $\widehat\Sp^* F:\gph F\tto \Z_n$, $\Sp F:\gph F\tto \Z_n$, $\Sp^* F:\gph F\tto \Z_n$, given by
    \begin{align*}\widehat\Sp F(x,y)&:=\begin{cases}\{\gph DF(x,y)\}& \mbox{if $(x,y)\in\OO_F$,}\\
    \emptyset&\mbox{else,}\end{cases}\\
    \widehat\Sp^* F(x,y)&:=\begin{cases}\{\gph DF(x,y)^*\}& \mbox{if $(x,y)\in\OO_F$,}\\
    \emptyset&\mbox{else,}\end{cases}\\
    \Sp F(x,y)&:=\Limsup_{(u,v)\longsetto{{\gph F}}(x,y)} \widehat\Sp F(u,v) \\
    &=\{L\in \Z_n\mv \exists (x_k,y_k)\longsetto{{\OO_F}}(x,y):\ \lim_{k\to\infty} d_{\Z_n}(L,\gph DF(x_k,y_k))=0\},\\
    \Sp^* F(x,y)&:=\Limsup_{(u,v)\longsetto{{\gph F}}(x,y)} \widehat\Sp^* F(u,v)\\
    &=\{L\in \Z_n\mv \exists (x_k,y_k)\longsetto{{\OO_F}}(x,y):\ \lim_{k\to\infty} d_{\Z_n}(L,\gph DF(x_k,y_k)^*)=0\}.
    \end{align*}
    \item We say that $F$ has the {\em\SCD (subspace containing derivative) property at} $(x,y)\in\gph F$, if $\Sp^*F(x,y)\not=\emptyset$. $F$ is said to have the \SCD property {\em around} $(x,y)\in\gph F$, if there is a neighborhood $W$ of $(x,y)$ such that $F$ has the \SCD property at every $(x',y')\in\gph F\cap W$.
     Finally, we call $F$ an {\em \SCD mapping} if
    $F$ has the \SCD property at every point of its graph.
  \end{enumerate}
\end{definition}
Since $L\to L^*$ is an isometry on $\Z_n$ and $(L^*)^*=L$, the mappings $\Sp^*F$ and $\Sp F$ are related through
  \begin{equation}\label{EqSp*Sp}\Sp^* F(x,y)=\{L^*\mv L\in \Sp F(x,y)\},\ \Sp F(x,y)=\{L^*\mv L\in \Sp^* F(x,y)\}.\end{equation}
The name \SCD property is motivated by the following statement.
\begin{lemma}[cf.{\cite[Lemma 3.7]{GfrOut22a}}] Let $F:\R^n\tto\R^n$ and let $(x,y)\in \gph F$. Then $L\subseteq \gph D^*F(x,y)$ $\forall L\in \Sp^*F(x,y)$.
\end{lemma}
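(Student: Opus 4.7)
The plan is to unwind the definitions: at points where $F$ is graphically smooth the regular normal cone is a subspace (the orthogonal complement of the tangent subspace), so the "starred" subspace $\gph DF(x_k,y_k)^*$ encodes exactly a piece of $\widehat N_{\gph F}(x_k,y_k)$ after the sign/swap permutation used in the definition of $D^*F$. Then the limiting normal cone is built precisely by taking Painlev\'e--Kuratowski outer limits of regular normal cones, so elements obtained as limits will lie in $\gph D^*F(x,y)$.

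Concretely, I would proceed as follows. Fix $L\in\Sp^*F(x,y)$ and pick an arbitrary $(y^*,x^*)\in L$; the goal is to show $(x^*,-y^*)\in N_{\gph F}(x,y)$. By the definition of $\Sp^*F$, there is a sequence $(x_k,y_k)\longsetto{\OO_F}(x,y)$ with $d_{\Z_n}\big(\gph DF(x_k,y_k)^*,L\big)\to 0$, i.e.\ the orthogonal projections onto $\gph DF(x_k,y_k)^*$ converge in norm to the projection onto $L$. Applying those projections to the fixed vector $(y^*,x^*)\in L$ yields vectors $(y_k^*,x_k^*)\in\gph DF(x_k,y_k)^*$ with $(y_k^*,x_k^*)\to(y^*,x^*)$.

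Now I would translate $(y_k^*,x_k^*)\in\gph DF(x_k,y_k)^*$ back through the definition \eqref{EqDualSubspace}: this means $(x_k^*,-y_k^*)\in\gph DF(x_k,y_k)^\perp$. Since $(x_k,y_k)\in\OO_F$, the tangent cone $T_{\gph F}(x_k,y_k)=\gph DF(x_k,y_k)$ is a linear subspace, so its polar coincides with its orthogonal complement, giving $\widehat N_{\gph F}(x_k,y_k)=\gph DF(x_k,y_k)^\perp$. Therefore $(x_k^*,-y_k^*)\in\widehat N_{\gph F}(x_k,y_k)$ for every $k$, and since $(x_k,y_k)\to(x,y)$ in $\gph F$ while $(x_k^*,-y_k^*)\to(x^*,-y^*)$, the definition of the limiting normal cone yields $(x^*,-y^*)\in N_{\gph F}(x,y)$, i.e.\ $(y^*,x^*)\in\gph D^*F(x,y)$.

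The only nontrivial step is the second one: passing from subspace convergence in $\Z_n$ (which is convergence of projection matrices) to the existence of pointwise approximants $(y_k^*,x_k^*)\in\gph DF(x_k,y_k)^*$ converging to the prescribed element $(y^*,x^*)\in L$. Once this standard fact about the Grassmannian metric is in hand, the rest is a bookkeeping exercise matching the definition \eqref{EqDualSubspace} of $L^*$ to the sign/coordinate convention used in the definition of the coderivative, and then invoking the definition of $N_{\gph F}$ as the outer limit of regular normal cones.
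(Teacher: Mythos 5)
Your proof is correct, and it is the natural unwinding of the definitions: pick $(y^*,x^*)\in L$, take a sequence $(x_k,y_k)\longsetto{\OO_F}(x,y)$ with $\gph DF(x_k,y_k)^*\to L$ in $\Z_n$, push $(y^*,x^*)$ through the converging orthogonal projections to get approximants in $\gph DF(x_k,y_k)^*$, translate via \eqref{EqDualSubspace} to $(x_k^*,-y_k^*)\in \gph DF(x_k,y_k)^\perp=\widehat N_{\gph F}(x_k,y_k)$ (using that at points of $\OO_F$ the tangent cone is a subspace, so polar equals orthogonal complement), and pass to the outer limit to land in $N_{\gph F}(x,y)$. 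The paper itself does not reproduce a proof (the lemma is imported from \cite{GfrOut22a}), but your argument is precisely the intended one; the Grassmannian step you flag is indeed the only nontrivial piece and is correctly handled by the estimate $\|P_{L_k^*}(y^*,x^*)-(y^*,x^*)\|\le\|P_{L_k^*}-P_L\|\,\|(y^*,x^*)\|$.
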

In the recent paper \cite{GfrOut22a} one can find several calculus rules to work with the \SCD property including the next result.
\begin{proposition}[{cf.\cite[Proposition 3.15]{GfrOut22a}}]\label{PropSum}
  Let $F:\R^n\tto\R^n$ have the \SCD property at $(x,y)\in\gph F$ and let $h:U\to\R^n$ be continuously differentiable at $x\in U$ where $U\subseteq\R^n$ is open. Then $F+h$ has the \SCD property at $(x,y+h(x))$ and
  \begin{align}\label{EqSCDSum1}\Sp (F+h)(x,y+h(x))=\Big\{\left(\begin{matrix}I&0\\\nabla h(x)&I\end{matrix}\right)L\mv L\in \Sp F(x,y)\Big\},\\
  \label{EqSCDSum2}\Sp^* (F+h)(x,y+h(x))=\Big\{\left(\begin{matrix}I&0\\\nabla h(x)^T&I\end{matrix}\right)L\mv L\in \Sp^* F(x,y)\Big\}.
  \end{align}
\end{proposition}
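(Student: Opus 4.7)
The plan is to observe that $\gph(F+h)$ is simply the image of $\gph F$ under the $C^1$ diffeomorphism $\Phi:U\times\R^n\to U\times\R^n$ defined by $\Phi(u,v):=(u,v+h(u))$, whose Jacobian at $(u,v)$ is the invertible matrix
\[ M(u):=\begin{pmatrix} I & 0 \\ \nabla h(u) & I \end{pmatrix}, \qquad M(u)^{-1}=\begin{pmatrix} I & 0 \\ -\nabla h(u) & I \end{pmatrix}. \]
Since diffeomorphisms carry tangent cones forward via the chain rule, I would first show that for every $(u,v)$ in a neighborhood of $(x,y)$,
\[ T_{\gph(F+h)}\bigl(u,v+h(u)\bigr) = M(u)\,T_{\gph F}(u,v). \]
Because $M(u)$ is invertible, the left-hand side is an $n$-dimensional subspace of $\R^{2n}$ iff the right-hand side is, so $\Phi$ puts $\OO_F$ into one-to-one correspondence with $\OO_{F+h}$ locally, and at such points $\gph D(F+h)(u,v+h(u)) = M(u)\gph DF(u,v)$.

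Next, to derive \eqref{EqSCDSum1}, I would take a sequence $(u_k,v_k)\longsetto{\OO_F}(x,y)$ with $\gph DF(u_k,v_k)\to L$, use continuity of $\nabla h$ at $x$ to obtain $M(u_k)\to M(x)$, and apply the fact that multiplication by a convergent sequence of invertible $2n\times 2n$ matrices is continuous on $\Z_n$. Concretely, one may pick basis matrices $Z_k\in\M(L_k)$ with $Z_k\to Z\in\M(L)$ and use the projection formula
\[ P_{M(u_k)L_k}=\bigl(M(u_k)Z_k\bigr)\bigl((M(u_k)Z_k)^T M(u_k)Z_k\bigr)^{-1}(M(u_k)Z_k)^T \]
to pass to the limit. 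This yields \eqref{EqSCDSum1} and, in particular, $\Sp(F+h)(x,y+h(x))\neq\emptyset$.

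For \eqref{EqSCDSum2}, I would invoke the duality relation \eqref{EqSp*Sp}, which reduces the claim to the single algebraic identity $(M(x)L)^* = N(x)\,L^*$ with
\[ N(x):=\begin{pmatrix} I & 0 \\ \nabla h(x)^T & I \end{pmatrix}. \]
Writing $J:=\begin{pmatrix} 0 & -I \\ I & 0 \end{pmatrix}$ so that $L^*=JL^\perp$, and using the standard fact $(M(x)L)^\perp=M(x)^{-T}L^\perp$, the identity becomes the block equation $JM(x)^{-T}J^{-1}=N(x)$, which is a short direct computation.

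The only step requiring genuine care is this last identity: the appearance of $\nabla h(x)^T$ (rather than $\nabla h(x)$) in $N(x)$ reflects the symplectic-conjugation nature of $L\mapsto L^*$, and one has to track the transposes and signs carefully in the $2\times 2$ block product. Once this algebraic identity is in place, both formulas follow, and the SCD property of $F+h$ at $(x,y+h(x))$ is an immediate consequence of the non-emptiness of $\Sp F(x,y)$ (equivalently, of $\Sp^*F(x,y)$).
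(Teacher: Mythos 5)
Your argument is correct, and since the paper itself defers the proof of this proposition to \cite[Proposition~3.15]{GfrOut22a}, there is no in-paper proof to compare against; what you give is the natural self-contained argument. The decomposition via the $C^1$-diffeomorphism $\Phi(u,v)=(u,v+h(u))$ with block Jacobian $M(u)$, the transfer of tangent cones $T_{\gph(F+h)}(u,v+h(u))=M(u)\,T_{\gph F}(u,v)$, the continuity of $L\mapsto M(u_k)L$ on $\Z_n$ via stable basis matrices, and the symplectic-conjugation identity $JM(x)^{-T}J^{-1}=N(x)$ that reduces \eqref{EqSCDSum2} to \eqref{EqSCDSum1} through the duality \eqref{EqSp*Sp} are all sound, and the last identity indeed checks out by a direct block computation.

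One small point you should make explicit: as written, the limiting argument establishes only the inclusion
\[
\Sp (F+h)(x,y+h(x))\supseteq\Big\{M(x)L\mv L\in \Sp F(x,y)\Big\},
\]
whereas the statement asserts equality. The reverse inclusion is immediate from the same argument applied with the roles of $F$ and $F+h$ interchanged — i.e., write $F=(F+h)+(-h)$ and use the inverse Jacobian $M(u)^{-1}$ — but since $\Phi$ already furnishes a local bijection between $\OO_F$ and $\OO_{F+h}$, you can equivalently note that any sequence in $\OO_{F+h}$ converging to $(x,y+h(x))$ pulls back under $\Phi^{-1}$ to a sequence in $\OO_F$ converging to $(x,y)$, and $M(u_k)^{-1}\to M(x)^{-1}$ gives the converse limit. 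With that sentence added, the proof is complete.
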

Note that these sum rules are also valid at the points $(x,y)\in\gph F$ where $F$ does not have the \SCD property: In this case, we simply have $\Sp (F+h)(x,y+h(x))=\Sp^*(F+h)(x,y+h(x))=\Sp F(x,y)=\Sp^* F(x,y)=\emptyset$. In addition, we will need some calculus rules for the Cartesian product of mappings. Consider the mapping $F:\prod_{i=1}^p\R^{n_i}\tto \prod_{i=1}^p\R^{n_i}$ defined by
  \begin{equation}\label{EqCartProd}F(x_1,\ldots,x_p):=\prod_{i=1}^pF_i(x_i),\end{equation}
  where each multifunction $F_i:\R^{n_i}\tto\R^{n_i}$, $i=1,\ldots,p$, has a closed graph. Note that
  \begin{align}\label{EqProdEqual}\lefteqn{T_{\gph F}\big((x_1,\ldots,x_p),(y_1,\ldots,y_p)\big)}\\
  \nonumber
  &=\left\{\big((u_1,\ldots,u_p),(v_1,\ldots,v_p)\big)\mv \big((u_1,v_1),\ldots,(u_p,v_p)\big)\in T_{\gph F_1\times\ldots\times\gph F_p}\big((x_1,y_1),\ldots,(x_p,y_p)\big)\right\}\\
  \label{EqInclProd}&\subset\left\{\big((u_1,\ldots,u_p),(v_1,\ldots,v_p)\big)\mv (u_i,v_i)\in T_{\gph F_i}(x_i,y_i),\ i=1,\ldots,p\right\},\end{align}
  where the first equation follows from the identity
  \[\gph F=\left\{\big((x_1,\ldots,x_p),(y_1,\ldots,y_p)\big)\mv \big((x_1,y_1),\ldots,(x_p,y_p)\big)\in \gph F_1\times\ldots\times\gph F_p\right\}\]
  together with \cite[Exercise 6.7]{RoWe98} and the inclusion \eqref{EqInclProd} is a consequence of \cite[Proposition 6.41]{RoWe98}.
\begin{lemma}\label{LemSCDProduct}
  Let $F$ be given by \eqref{EqCartProd} and let $(x,y):=\big((x_1,\ldots,x_p),(y_1,\ldots,y_p)\big)\in\gph F$. Then we have the following:
  \begin{enumerate}
    \item If $(x,y)\in\OO_F$ then each tangent cone $T_{\gph F_i}(x_i,y_i)$, $i=1,\ldots,p$,  is a subspace of $\R^{2n_i}$.
    \item On the contrary, if $(x_i,y_i)\in\OO_{F_i}$, $i=1,\ldots,p$, and all but at most one of the sets $\gph F_i$, $i=1,\ldots,p$,  are geometrically derivable at $(x_i,y_i)$, then $(x,y)\in\OO_F$.
  \end{enumerate}
  \end{lemma}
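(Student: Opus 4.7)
My plan is to exploit the reordering identity \eqref{EqProdEqual} and combine it, for (1), with the inclusion \eqref{EqInclProd} via a lifting trick, and for (2), with an iterated application of \cite[Proposition~6.41]{RoWe98}.

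For part (1), set $T:=T_{\gph F}(x,y)$, which is by assumption an $n$-dimensional linear subspace of $\R^{2n}$. Fix $i\in\{1,\ldots,p\}$ and any $(u_i,v_i)\in T_{\gph F_i}(x_i,y_i)$. Choose $t_k\downarrow 0$ and $(a_i^k,b_i^k)\in\gph F_i$ with $(a_i^k-x_i,b_i^k-y_i)/t_k\to(u_i,v_i)$, and then lift by inserting the constant values $(x_j,y_j)$ in every slot $j\neq i$. Via the reordering in \eqref{EqProdEqual}, the resulting sequence lies in $\gph F$, and its difference quotients converge to the lifted vector $\ell_i(u_i,v_i):=\bigl((0,\ldots,u_i,\ldots,0),(0,\ldots,v_i,\ldots,0)\bigr)$, showing $\ell_i(u_i,v_i)\in T$. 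Because $T$ is a linear subspace, $\ell_i(u_i+u_i',v_i+v_i')$ and $-\ell_i(u_i,v_i)$ also lie in $T$ for any second tangent $(u_i',v_i')\in T_{\gph F_i}(x_i,y_i)$; applying the inclusion \eqref{EqInclProd} to these elements of $T$ forces their $i$-th blocks back into $T_{\gph F_i}(x_i,y_i)$. Combined with the conic, closed structure of tangent cones, this makes $T_{\gph F_i}(x_i,y_i)$ closed under addition, negation and positive scalars, hence a linear subspace.

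For part (2), with each $(x_i,y_i)\in\OO_{F_i}$ and at most one $\gph F_i$ failing to be geometrically derivable at $(x_i,y_i)$, I reorder the factors so that the possibly non-derivable one stands last and peel them off with \cite[Proposition~6.41]{RoWe98}. At each peeling step a geometrically derivable factor is separated, so the identity $T_{A\times B}(\bar a,\bar b)=T_A(\bar a)\times T_B(\bar b)$ applies and extends to the $p$-fold product, giving
\[T_{\gph F_1\times\cdots\times\gph F_p}\bigl((x_1,y_1),\ldots,(x_p,y_p)\bigr)=\prod_{i=1}^{p} T_{\gph F_i}(x_i,y_i).\]
Inserting this into \eqref{EqProdEqual} identifies $T_{\gph F}(x,y)$ (after undoing the coordinate reordering) with a product of the $n_i$-dimensional subspaces $T_{\gph F_i}(x_i,y_i)$, hence with an $n$-dimensional subspace of $\R^{2n}$; consequently $(x,y)\in\OO_F$.

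The principal obstacle is part (1): the pointwise inclusion $T_{\gph F}(x,y)\subseteq\prod_i T_{\gph F_i}(x_i,y_i)$ does not by itself transmit the subspace property of the left-hand side to the individual factors, since a closed cone can easily contain an $n$-dimensional subspace without being one itself. The lifting construction is the one nontrivial ingredient: it produces, for each prescribed tangent of $F_i$, an element of $T$ whose $i$-th block equals that tangent and whose remaining blocks vanish, which is exactly what is needed to convert the linearity of $T$ into the subspace structure of each $T_{\gph F_i}(x_i,y_i)$.
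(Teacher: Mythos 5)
Your proof is correct and follows essentially the same route as the paper. In part (1), your lifting construction (sequences with constant entries in all slots $j\neq i$, producing $\ell_i(u_i,v_i)$ with zeros outside the $i$-th block) is exactly the paper's set $L_i:=\{0_{m_1}\}\times T_{\gph F_i}(x_i,y_i)\times\{0_{m_2}\}$, and the subsequent step — take linear combinations in the subspace $T$, then use \eqref{EqInclProd} to push the $i$-th block back into $T_{\gph F_i}(x_i,y_i)$ — matches the paper's argument line for line. In part (2), both arguments reduce to the fact that \eqref{EqInclProd} holds with equality when all but at most one factor is geometrically derivable; the only discrepancy is the citation. The paper invokes \cite[Proposition 1]{GfrYe17a} for this equality, whereas you cite \cite[Proposition 6.41]{RoWe98}. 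That proposition gives the inclusion for tangent cones of products but states equality only under Clarke regularity of the factors, not under geometric derivability; the derivability-based sufficient condition for equality is a true and elementary refinement (fix the scale sequence coming from the possibly non-derivable factor and use derivability of the rest to match it), but it is not literally Proposition 6.41, so you should either spell that argument out or cite a source such as \cite[Proposition 1]{GfrYe17a} that states it.
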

\begin{proof}In order to prove the first assertion, let $(x,y)\in\OO_F$, let $i\in\{1,\ldots,p\}$ be arbitrarily fixed and consider the set
\[L_i:=\{0_{m_1}\}\times T_{\gph F_i}(x_i,y_i)\times\{0_{m_2}\}\quad\mbox{with}\quad m_1:=2\sum_{k=1}^{i-1}n_k,\ m_2:=2\sum_{k=i+1}^{p}n_k.\]
Then it readily follows from Definition \ref{DefVarGeom}(i) that $L_i$ is a subset of $T:=T_{\gph F_1\times\ldots\times\gph F_p}\big((x_1,y_1),\ldots,(x_p,y_p)\big)$, which is a subspace by \eqref{EqProdEqual}. Consider two tangents $t_1,t_2\in T_{\gph F_i}(x_i,y_i)$ together with two scalars $\mu_1,\mu_2$. Since $(0_{m_1},t_j,0_{m_2})\in L_i\subset T$, $j=1,2$, we conclude
\[\mu_1(0_{m_1},t_1,0_{m_2})+\mu_2(0_{m_1},t_2,0_{m_2})=(0_{m_1},\mu_1t_1+\mu_2t_2t_1,0_{m_2})\in T\]
and from \eqref{EqInclProd} we deduce $\mu_1t_1+\mu_2t_2\in T_{\gph F_i}(x_i,y_i)$. This proves that $T_{\gph F_i}(x_i,y_i)$ is a subspace.

The second statement follows from the fact that under the stated assumption, inclusion \eqref{EqInclProd} holds with equality, cf. \cite[Proposition 1]{GfrYe17a}.
\end{proof}
\begin{proposition}\label{PropSCDProduct}
  Let $F$ be given by \eqref{EqCartProd} and assume that all the mappings $F_i$, $i=1,\ldots,p$, are \SCD mappings. If all, but at most one, of the mappings $F_i$, $i=1,\ldots,p$, have the property that $\gph  F_i$ is geometrically derivable at every point $(x_i,y_i)\in\OO_{F_i}$, then $F$ is an \SCD mapping, and
  \begin{equation}
    \label{EqInclOO_F}\OO_F\supset\left\{\big((x_1,\ldots,x_p),(y_1,\ldots,y_p)\big)\mv (x_i,y_i)\in\OO_{F_i},\ i=1,\ldots,p\right\}.
  \end{equation}
  Moreover, for every $(x,y)=\big((x_1,\ldots,x_p),(y_1,\ldots,y_p)\big)\in\gph F$ there holds
  \begin{gather}
    \label{EqInclSpProd}\Sp F(x,y)\supset\Big\{\left\{\big((u_1,\ldots,u_p),(v_1,\ldots,v_p)\big)\mv (u_i,v_i)\in L_i\right\}\mv L_i\in \Sp F_i(x_i,y_i),\ i=1,\ldots,p\Big\},\\
    \label{EqInclSp*Prod}\Sp^* F(x,y)\supset\Big\{\left\{\big((v_1^*,\ldots,v_p^*),(u_1^*,\ldots,u_p^*)\big)\mv (v_i^*,u_i^*)\in L_i\right\}\mv L_i\in \Sp^* F_i(x_i,y_i),\ i=1,\ldots,p\Big\}.
  \end{gather}
  The equality holds in the inclusions \eqref{EqInclOO_F}, \eqref{EqInclSpProd} and \eqref{EqInclSp*Prod} if, in addition, all but at most one of the mappings $F_i$, $i=1,\ldots,p$, have the following property: For every $(x_i,y_i)\in\gph F_i$ such that $T_{\gph F_i}(x_i,y_i)$ is a subspace, the dimension of this subspace is $n_i$.
\end{proposition}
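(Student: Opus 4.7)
The plan is to first establish the three set inclusions \eqref{EqInclOO_F}, \eqref{EqInclSpProd}, \eqref{EqInclSp*Prod}, which automatically yield the \SCD property of $F$ since each $\Sp^{*}F_i(x_i,y_i)$ is non-empty by assumption, and then to upgrade each inclusion to an equality under the extra dimensionality hypothesis.

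\textbf{Inclusions.} For \eqref{EqInclOO_F} I would take $(x,y)$ with every $(x_i,y_i)\in\OO_{F_i}$ and apply Lemma \ref{LemSCDProduct}(ii) together with \eqref{EqProdEqual} and the equality version of \eqref{EqInclProd} (valid under geometric derivability, cf.\ \cite[Proposition 1]{GfrYe17a}) to see that $T_{\gph F}(x,y)$ is, after a fixed coordinate permutation $\Pi$ that interchanges the ``graph-of-product'' and ``product-of-graphs'' layouts, exactly the Cartesian product of the $n_i$-dimensional subspaces $T_{\gph F_i}(x_i,y_i)$, hence an $n$-dimensional subspace of $\R^{2n}$. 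For \eqref{EqInclSpProd} I would pick for each $i$ an approximating sequence $(x_i^k,y_i^k)\to(x_i,y_i)$ in $\OO_{F_i}$ with $d_{\Z_{n_i}}(\gph DF_i(x_i^k,y_i^k),L_i)\to 0$, assemble them coordinatewise into $(x^k,y^k)\in\OO_F$, and observe that after $\Pi$ the projector $P_{\gph DF(x^k,y^k)}$ is block-diagonal with blocks $P_{\gph DF_i(x_i^k,y_i^k)}$. Operator-norm convergence of the blocks yields operator-norm convergence of the block-diagonal matrix, so the ``interleaved product'' of the $L_i$ lies in $\Sp F(x,y)$. The inclusion \eqref{EqInclSp*Prod} then follows from \eqref{EqSp*Sp} together with a direct computation from \eqref{EqDualSubspace} showing that the dual of an interleaved product of subspaces equals the interleaved product of the duals (because $\tilde L^{\perp}$ decouples into the factors $\tilde L_i^{\perp}$).

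\textbf{Equalities.} Assume now the extra dimensionality hypothesis. For the reverse of \eqref{EqInclOO_F}, take $(x,y)\in\OO_F$; Lemma \ref{LemSCDProduct}(i) makes every $T_{\gph F_i}(x_i,y_i)$ a subspace, the dimensionality hypothesis forces $\dim T_{\gph F_i}(x_i,y_i)=n_i$ for all non-exceptional indices, and the identity $n=\sum_i\dim T_{\gph F_i}(x_i,y_i)$ (from the product formula for the tangent cone, which requires only the geometric derivability of $p-1$ factors) forces the dimension to be $n_i$ for the remaining exceptional index as well; hence every $(x_i,y_i)\in\OO_{F_i}$. For the reverse of \eqref{EqInclSpProd}, take $L\in\Sp F(x,y)$ with witness sequence $(x^k,y^k)\to(x,y)$ in $\OO_F$; the reverse inclusion just proved puts each $(x_i^k,y_i^k)$ into $\OO_{F_i}$, so (after $\Pi$) $P_{\gph DF(x^k,y^k)}$ is block-diagonal, and compression to each diagonal block preserves norm convergence. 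A rank count---each block projector has rank at most $n_i$ while the total rank is $n=\sum_i n_i$---prevents rank drop in the limit, so the block limits are projectors onto subspaces $L_i\in\Sp F_i(x_i,y_i)$ and $L$ is their interleaved product. The reverse of \eqref{EqInclSp*Prod} again follows from \eqref{EqSp*Sp}.

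The principal bookkeeping difficulty is cleanly juggling the two product structures---$\gph F$ versus $\prod_i\gph F_i$---and tracking how they interact with the coordinate swap $L\mapsto L^{*}$ built into \eqref{EqDualSubspace}. Once the permutation $\Pi$ is fixed and the identity $(\tilde L_1\times\cdots\times \tilde L_p)^{*}=\tilde L_1^{*}\times\cdots\times \tilde L_p^{*}$ (in the interleaved sense) is verified once, the remainder reduces to continuity of block-diagonal projection matrices and the simple rank argument above.
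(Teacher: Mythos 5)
Your proof follows essentially the same route as the paper's: both rely on Lemma~\ref{LemSCDProduct} for \eqref{EqInclOO_F} and \eqref{EqInclSpProd}, obtain \eqref{EqInclSp*Prod} via the interleaved dual identity together with \eqref{EqSp*Sp}, and derive the equality case from Lemma~\ref{LemSCDProduct}(i), the dimensionality hypothesis, and the tangent-cone product formula under geometric derivability. The only (immaterial) variation is that you get $\Sp^*F(x,y)\neq\emptyset$ directly from the non-emptiness of the right-hand side of \eqref{EqInclSp*Prod}, whereas the paper argues via density of $\OO_F$ in $\gph F$; both are valid.
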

\begin{proof}The inclusions \eqref{EqInclOO_F} and \eqref{EqInclSpProd} follow immediately from Lemma \ref{LemSCDProduct}. Since $F_i$, $i=1,\ldots, p$, are \SCD mappings, $\OO_{F_i}$ is dense in $\gph F_i$ and from \eqref{EqInclOO_F} we conclude that $\OO_F$ is dense in $\gph F$. This proves that $F$ is an \SCD mapping. Now consider subspaces $L_i\in \Sp F_i(x_i,y_i)$, $i=1,\ldots,p$. By taking into account the relation
\[\left\{\big((u_1,\ldots,u_p),(v_1,\ldots,v_p)\big)\mv (u_i,v_i)\in L_i\right\}^*=\left\{\big((v_1^*,\ldots,v_p^*),(u_1^*,\ldots,u_p^*)\big)\mv (v_i^*,u_i^*)\in L_i^*\right\},\]
the inclusion \eqref{EqInclSp*Prod} follows from \eqref{EqInclSpProd} and \eqref{EqSp*Sp}. The statement about equality in \eqref{EqInclOO_F} is again a consequence of Lemma \ref{LemSCDProduct} implying equality in \eqref{EqInclSpProd} and \eqref{EqInclSp*Prod}.
\end{proof}

The following large class of graphically Lipschitzian mappings covers many mappings important in applications; cf. \cite{Ro85}, and is also important in the context of \SCD mappings.
\begin{definition}[cf.{\cite[Definition 9.66]{RoWe98}}]\label{DefGraphLip}A mapping $F:\R^n\tto\R^m$ is {\em graphically Lipschitzian of dimension $d$} at $(\xb,\yb)\in\gph F$ if there is an open neighborhood $W$ of $(\xb,\yb)$ and a one-to-one mapping $\Phi$ from $W$ onto an open subset of $\R^{n+m}$ with $\Phi$ and $\Phi^{-1}$ continuously differentiable, such that $\Phi(\gph F\cap W)$  is the graph of a Lipschitz continuous mapping $f:U\to\R^{n+m-d}$, where $U$ is an open set in $\R^d$.
\end{definition}

Every multifunction $F:\R^n\tto\R^n$ that is graphically Lischitzian of dimension $n$ at some point $(x,y)\in\gph F$, has the \SCD property around $(x,y)$ by \cite[Proposition 3.17]{GfrOut22a}. We now state another property related to Proposition \ref{PropSCDProduct}.

\begin{lemma}\label{LemGraphLisch}Let $F:\R^n\tto\R^n$ be graphically Lipschitzian of dimension $n$ at $(\xb,\yb)\in\gph F$. Then there is an open neighborhood $W$ of $(\xb,\yb)$ such that for all $(x,y)\in\gph F\cap W$ the following properties hold:
\begin{enumerate}
  \item[(i)]If $(x,y)\in\OO_F$ then $\gph F$ is geometrically derivable at $(x,y)$.
  \item[(ii)]If $T_{\gph F}(x,y)$ is a subspace, then the dimension of this subspace is $n$.
\end{enumerate}
\end{lemma}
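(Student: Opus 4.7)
The plan is to reduce both statements, via the $C^1$-diffeomorphism $\Phi$ from Definition \ref{DefGraphLip}, to corresponding statements about the graph of the Lipschitz map $f:U\to\R^n$, and then exploit the Lipschitz bound to control tangent vectors. After possibly shrinking $W$, the map $\Phi$ is a $C^1$-diffeomorphism between $W$ and an open subset $V\subseteq\R^{2n}$ with $\Phi(\gph F\cap W)=\gph f$. For any $(x,y)\in\gph F\cap W$, write $(u,v):=\Phi(x,y)\in\gph f$, so $v=f(u)$. The standard transformation rule for tangent cones under $C^1$-diffeomorphisms yields
\[
T_{\gph F}(x,y)=\nabla\Phi(x,y)^{-1}\,T_{\gph f}(u,f(u)),
\]
and since $\nabla\Phi(x,y)$ is a linear isomorphism of $\R^{2n}$, both the property ``is a subspace'' and the dimension of that subspace are preserved. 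Moreover, geometric derivability transfers in both directions under $\Phi$ and $\Phi^{-1}$ (if $A$ is derivable at $x$ and $\Phi$ is $C^1$ near $x$, then a routine linearization shows $\Phi(A)$ is derivable at $\Phi(x)$). Hence it suffices to prove (i) and (ii) for $\gph f$ at arbitrary $(u,f(u))$, with $L$ denoting the Lipschitz constant of $f$.

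For statement (ii), suppose $S:=T_{\gph f}(u,f(u))$ is a subspace of $\R^{2n}$. Let $\pi_1:\R^{2n}\to\R^n$ denote projection onto the first $n$ coordinates. For any $d\in\R^n$ and $t_k\downarrow 0$, the vectors $(d,(f(u+t_kd)-f(u))/t_k)$ lie in $(\gph f-(u,f(u)))/t_k$ and are bounded by the Lipschitz property; passing to a cluster point shows $(d,v_d)\in S$ for some $v_d$, so $\pi_1|_S$ is surjective and $\dim S\ge n$. On the other hand, any $(0,w)\in S$ arises as a limit $(u_k-u,f(u_k)-f(u))/t_k$ with $(u_k-u)/t_k\to 0$, and the Lipschitz bound forces $\|w\|\le L\cdot 0=0$; hence $\ker(\pi_1|_S)=\{0\}$ and $\dim S\le n$. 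Combined, $\dim S=n$.

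For statement (i), suppose $(x,y)\in\OO_F$, so $T_{\gph F}(x,y)$ is an $n$-dimensional subspace; then so is $S=T_{\gph f}(u,f(u))$. By the argument above, $\pi_1|_S$ is a bijection, so $S$ is the graph of a unique linear map $M:\R^n\to\R^n$. I would then upgrade this to Fr\'echet differentiability of $f$ at $u$ with derivative $M$: if this failed, there would exist $\epsilon>0$ and $u_k\to u$, $u_k\ne u$, with $\|f(u_k)-f(u)-M(u_k-u)\|>\epsilon\|u_k-u\|$; setting $t_k:=\|u_k-u\|$ and $d_k:=(u_k-u)/t_k$, compactness and the Lipschitz property allow extraction of $d_k\to d$ and $(f(u_k)-f(u))/t_k\to w$ with $\|w-Md\|\ge\epsilon$, yet $(d,w)\in S=\gph M$ forces $w=Md$, a contradiction. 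Once $f$ is Fr\'echet differentiable at $u$, the geometric derivability of $\gph f$ at $(u,f(u))$ follows immediately: for any tangent $(d,Md)$ and $t\downarrow 0$,
\[
\dist\big((u+td,f(u)+tMd),\gph f\big)\le\|f(u+td)-f(u)-tMd\|=o(t).
\]
Transferring back through $\Phi^{-1}$ yields geometric derivability of $\gph F$ at $(x,y)$.

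The main obstacle I foresee is bookkeeping around the tangent-cone transfer under the $C^1$-diffeomorphism and the Fr\'echet-differentiability step for $f$; once those are in place, both assertions collapse to the two elementary Lipschitz-cone observations (surjectivity of $\pi_1|_S$ and triviality of its kernel). Everything else, including the choice of neighborhood $W$ and preservation of geometric derivability, is routine.
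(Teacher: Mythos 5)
Your proof is correct and takes essentially the same route as the paper: reduce to the Lipschitz graph $\gph f$ via the $C^1$-diffeomorphism $\Phi$ and then exploit the Lipschitz bound on difference quotients. The only difference is that you supply elementary self-contained arguments (injectivity/surjectivity of the coordinate projection $\pi_1|_S$ for (ii), and a direct reproof of Fr\'echet differentiability from single-valuedness of the graphical derivative for (i)) where the paper outsources to \cite[Remark 3.18]{GfrOut22a} and \cite[Proposition 8.41]{RoWe98} for (i) and to \cite[Exercise 9.25]{RoWe98} for (ii).
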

\begin{proof}
  Regarding property (i), we refer to \cite[Remark 3.18]{GfrOut22a} and \cite[Proposition 8.41]{RoWe98}. To show the second statement, let $W$, $\Phi$, $U$ and $f$ be as in Definition \ref{DefGraphLip} and consider $(x,y)\in \gph F\cap W$ such that $T_{\gph F}(x,y)$ is a subspace. Denoting $(u,f(u))= \Phi(x,y)$, we obtain that
  \[T_{\gph f}(u,f(u))=\nabla \Phi(x,y)T_{\gph F}(x,y)\]
  is a subspace with the same dimension as $T_{\gph F}(x,y)$. Therefore, the graphical derivative $Df(u,f(u))$ is a linear mapping and  $f$ is Fr\'echet differentiable at $u$ by \cite[Exercise 9.25]{RoWe98}. Since $T_{\gph f}(u,f(u))=\rge(I,\nabla f(u))$, the dimension of $T_{\gph f}(u,f(u))$ is $n$, which is the same as the dimension of $T_{\gph F}(x,y)$.
\end{proof}

Next, we turn to the notion of \SCD regularity.
\begin{definition}
\begin{enumerate}
\item We denote by $\Z_n^{\rm reg}$ the collection of all subspaces $L\in\Z_n$ such that
  \begin{equation*}
    (y^*,0)\in L\ \Rightarrow\ y^*=0.
  \end{equation*}
  \item  A mapping $F:\R^n\tto\R^n$ is called {\em \SCD regular around} $(x,y)\in\gph F$, if $F$ has the \SCD property around $(x,y)$ and
  \begin{equation}\label{EqSCDReg}
    (y^*,0)\in L \Rightarrow\ y^*=0\ \forall L\in \Sp^*F(x,y),
  \end{equation}
  i.e., $L\in \Z_n^{\rm reg}$ for all $L\in \Sp^*F(x,y)$. Further, we will denote by
  \[{\rm scd\,reg\;}F(x,y):=\sup\{\norm{y^*}\mv (y^*,x^*)\in L, L\in \Sp^*F(x,y), \norm{x^*}\leq 1\}\] 
  the {\em modulus of \SCD regularity} of $F$ around $(x,y)$.
\end{enumerate}
\end{definition}
Since the elements of $\Sp^*F(x,y)$ are contained in $\gph D^*F(x,y)$, it follows from the Mordukhovich criterion \eqref{EqMoCrit} that \SCD regularity is weaker than metric regularity, and consequently \SCD regularity is also weaker than strong metric regularity.

In the following proposition, we state some basic properties of subspaces $L\in\Z_n^{\rm reg}$.
\begin{proposition}[cf.{\cite[Proposition 4.2]{GfrOut22a}}] \label{PropC_L}
    We have $L\in \Z_n^{\rm reg}$ if and only if for every $(A,B)\in\M(L)$ the matrix $B$ is not singular. Thus,
    for every $L\in \Z_n^{\rm reg}$   there is a unique $n\times n$ matrix $C_L$  such that $L=\rge(C_L,I)$. Further, $L^*=\rge(C_L^T,I)\in\Z_n^{\rm reg}$,
    \begin{equation*}
    \skalp{x^*,C_L^Tv}=\skalp{y^*,v}\ \forall (y^*,x^*)\in L\forall v\in\R^n.
    \end{equation*}
    and
    \begin{equation*}
    \norm{y^*}\leq \norm{C_L}\norm{x^*}\ \forall (y^*,x^*)\in L.
  \end{equation*}
\end{proposition}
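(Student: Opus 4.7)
The plan is to reduce every assertion to a manipulation of a basis matrix $(A,B)\in\M(L)$ and then, once the second block has been shown to be nonsingular, to work throughout with the normalized basis $(C_L,I)$. The whole statement is essentially a bookkeeping exercise, so I do not expect any deep obstacle; the only point demanding care is matching the coordinate swap and the sign change in the definition \eqref{EqDualSubspace} of $L^*$.

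First I would establish the equivalence between $L\in\Z_n^{\rm reg}$ and the nonsingularity of $B$. Since every element of $L=\rge(A,B)$ has the form $(Au,Bu)$ with $u\in\R^n$, the condition $(y^*,0)\in L\Rightarrow y^*=0$ is precisely: $Bu=0$ implies $Au=0$. The fact that $(A,B)\in\M_n$ has full column rank $n$ forces $\ker A\cap\ker B=\{0\}$, so this implication is equivalent to $\ker B=\{0\}$, i.e., to $B$ being nonsingular. Because the equivalence does not depend on the chosen basis, I am free to pick any $(A,B)\in\M(L)$ once $L\in\Z_n^{\rm reg}$ is known.

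Assuming $B$ nonsingular, I would set $C_L:=AB^{-1}$ and reparametrize by $v:=Bu$, obtaining $L=\{(C_L v,v)\mv v\in\R^n\}=\rge(C_L,I)$. Uniqueness of $C_L$ is immediate: if $\rge(C,I)=\rge(C',I)$, then for every $v$ the equality $(Cv,v)=(C'w,w)$ forces $w=v$ and hence $Cv=C'v$, so $C=C'$.

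For the identification $L^*=\rge(C_L^T,I)$, I would compute $L^\perp$ directly from $L=\rge(C_L,I)$. The orthogonality condition $\skalp{C_L v,u^*}+\skalp{v,v^*}=0$ for every $v\in\R^n$ reduces to $C_L^T u^*+v^*=0$, giving $L^\perp=\{(u^*,-C_L^T u^*)\mv u^*\in\R^n\}$. Plugging this into \eqref{EqDualSubspace} yields $L^*=\{(C_L^T u^*,u^*)\mv u^*\in\R^n\}=\rge(C_L^T,I)$, which lies in $\Z_n^{\rm reg}$ by the first characterization, applied now with second block equal to $I$. Finally, each $(y^*,x^*)\in L=\rge(C_L,I)$ satisfies $y^*=C_L x^*$, from which $\skalp{x^*,C_L^T v}=\skalp{C_L x^*,v}=\skalp{y^*,v}$ and $\norm{y^*}\leq\norm{C_L}\norm{x^*}$ follow immediately.
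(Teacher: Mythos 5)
Your argument is correct throughout and there is nothing to compare it against in this paper: Proposition~\ref{PropC_L} is stated with a citation to \cite[Proposition 4.2]{GfrOut22a} and no proof is given here, only the remark $C_L=AB^{-1}$, which your derivation also produces. The reduction of $\Z_n^{\rm reg}$ to nonsingularity of $B$ via $\ker A\cap\ker B=\{0\}$, the normalization $v=Bu$ giving $L=\rge(C_L,I)$, the computation of $L^\perp$ followed by the coordinate swap from \eqref{EqDualSubspace}, and the final identities from $y^*=C_Lx^*$ are exactly the direct linear-algebra steps one would expect, and each is carried out correctly.
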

Note that for every $L\in\Z_n^{\reg}$ there is $C_L=AB^{-1}$ for all $(A,B)\in \M(L)$.
Combining \cite[Equation  (34), Lemma 4.7 and Proposition 4.8]{GfrOut22a} we obtain the following lemma.
\begin{lemma}\label{LemSCDReg}
   Assume that $F:\R^n\tto\R^n$ is \SCD regular around $(\xb,\yb)\in\gph F$. Then
   \[{\rm scd\,reg\;}F(\xb,\yb)=\sup\{\norm{C_L}\mv L\in\Sp^*F(\xb,\yb)\}<\infty.\]
   Moreover, $F$ is \SCD regular around every $(x,y)\in\gph F$ sufficiently close to $(\xb,\yb)$ and
  \[\limsup_{(x,y)\longsetto{\gph F}(\xb,\yb)}{\rm scd\,reg\;}F(x,y)\leq{\rm scd\,reg\;}F(\xb,\yb).\]
\end{lemma}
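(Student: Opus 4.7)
The plan is to piece the lemma together from the three cited results of \cite{GfrOut22a}, each of which supplies one of the three assertions.

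First I would establish the equality ${\rm scd\,reg\;}F(\xb,\yb)=\sup\{\norm{C_L}\mv L\in\Sp^*F(\xb,\yb)\}$. By \SCD regularity, every $L\in\Sp^*F(\xb,\yb)$ lies in $\Z_n^{\rm reg}$, so by Proposition \ref{PropC_L} we have $L=\rge(C_L,I)$; any element $(y^*,x^*)\in L$ is of the form $(C_Lu,u)$, whence $\sup\{\norm{y^*}\mv (y^*,x^*)\in L,\ \norm{x^*}\leq 1\}=\norm{C_L}$. Taking the supremum over $L\in\Sp^*F(\xb,\yb)$ yields the claimed identity, which is essentially the content of Equation (34) of \cite{GfrOut22a}.

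Next, to obtain finiteness and the limsup bound, I would invoke the continuity-type statement of \cite[Lemma 4.7]{GfrOut22a}, which says that on the open subset $\Z_n^{\rm reg}\subset\Z_n$ the assignment $L\mapsto C_L$ is continuous with respect to the projection metric $d_{\Z_n}$. Since $\Z_n$ is compact and $\Sp^*F(\xb,\yb)$ is a closed subset of $\Z_n$ (being a Painlev\'e--Kuratowski outer limit), it is compact; by \SCD regularity it is contained in the open set $\Z_n^{\rm reg}$, and therefore $L\mapsto\norm{C_L}$ attains its maximum there, giving $\sup\{\norm{C_L}\mv L\in\Sp^*F(\xb,\yb)\}<\infty$.

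For the persistence of \SCD regularity in a neighborhood of $(\xb,\yb)$ and the $\limsup$ inequality, I would invoke \cite[Proposition 4.8]{GfrOut22a}. Outer semicontinuity of $\Sp^*F$ along $\gph F$ at $(\xb,\yb)$ is built into the definition through the outer set limit, so for any sequence $(x_k,y_k)\longsetto{\gph F}(\xb,\yb)$ and any $L_k\in\Sp^*F(x_k,y_k)$ converging in $\Z_n$ to some $L$, one has $L\in\Sp^*F(\xb,\yb)\subset\Z_n^{\rm reg}$. Since $\Z_n^{\rm reg}$ is open and $\Sp^*F(\xb,\yb)$ is compactly contained in it, a neighborhood argument yields $\Sp^*F(x,y)\subset\Z_n^{\rm reg}$ whenever $(x,y)\in\gph F$ is sufficiently close to $(\xb,\yb)$, i.e., \SCD regularity propagates. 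Combining this with the continuity of $L\mapsto\norm{C_L}$ from the previous step and the first-part equality (now applied at $(x,y)$) delivers $\limsup_{(x,y)\longsetto{\gph F}(\xb,\yb)}{\rm scd\,reg\;}F(x,y)\leq{\rm scd\,reg\;}F(\xb,\yb)$.

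The main obstacle is not any heavy new argument but a careful bookkeeping check that the three cited pieces of \cite{GfrOut22a} do indeed provide (a) the characterization of the pointwise sup via $\norm{C_L}$, (b) continuity of $C_{\cdot}$ on $\Z_n^{\rm reg}$ together with compactness of $\Sp^*F(\xb,\yb)$, and (c) outer semicontinuity of $\Sp^*F$ plus openness of $\Z_n^{\rm reg}$; once these are in hand, the statement is an immediate consequence of a standard upper-semicontinuity-of-max argument on a compact-valued mapping into a metric space.
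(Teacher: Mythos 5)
Your proposal is correct and follows essentially the same route as the paper, which merely states that the lemma is obtained by combining Equation (34), Lemma 4.7, and Proposition 4.8 of the cited reference without supplying any details. You have correctly identified what each of the three cited pieces must contribute — the $\norm{C_L}$-characterization of the modulus, continuity of $L\mapsto C_L$ on the open set $\Z_n^{\rm reg}$, and the osc/compactness argument yielding persistence of regularity and the $\limsup$ bound — and your supporting observations (compactness of the Grassmannian $\Z_n$, closedness of $\Sp^*F(\xb,\yb)$ as a Painlev\'e--Kuratowski outer limit, openness of $\Z_n^{\rm reg}$, outer semicontinuity of $\Sp^*F$ via the diagonal argument inherent in the osc-hull construction) are all accurate and suffice to assemble the conclusion.
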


\section{On semismooth* Newton methods for SCD mappings}

In this section we recall the general framework for the \ssstar Newton method introduced in \cite{GfrOut21} and adapted to \SCD mappings in \cite{GfrOut22a}.
Consider the inclusion
\begin{equation}\label{EqIncl}
  0\in F(x),
\end{equation}
where $F:\R^n\tto\R^n$ is a mapping having the \SCD property around some point $(\xb,0)\in\gph F$.

The following notion relaxes the \ssstar property from Definition \ref{Defssstar}.
\begin{definition}
We say that $F:\R^n\tto\R^n$ is {\em\SCD \ssstar}  at $(\xb,\yb)\in\gph F$ if $F$ has the \SCD property around $(\xb,\yb)$ and
for every $\epsilon>0$ there is some $\delta>0$ such that the inequality
\begin{align*}
\vert \skalp{x^*,x-\xb}-\skalp{y^*,y-\yb}\vert&\leq \epsilon
\norm{(x,y)-(\xb,\yb)}\norm{(x^*,y^*)}
\end{align*}
holds for all $(x,y)\in \gph F\cap \B_\delta(\xb,\yb)$ and all $(y^*,x^*)$ belonging to any $L\in\Sp^*F(x,y)$.
\end{definition}
 Clearly, every mapping with the SCD property around  $(\xb,\yb) \in\gph F$
which is \ssstar at $(\xb,\yb)$ is automatically SCD \ssstar at  $(\xb,\yb)$. Therefore, the class of \SCD \ssstar mappings is even richer than the class of \ssstar maps. In particular, it follows from \cite[Theorem 2]{Jou07} that every mapping whose graph is a closed subanalytic set is \SCD \ssstar, cf. \cite{GfrOut22a}.

The following proposition provides the key estimate for the \ssstar Newton method for \SCD mappings.
\begin{proposition}[{\cite[Proposition 5.3]{GfrOut22a}}]\label{PropConvNewton}
  Assume that $F:\R^n\tto\R^n$ is \SCD \ssstar at $(\xb,\yb)\in\gph F$. Then for every  $\epsilon>0$ there is some $\delta>0$ such that the estimate
  \begin{equation*}
  \norm{x-C_L^T(y-\yb)-\xb}\leq \epsilon\sqrt{n(1+\norm{C_L}^2)}\norm{(x,y)-(\xb,\yb)}
  \end{equation*}
  holds for every $(x,y)\in\gph F\cap \B_\delta(\xb,\yb)$ and every $L\in\Sp^*F(x,y)\cap\Z_n^{\rm reg}$.
\end{proposition}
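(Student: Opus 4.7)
The plan is to reduce the SCD semismooth* inequality to a componentwise bound on the vector $w := x - \bar{x} - C_L^T(y-\bar{y})$, then recover the norm bound by summing squares over the standard basis.

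First I would invoke Proposition \ref{PropC_L}: since $L \in \Sp^* F(x,y) \cap \Z_n^{\rm reg}$, we have the parameterization $L = \rge(C_L, I)$, so every pair $(y^*, x^*) \in L$ has the form $(C_L u, u)$ for a unique $u \in \R^n$. The SCD semismooth* inequality applied at $(x,y) \in \gph F \cap \B_\delta(\bar{x},\bar{y})$ therefore specializes, for arbitrary $u \in \R^n$, to
\[
\big|\skalp{u, x - \bar{x}} - \skalp{C_L u, y - \bar{y}}\big| \leq \epsilon\, \norm{(x,y)-(\bar{x},\bar{y})}\, \sqrt{\norm{u}^2 + \norm{C_L u}^2}.
\]
By transposing the second inner product and recognizing the left-hand side as $|\skalp{u, w}|$ with $w = x - \bar{x} - C_L^T(y-\bar{y})$, this becomes
\[
|\skalp{u, w}| \leq \epsilon\, \norm{(x,y)-(\bar{x},\bar{y})}\, \sqrt{\norm{u}^2 + \norm{C_L u}^2}.
\]

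Next I would plug in $u = e_i$ for each standard basis vector $i = 1,\dots,n$. Then $\norm{e_i} = 1$ and $\norm{C_L e_i} \leq \norm{C_L}$, so the radical is bounded by $\sqrt{1 + \norm{C_L}^2}$, and the inequality gives
\[
|w_i|^2 \leq \epsilon^2\, \norm{(x,y)-(\bar{x},\bar{y})}^2\, (1 + \norm{C_L}^2).
\]
Summing over $i$ from $1$ to $n$ yields $\norm{w}^2 \leq n\,\epsilon^2(1+\norm{C_L}^2)\,\norm{(x,y)-(\bar{x},\bar{y})}^2$, and taking square roots produces the claimed estimate. The $\delta$ is exactly the one supplied by the SCD semismooth* property for the chosen $\epsilon$, so nothing further is needed there.

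The only thing to be careful about is that the SCD semismooth* property quantifies over all $L \in \Sp^* F(x,y)$ uniformly (the $\delta$ depends only on $\epsilon$, not on $L$), which is precisely what makes the bound valid for every such $L$ simultaneously. There is no real obstacle here; the argument is a short algebraic manipulation once the key parameterization $L = \rge(C_L,I)$ from Proposition \ref{PropC_L} is used to convert the duality pairing into an identity involving $C_L^T$. The factor $\sqrt{n}$ is the unavoidable price for passing from coordinate-wise bounds to a Euclidean norm bound via Cauchy--Schwarz, and the factor $\sqrt{1+\norm{C_L}^2}$ records the norm of the generating basis vector $(C_L e_i, e_i)$ of $L$.
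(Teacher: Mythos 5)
Your proof is correct, and since the paper itself does not reprove this (it cites \cite[Proposition~5.3]{GfrOut22a}), there is no in-paper argument to compare against; but the presence of the $\sqrt{n}$ factor in the stated bound strongly indicates the cited reference uses exactly the coordinatewise strategy you employ, so your route is almost certainly the same one. One small remark: you could bypass the $\sqrt{n}$ entirely by taking $u=w$ in your inequality $|\skalp{u,w}|\leq\epsilon\norm{(x,y)-(\xb,\yb)}\sqrt{\norm{u}^2+\norm{C_Lu}^2}$, which gives $\norm{w}^2\leq\epsilon\norm{(x,y)-(\xb,\yb)}\norm{w}\sqrt{1+\norm{C_L}^2}$ and hence the sharper estimate $\norm{w}\leq\epsilon\sqrt{1+\norm{C_L}^2}\norm{(x,y)-(\xb,\yb)}$; this is worth noting even though it is not needed to match the (weaker) constant in the proposition.
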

We now describe the \SCD variant of the \ssstar Newton method. Given a solution $\xb\in F^{-1}(0)$ of \eqref{EqIncl} and some positive scalar, we define the mappings $\A_{\eta,\xb}:\R^n\tto\R^n\times\R^n$ and $\New_{\eta,\xb}:\R^n\tto\R^n$ by
\begin{gather*}
  \A_{\eta,\xb}(x):=\{(\hat x,\hat y)\in\gph F\mv \norm{(\hat x,\hat y)-(\xb,0)}\leq \eta\norm{x-\xb}\},\\
  \New_{\eta,\xb}(x):=\{\hat x-C_L^T\hat y\mv (\hat x,\hat y)\in \A_{\eta,\xb}(x), L\in\Sp^*F(\hat x,\hat y)\cap \Z_n^{\rm reg}\}.
\end{gather*}

\begin{proposition}[{\cite[Proposition 4.3]{GfrOutVal21}}]\label{PropSingleStep}
  Assume that $F$ is \SCD \ssstar in $(\xb,0) \in\gph F$ and \SCD regular around $(\xb,0)$ and let $\eta>0$. Then there is some $\bar\delta>0$  such that for every $x\in \B_{\bar\delta}(\xb)$ the mapping $F$ is \SCD regular around every point $(\hat x,\hat y)\in \A_{\eta,\xb}(x)$. Furthermore, for every $\epsilon>0$ there is some $\delta\in(0,\bar\delta]$ such that
  \[\norm{z-\xb}\leq\epsilon\norm{x-\xb}\ \forall x\in \B_\delta(\xb), \forall z\in \New_{\eta,\xb}(x).\]
\end{proposition}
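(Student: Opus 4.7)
The plan is to combine the uniform control of the \SCD regularity modulus from Lemma \ref{LemSCDReg} with the semismoothness${}^*$ estimate from Proposition \ref{PropConvNewton}, all evaluated at the solution point $(\xb,0)$. The proof has two parts matching the two assertions.

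For the first assertion, I invoke Lemma \ref{LemSCDReg}: since $F$ is \SCD regular around $(\xb,0)$, there is an open neighborhood $W$ of $(\xb,0)$ and a constant $M>0$ such that $F$ is \SCD regular around every $(x',y')\in\gph F\cap W$ and
\[
\sup\{\norm{C_L}\mv L\in\Sp^*F(x',y')\}=\scdreg F(x',y')\leq M \quad \forall (x',y')\in \gph F\cap W.
\]
Now I pick $\bar\delta>0$ small enough so that $\B_{\eta\bar\delta}(\xb,0)\subset W$. Then for every $x\in\B_{\bar\delta}(\xb)$ and every $(\hat x,\hat y)\in\A_{\eta,\xb}(x)$ we have $\norm{(\hat x,\hat y)-(\xb,0)}\leq\eta\norm{x-\xb}\leq\eta\bar\delta$, hence $(\hat x,\hat y)\in W$, which gives \SCD regularity of $F$ around $(\hat x,\hat y)$ together with the uniform bound $\norm{C_L}\leq M$ for all $L\in\Sp^*F(\hat x,\hat y)$.

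For the second assertion, fix $\epsilon>0$ and put $\epsilon':=\epsilon/\bigl(\eta\sqrt{n(1+M^2)}\bigr)$. Applying Proposition \ref{PropConvNewton} to $F$ at $(\xb,\yb)=(\xb,0)$ with tolerance $\epsilon'$ yields some $\delta'>0$ such that
\[
\norm{\hat x-C_L^T\hat y-\xb}\leq \epsilon'\sqrt{n(1+\norm{C_L}^2)}\,\norm{(\hat x,\hat y)-(\xb,0)}
\]
for all $(\hat x,\hat y)\in\gph F\cap \B_{\delta'}(\xb,0)$ and all $L\in\Sp^*F(\hat x,\hat y)\cap \Z_n^{\rm reg}$. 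Now choose $\delta\in(0,\bar\delta]$ with $\eta\delta\leq\delta'$. For any $x\in\B_\delta(\xb)$ and any $z\in\New_{\eta,\xb}(x)$, write $z=\hat x-C_L^T\hat y$ with $(\hat x,\hat y)\in\A_{\eta,\xb}(x)$ and $L\in\Sp^*F(\hat x,\hat y)\cap\Z_n^{\rm reg}$. Then $\norm{(\hat x,\hat y)-(\xb,0)}\leq\eta\norm{x-\xb}\leq\delta'$ so the estimate above applies; combining it with $\norm{C_L}\leq M$ and the definition of $\A_{\eta,\xb}(x)$ gives
\[
\norm{z-\xb}\leq \epsilon'\sqrt{n(1+M^2)}\cdot \eta\norm{x-\xb}=\epsilon\norm{x-\xb},
\]
as required.

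The main conceptual step, and the potential obstacle, is ensuring that the constant $\sqrt{n(1+\norm{C_L}^2)}$ appearing in the semismoothness${}^*$ bound is \emph{uniformly} bounded as $(\hat x,\hat y)$ ranges over $\A_{\eta,\xb}(x)$ and $L$ over $\Sp^*F(\hat x,\hat y)\cap\Z_n^{\rm reg}$; without this uniform control, $\epsilon'$ cannot be chosen independently of the selection of $(\hat x,\hat y)$ and $L$. The outer semicontinuity of $\scdreg F(\cdot)$ provided by Lemma \ref{LemSCDReg} is precisely what resolves this issue, so the entire argument amounts to composing that uniform bound with the single-point semismoothness${}^*$ estimate and then nesting the radii $\delta\leq\bar\delta$ and $\eta\delta\leq\delta'$ appropriately.
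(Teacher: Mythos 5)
Your proof is correct. Since the paper merely cites \cite[Proposition 4.3]{GfrOutVal21} without reproducing a proof, there is nothing in the present text to compare against directly, but your argument is exactly the natural (and presumably intended) one: use the outer-semicontinuity of the modulus $\scdreg F$ from Lemma \ref{LemSCDReg} to get a neighborhood $W$ of $(\xb,0)$ on which $F$ is \SCD regular with a uniform bound $\norm{C_L}\leq M$, scale $\bar\delta$ so that $\A_{\eta,\xb}(x)\subset W$ for $x\in\B_{\bar\delta}(\xb)$, and then plug this uniform bound into the pointwise estimate of Proposition \ref{PropConvNewton} at $(\xb,0)$ after nesting $\eta\delta\leq\delta'$. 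The bookkeeping of constants ($\epsilon'=\epsilon/(\eta\sqrt{n(1+M^2)})$) is right, and you correctly identified the one genuine hurdle --- making the factor $\sqrt{n(1+\norm{C_L}^2)}$ uniform over all choices of $(\hat x,\hat y)$ and $L$ --- and resolved it with the $\limsup$ part of Lemma \ref{LemSCDReg}.
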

Assuming that we are given some iterate $x^{(k)}$, the next iterate is given formally by $\ee x{k+1}\in \New_{\eta,\xb}(\ee xk)$. Let us take a closer look at this rule. Since we are dealing with set-valued mappings $F$, we cannot expect, in general, that $F(x^{(k)})\not=\emptyset$ or that $0$ is close to
$F(x^{(k)})$, even if $x^{(k)}$ is close to a solution $\xb$. Therefore, we first perform some step that produces $(\hat x^{(k)},\hat y^{(k)})\in\gph F$ as an approximate projection of $(x^{(k)},0)$ onto $\gph F$. We require that
\begin{equation}\label{EqBndApprStep}
\norm{(\hat x^{(k)},\hat y^{(k)})-(\xb,0)}\leq \eta\norm{x^{(k)}-\xb}
\end{equation}
for some constant $\eta>0$, i.e., $(\ee{\hat x}k,\ee{\hat y}k)\in\A_{\eta,\xb}(\ee xk)$. For instance, if
\[\norm{(\hat x^{(k)},\hat y^{(k)})-(x^{(k)},0)}\leq \beta\dist{(x^{(k)},0),\gph F} \] is true with some $\beta\geq 1$, then
\begin{align*}\norm{(\hat x^{(k)},\hat y^{(k)})-(\xb,0)}&\leq \norm{(\hat x^{(k)},\hat y^{(k)})-(x^{(k)},0)}+\norm{(x^{(k)},0)-(\xb,0)}\\
&\leq  \beta\dist{(x^{(k)},0),\gph F}+\norm{(x^{(k)},0)-(\xb,0)}\leq (\beta+1)\norm{(x^{(k)},0)-(\xb,0)}
\end{align*}
and thus \eqref{EqBndApprStep} holds with $\eta=\beta+1$ and we can fulfill the inequality \eqref{EqBndApprStep} without knowing the solution $\xb$. Furthermore, we require that $\Sp^*F(\hat x^{(k)},\hat y^{(k)})\cap\Z_n^{\rm reg}\not=\emptyset$ and compute the new iterate as $x^{(k+1)}=\hat x^{(k)}-C_L^T\hat y^{(k)}$ for some $L\in \Sp^*F(\hat x^{(k)},\hat y^{(k)})\cap\Z_n^{\rm reg}$. In fact, in our numerical implementation we will not compute the matrix $C_L$, but two $n\times n$ matrices $A,B$ such that $L=\rge(B^T,A^T)$. The next iterate $x^{(k+1)}$ is then obtained by  $x^{(k+1)}=\hat x^{(k)}+\Delta x^{(k)},$ where  $\Delta x^{(k)}$ is a solution of the system $A\Delta x=-B\hat y^{(k)}$. Alternatively, in view of Proposition \ref{PropC_L}, we can also choose a subspace $L\in \Sp F(\hat x^{(k)},\hat y^{(k)})\cap\Z_n^{\rm reg}$ and compute the Newton direction as $\ee{\Delta x}k=-C_L\ee{\hat y}k$, that is, given $(A,B)\in \M(L)$ we have
$\ee{\Delta x}k=-Ap$ where $p$ solves $Bp=\ee yk$.

This leads to the following conceptual algorithm.
\begin{algorithm}[\SCD \ssstar Newton-type method for inclusions]\label{AlgNewton}\mbox{ }\\
 1. Choose a starting point $x^{(0)}$, set the iteration counter $k:=0$.\\
 2. If ~ $0\in F(x^{(k)})$, stop the algorithm.\\
  3. \begin{minipage}[t]{\myAlgBox} {\bf Approximation step: } Compute
  $$(\hat x^{(k)},\hat y^{(k)})\in\gph F$$ satisfying \eqref{EqBndApprStep} and such that $\Sp^*F(\hat x^{(k)},\hat y^{(k)})\cap\Z_n^{\rm reg}\not=\emptyset$.\end{minipage}\\
  4. \begin{minipage}[t]{\myAlgBox} {\bf Newton step: }Compute the Newton direction $\ee{\Delta x}k$ by one of the following two alternatives:
  \begin{enumerate}
  \item[a)]
  Select $n\times n$ matrices $A^{(k)},B^{(k)}$ with
  $$L^{(k)}:=\rge\big({B^{(k)}}^T,{A^{(k)}}^T)\in \Sp^*F(\hat x^{(k)},\hat y^{(k)})\cap\Z_n^{\rm reg}$$ and calculate the Newton direction $\Delta x^{(k)}$ as a solution of the linear system $$A^{(k)}\Delta x=-B^{(k)}\hat y^{(k)}.$$
  \item[b)]Select $n\times n$ matrices $A^{(k)},B^{(k)}$ with
$$L^{(k)}:= \rge\big({A^{(k)}},{B^{(k)}})\in \Sp F(\hat x^{(k)},\hat y^{(k)})\cap\Z_n^{\rm reg},$$ compute a solution $p$ of the linear system
$${B^{(k)}}p =-\hat y^{(k)}$$  and obtain the  Newton direction as $\Delta x^{(k)}=A^{(k)}p$.
  \end{enumerate}
   Compute the new iterate via $x^{(k+1)}=\hat x^{(k)}+\Delta x^{(k)}.$\end{minipage}\\
  \strut5. Set $k:=k+1$ and go to 2.
\end{algorithm}
For the choice between the two approaches to calculate the Newton direction, it is important to consider whether an element from $\Sp^*F(\hat x^{(k)} ,\hat y^{(k)})$ or from $\Sp F(\hat x^{(k)} ,\hat y^{(k)})$ is easier to compute.

For this algorithm, locally superlinear convergence follows from Proposition \ref{PropSingleStep}, see also \cite[Corollary 5.6]{GfrOut22a}.
\begin{theorem}\label{ThConvSSNewton}
 Assume that $F$ is \SCD \ssstar at $(\xb,0) \in\gph F$ and \SCD regular around $(\xb,0)$. Then for every $\eta>0$ there is a neighborhood $U$ of $\xb$ such that
 for every starting point $x^{(0)}\in U$ Algorithm \ref{AlgNewton} is well defined and stops after finitely many iterations at a solution of \eqref{EqIncl} or produces a sequence $x^{(k)}$ that superlinearly converges to $\xb$ for any choice of $(\hat x^{(k)} ,\hat y^{(k)})$ satisfying \eqref{EqBndApprStep} and any $L^{(k)}\in\Sp^*F(\hat x^{(k)} ,\hat y^{(k)})$ in Step 4.a) and any $L^{(k)}\in\Sp F(\hat x^{(k)} ,\hat y^{(k)})$ in Step 4.b).
\end{theorem}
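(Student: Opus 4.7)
The plan is to reduce everything to Proposition~\ref{PropSingleStep}. Applied with the given $\eta>0$, it furnishes a radius $\bar\delta>0$ and, for each prescribed contraction rate $\epsilon>0$, a smaller radius $\delta(\epsilon)\in(0,\bar\delta]$ such that $F$ is \SCD regular around every $(\hat x,\hat y)\in\A_{\eta,\xb}(x)$ whenever $x\in\B_{\bar\delta}(\xb)$, and in addition $\|z-\xb\|\le\epsilon\|x-\xb\|$ for all $x\in\B_{\delta(\epsilon)}(\xb)$ and all $z\in\New_{\eta,\xb}(x)$. The neighborhood promised by the theorem will be $U:=\B_{\delta(1/2)}(\xb)$; showing that the iterates stay in $U$, contract, and asymptotically contract arbitrarily fast then closes the proof.

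The main point to verify is that both Newton variants 4.a) and 4.b) actually produce a point $x^{(k+1)}\in\New_{\eta,\xb}(x^{(k)})$, so that Proposition~\ref{PropSingleStep} is applicable. In variant 4.a), the matrices $A^{(k)},B^{(k)}$ are chosen so that $L^{(k)}=\rge({B^{(k)}}^T,{A^{(k)}}^T)\in\Sp^*F(\hat x^{(k)},\hat y^{(k)})\cap\Z_n^{\rm reg}$; by Proposition~\ref{PropC_L} this gives $C_{L^{(k)}}={B^{(k)}}^T({A^{(k)}}^T)^{-1}$, and the solution of $A^{(k)}\Delta x=-B^{(k)}\hat y^{(k)}$ is $\Delta x^{(k)}=-{A^{(k)}}^{-1}B^{(k)}\hat y^{(k)}=-C_{L^{(k)}}^T\hat y^{(k)}$, as required. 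In variant 4.b), $L^{(k)}=\rge(A^{(k)},B^{(k)})\in\Sp F(\hat x^{(k)},\hat y^{(k)})\cap\Z_n^{\rm reg}$ yields $\Delta x^{(k)}=-A^{(k)}(B^{(k)})^{-1}\hat y^{(k)}=-C_{L^{(k)}}\hat y^{(k)}$; by \eqref{EqSp*Sp} the dual $(L^{(k)})^*$ lies in $\Sp^*F(\hat x^{(k)},\hat y^{(k)})\cap\Z_n^{\rm reg}$, and Proposition~\ref{PropC_L} gives $C_{(L^{(k)})^*}=C_{L^{(k)}}^T$, so $\Delta x^{(k)}=-C_{(L^{(k)})^*}^T\hat y^{(k)}$ and again $x^{(k+1)}\in\New_{\eta,\xb}(x^{(k)})$.

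With this identification in hand, the iteration proceeds by a routine induction. For $x^{(0)}\in U$, any $(\hat x^{(0)},\hat y^{(0)})\in\A_{\eta,\xb}(x^{(0)})$ allowed by the approximation step lies in the neighborhood on which $F$ is \SCD regular (because $\delta(1/2)\le\bar\delta$); hence $\Sp^*F(\hat x^{(0)},\hat y^{(0)})\cap\Z_n^{\rm reg}=\Sp^*F(\hat x^{(0)},\hat y^{(0)})\ne\emptyset$ and, via $L\mapsto L^*$, also $\Sp F(\hat x^{(0)},\hat y^{(0)})\cap\Z_n^{\rm reg}\ne\emptyset$, so both variants of step~4 are executable. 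Proposition~\ref{PropSingleStep} with $\epsilon=1/2$ now yields $\|x^{(1)}-\xb\|\le(1/2)\|x^{(0)}-\xb\|<\delta(1/2)$, so $x^{(1)}\in U$ and the induction goes through. Either step~2 terminates the algorithm at some finite iteration, or $x^{(k)}\ne\xb$ for every $k$ (otherwise $0\in F(\xb)$ and step~2 would have stopped) and $\|x^{(k)}-\xb\|\to 0$. For the superlinear rate, fix an arbitrary $\epsilon>0$: there is a $K$ with $\|x^{(k)}-\xb\|\le\delta(\epsilon)$ for all $k\ge K$, whence $\|x^{(k+1)}-\xb\|\le\epsilon\|x^{(k)}-\xb\|$; dividing and letting $\epsilon\downarrow 0$ yields $\|x^{(k+1)}-\xb\|/\|x^{(k)}-\xb\|\to 0$.

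The only real obstacle is the bookkeeping in paragraph two: Proposition~\ref{PropSingleStep} is formulated through $\Sp^*F$ and $C_L^T$, whereas variant 4.b) operates through $\Sp F$ and $C_L$. The duality \eqref{EqSp*Sp} together with the identity $C_{L^*}=C_L^T$ from Proposition~\ref{PropC_L} bridges the two without any new estimates, after which everything reduces to iterating the single-step contraction.
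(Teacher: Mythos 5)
Your proof is correct and fills in precisely the argument the paper defers to Proposition~\ref{PropSingleStep} (and to \cite[Corollary 5.6]{GfrOut22a}): identify both Newton variants 4.a) and 4.b) with a selection from $\New_{\eta,\xb}$ via Proposition~\ref{PropC_L} and the duality $C_{L^*}=C_L^T$, then iterate the single-step contraction and let $\epsilon\downarrow 0$ to obtain the superlinear rate. The algebraic bookkeeping (in particular $\Delta x^{(k)}=-(A^{(k)})^{-1}B^{(k)}\hat y^{(k)}=-C_{L^{(k)}}^T\hat y^{(k)}$ in case 4.a), and $\Delta x^{(k)}=-C_{L^{(k)}}\hat y^{(k)}=-C_{(L^{(k)})^*}^T\hat y^{(k)}$ in case 4.b)) checks out, and well-definedness of step 4 follows exactly as you argue from $F$ being \SCD regular around every $(\hat x,\hat y)\in\A_{\eta,\xb}(x)$ for $x\in\B_{\bar\delta}(\xb)$.
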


In particular, if $F$ is strongly metrically regular around $(\xb,0)$, then $F$ has the \SCD property around $(\xb,0)$ by \cite[Corollary 3.19]{GfrOut22a} and it is also \SCD regular around $(\xb,0)$ as pointed out in the previous section. Therefore, if $F$ also happens to be SCD \ssstar around $(\xb,0)$, then the assumptions of the above statement are fulfilled.

Note that for an implementation of the Newton step, we need not know the whole derivative $\Sp^*F(\hat x^{(k)},\hat y^{(k)})$ (or $\Sp F(\hat x^{(k)},\hat y^{(k)})$) but only one element $L^{(k)}$.

\section{On the implementation of the SCD semismooth* Newton method}

When trying to implement the \SCD \ssstar Newton method directly for \eqref{EqGE}, it turns out that it can be rather difficult to perform the approximation step. Hence, we consider another equivalent approach which is more flexible. Consider an equivalent reformulation of \eqref{EqGE} by the (decoupled) GE
\begin{equation}\label{EqGEMod}
  0\in G(x,d)=\myvec{f(x)+Q(d)\\x-d}
\end{equation}
in variables $(x,d)\in \R^n\times\R^n$. Obviously, $0\in H(\xb)$ holds if and only if $(0,0)\in G(\xb,\xb)$.

The new variable $d$ acts only as an auxiliary variable. The approximation step now reads as follows: Given $\ee xk$ close to a solution $\xb$ (and arbitrary $\ee dk$, for example, $\ee dk=\ee xk$), set $\ee{\hat x}k:=\ee xk$ and find a point $\ee{\hat d}k$ close to $\ee xk$ such that
$\dist{0,f(\ee xk)+Q(\ee{\hat d}k)}$ is small. An approach to solving this problem could be to rewrite GE \eqref{EqGE} in fixed point form $x\in T(x)$ and select $\ee{\hat d}k\in T(\ee xk)$.
For example, for any $\lambda>0$ there is
\begin{equation}\label{EqFixPoint}\eqref{EqGE}\ \Leftrightarrow\ x-\lambda f(x)\in (I+\lambda Q)(x)\ \Leftrightarrow\ x\in(I+\lambda Q)^{-1}\big(x-\lambda f(x)\big).\end{equation}
If we choose $\ee{\hat d}k\in (I+\lambda Q)^{-1}\big(\ee xk-\lambda f(\ee xk)\big)$, we have $\ee xk-\lambda f(\ee xk)\in \ee{\hat d}k+\lambda Q(\ee{\hat d} k)$ and
\begin{equation}\label{EqResGen}\ee{\hat y}k:=\big(\frac 1\lambda(\ee xk-\ee{\hat d}k),\ee xk-\ee{\hat d}k\big)\in G(\ee xk,\ee{\hat d}k)\end{equation}
follows. In order to show that this approach is feasible as an approximation step, we have to verify that a bound of the form \eqref{EqBndApprStep} holds, at least for $\ee xk$ close to $\xb$.
\begin{proposition}\label{PropApprStepResolvent}Let $\xb$ be a solution of \eqref{EqGE} and assume that there is some $\lambda>0$ such that the resolvent $(I+\lambda Q)^{-1}$ has a single-valued Lipschitz continuous localization $S$ around $\xb-\lambda f(\xb)$ for $\xb$, i.e., there are neighborhoods $V$ of $\xb-\lambda f(\xb)$ and $U$ of $\xb$ such that $S:V\to U$ is Lipschitz continuous and $(I+\lambda Q)^{-1}(z)\cap U =\{S(z)\}$ for $z\in V$. Then there is some $\delta>0$  and some $\eta>0$ such that for every $x\in \B_\delta(\xb)$ there holds $x-\lambda f(x)\in V$ and  the vectors $\hat d:=S(x-\lambda f(x))$, $\hat y:=(\frac 1\lambda (x-\hat d), x-\hat d)\in\gph G(x,\hat d)$ satisfy the estimate
\[\norm{\big((x,\hat d), \hat y\big)-\big((\xb,\xb),0\big)}\leq\eta\norm{x-\xb}.\]
\end{proposition}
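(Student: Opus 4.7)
The plan is to observe that $\hat d$ is obtained by applying the Lipschitz localization $S$ to the point $x-\lambda f(x)$, which depends Lipschitz continuously on $x$; then the proof will just consist of bounding each of the three components on the left-hand side by a constant multiple of $\|x-\xb\|$ and verifying the inclusion $\hat y\in G(x,\hat d)$ unpacks correctly.

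\textbf{First step: set-up.} Since $f$ is continuous and $V$ is a neighborhood of $\xb-\lambda f(\xb)$, the map $x\mapsto x-\lambda f(x)$ sends some ball $\B_{\delta}(\xb)$ into $V$, so $\hat d=S(x-\lambda f(x))$ is well defined on this ball. By possibly shrinking $\delta$, we may also assume that $f$ is Lipschitz on $\B_\delta(\xb)$ with some constant $L_f$, which holds since $f$ is continuously differentiable.

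\textbf{Second step: $\xb$ is the fixed point of the localization.} From $0\in f(\xb)+Q(\xb)$ one gets $\xb-\lambda f(\xb)\in\xb+\lambda Q(\xb)=(I+\lambda Q)(\xb)$, i.e.\ $\xb\in(I+\lambda Q)^{-1}(\xb-\lambda f(\xb))$. Since $\xb\in U$, the single-valued localization property yields $S(\xb-\lambda f(\xb))=\xb$.

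\textbf{Third step: bounding $\hat d-\xb$ and $\hat y$.} With $L_S$ the Lipschitz constant of $S$ on $V$, the previous two steps give
\[
\norm{\hat d-\xb}=\norm{S(x-\lambda f(x))-S(\xb-\lambda f(\xb))}\leq L_S(1+\lambda L_f)\norm{x-\xb},
\]
and then $\norm{x-\hat d}\leq (1+L_S(1+\lambda L_f))\norm{x-\xb}$ by the triangle inequality. Since $\hat y=\big(\frac1\lambda(x-\hat d),x-\hat d\big)$, we get $\norm{\hat y}\leq\sqrt{1+1/\lambda^2}\,\norm{x-\hat d}$, so $\norm{\hat y}$ is also bounded by a constant multiple of $\norm{x-\xb}$.

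\textbf{Fourth step: checking $\hat y\in G(x,\hat d)$ and concluding.} From $\hat d\in(I+\lambda Q)^{-1}(x-\lambda f(x))$ we read off $\frac1\lambda(x-\hat d)-f(x)\in Q(\hat d)$, that is, $\frac1\lambda(x-\hat d)\in f(x)+Q(\hat d)$; the second component $x-\hat d$ is trivially equal to $x-\hat d$, so indeed $\hat y\in G(x,\hat d)$. Combining the three bounds for $\norm{x-\xb}$, $\norm{\hat d-\xb}$ and $\norm{\hat y}$ and taking the square root of the sum of squares yields the claim with
\[
\eta:=\sqrt{1+L_S^2(1+\lambda L_f)^2+(1+1/\lambda^2)(1+L_S(1+\lambda L_f))^2}.
\]
There is no real obstacle here; the only subtle point is being careful that $S$'s Lipschitz constant is taken on $V$ and that the identity $S(\xb-\lambda f(\xb))=\xb$ really follows from the localization property (so that the Lipschitz estimate can be anchored at $\xb$ rather than at an arbitrary point).
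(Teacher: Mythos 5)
Your proposal is correct and follows essentially the same route as the paper's proof: establish $x-\lambda f(x)\in V$ on a small ball, observe $S(\xb-\lambda f(\xb))=\xb$ from the fixed-point characterization, use the Lipschitz moduli of $S$ and $f$ to bound $\norm{\hat d-\xb}$ and hence $\norm{\hat y}$, and verify $\hat y\in G(x,\hat d)$ by unwinding the resolvent inclusion. The only difference is cosmetic: you compute $\eta$ via the exact Euclidean norm of $\hat y$ and a root-sum-of-squares bound, whereas the paper uses the cruder triangle-inequality bound $(2+1/\lambda)(1+L_S(1+\lambda L_f))$.
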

\begin{proof}
  Choose $\delta>0$ such that $\B_\delta(\xb)\subseteq (I-\lambda f)^{-1}(V)$ and let $L_f, L_S>0$ denote the  moduli of Lipschitz continuity of $f$ on $\B_\delta(\xb)$ and of $S$ on $V$, respectively. Consider $x\in \B_\delta(\xb)$. Then, by construction, $x-\lambda f(x)\in V$ and hence $\hat d:=S(x-\lambda f(x))$ is well defined. Further, by \eqref{EqFixPoint}, we have $\xb=S(\xb-\lambda f(\xb))$ implying
  \[\norm{\hat d-\xb}=\norm{S(x-\lambda f(x))-S(\xb-\lambda f(\xb))}\leq L_S\norm{x-\lambda f(x)-(\xb-\lambda f(\xb))}\leq L_S(1+\lambda L_f)\norm{x-\xb}.\]
  In addition we have $x-\lambda f(x)\in \hat d+\lambda Q(\hat d)$ and $\hat y\in\gph G(x,\hat d)$ follows. Since $\norm{\hat y}\leq(1+1/\lambda)\norm{\hat d-x}\leq
  (1+1/\lambda)(\norm{\hat d-\xb}+\norm{x-\xb})$, we obtain
  \begin{align*}\norm{\big((x,\hat d), \hat y\big)-\big((\xb,\xb),0\big)}&\leq \norm{x-\xb}+\norm{\hat d-\xb}+\norm{\hat y}\leq (2+\frac 1\lambda)(\norm{x-\xb}+\norm{\hat d-\xb})\\
  &\leq (2+\frac 1\lambda)(1+L_S(1+\lambda L_f))\norm{x-\xb}\end{align*}
  and the assertion follows.
\end{proof}
In particular, if $Q$ is a maximal hypomonotone mapping, i.e., there exists some $\gamma\geq 0$ such that $\gamma I+Q$ is maximal monotone, then for every $0<\lambda<1/\gamma$ the mapping $(I+\lambda Q)$ is strongly monotone and hence $(I+\lambda Q)^{-1}$ is a single-valued Lipschitz continuous function on $\R^n$, cf. \cite[Proposition 12.54]{RoWe98}. However, hypomonotonicity is only a sufficient condition ensuring  that $(I+\lambda Q)^{-1}$ has this property. In Section \ref{SecAlgContactProbl} we will encounter a non-hypomonotone mapping $\tilde Q$, such that $(I+\lambda\tilde Q)^{-1}$ is single-valued and Lipschitz continuous for every $\lambda>0$.

Note that the choice $\hat d\in (I+\lambda Q)^{-1}\big(x-\lambda f(x)\big)$ corresponds to one step of the so-called {\em Forward-Backward method} for solving \eqref{EqGE}.

In the next proposition, we summarize some properties of $G$.
\begin{proposition}\label{PropG}
\begin{enumerate}
  \item[(i)] For every $x\in\R^n$ and $(d,z)\in\gph Q$ we have
  \begin{align*}\Sp G\big((x,d),(f(x)+z,x-d)\big)&=\left\{\rge\left[\left(\begin{matrix}I&0\\0&X\end{matrix}\right), \left(\begin{matrix}\nabla f(x)&Y\\I&-X\end{matrix}\right)\right]\bbmv \rge(X,Y)\in\Sp Q(d,z)\right\},\\
  \Sp^* G\big((x,d),(f(x)+z,x-d)\big)
    &=\left\{\rge\left[\left(\begin{matrix}Y^*&0\\0&I\end{matrix}\right), \left(\begin{matrix}\nabla f(x)^TY^*&I\\X^*&-I\end{matrix}\right)\right]\bbmv \rge(Y^*,X^*)\in\Sp^* Q(d,z)\right\}.
  \end{align*}
  \item[(ii)] Let $x\in\R^n$ and assume that $Q$ has the \SCD property around $(d,z)\in \gph Q$. Then the following statements are equivalent:
  \begin{enumerate}
    \item[(a)] $G$ is \SCD regular around $\big((x,d),( f(x)+z,x-d)\big)$.
    \item[(b)] For every $L\in \Sp^* Q(d,z)$  and every  $(X,Y)\in\M(L)$ the matrix $\nabla f(x)X+Y$ is nonsingular.
    \item[(c)]  For every $L\in \Sp^* Q(d,z)$  and every  $(Y^*,X^*)\in\M(L)$ the matrix $\nabla f(x)^TY^*+X^*$ is nonsingular.
  \end{enumerate}
  \item[(iii)] The mapping $H$ is \SCD regular around $(\xb,0)$ if and only if $G$ is \SCD regular around $\big((\xb,\xb),(0,0))$.
\end{enumerate}
\end{proposition}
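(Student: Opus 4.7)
The plan is to derive the three parts sequentially, using the calculus rules of Section~3 to reduce (i) and (ii) to block matrix computations, and then to obtain (iii) by applying the sum rule one more time.

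For (i), I would decompose $G$ as $G = \tilde h + \tilde Q$, where $\tilde h(x,d):=(f(x),\,x-d)$ is $C^1$ with Jacobian $\left(\begin{matrix}\nabla f(x)&0\\ I & -I\end{matrix}\right)$, and $\tilde Q(x,d):=Q(d)\times\{0\}$. Modulo a permutation of coordinates, $\tilde Q$ is the Cartesian product of $Q$ with the zero mapping $F_2:x\mapsto\{0\}$, whose graph $\R^n\times\{0\}$ is a subspace; hence $F_2$ is trivially an \SCD mapping, geometrically derivable everywhere, with $\Sp F_2$ and $\Sp^* F_2$ both equal to $\{\rge(I_n,0)\}$. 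Proposition~\ref{PropSCDProduct} then yields explicit bases for $\Sp\tilde Q$ and $\Sp^*\tilde Q$ in terms of bases of $\Sp Q(d,z)$ and $\Sp^* Q(d,z)$ respectively, and Proposition~\ref{PropSum} applied with $\tilde h$ produces the claimed $4n\times 2n$ basis matrices for $\Sp G$ and $\Sp^* G$ via a single block matrix multiplication.

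For (ii), recall from Proposition~\ref{PropC_L} that $L\in\Z_{2n}^{\rm reg}$ iff the ``output block'' $B$ in any basis $(A,B)\in\M(L)$ is nonsingular. Inserting the formulas from (i), the candidate blocks become $\left(\begin{matrix}\nabla f(x)&Y\\ I & -X\end{matrix}\right)$ in the $\Sp G$ case and $\left(\begin{matrix}\nabla f(x)^T Y^* & I\\ X^* & -I\end{matrix}\right)$ in the $\Sp^* G$ case. A single block row reduction (using $v_1=Xv_2$ in the first case and $t=X^*s$ in the second) shows that these are nonsingular iff $\nabla f(x)X+Y$, respectively $\nabla f(x)^T Y^*+X^*$, is nonsingular, giving the equivalences (a)$\Leftrightarrow$(b)$\Leftrightarrow$(c).

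For (iii), applying the sum rule to $H=f+Q$ gives that every $L\in\Sp^* H(\xb,0)$ has the basis representation $\rge\bigl(Y^*,\,\nabla f(\xb)^T Y^* + X^*\bigr)$ for some $\rge(Y^*,X^*)\in\Sp^* Q(\xb,-f(\xb))$. Writing $(y^*,0)\in L$ in terms of the parameter $s$, and exploiting the full column rank of $(Y^*,X^*)$, I would show that the regularity condition for $H$ is equivalent to nonsingularity of $\nabla f(\xb)^T Y^* + X^*$ for every basis of every such $L_Q^*$, which is exactly condition (ii)(c) for $G$ at $\bigl((\xb,\xb),(0,0)\bigr)$. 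The ``SCD property around'' parts of the two conditions coincide as well, since both reduce through the relevant sum rule (and, for $G$, through the formula in (i)) to $Q$ having the SCD property around $(\xb,-f(\xb))$.

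The main obstacle I anticipate is the bookkeeping in (i): writing $\tilde Q$ as a Cartesian product requires permuting input and output coordinates so that $(d,z)$ and $(x,0)$ are paired, and one has to track carefully how the duality $L\mapsto L^*$ interacts with this permutation and with the subsequent action of the Jacobian of $\tilde h$ in Proposition~\ref{PropSum}, so that the resulting $4n\times 2n$ basis matrices match exactly the block form claimed in the statement.
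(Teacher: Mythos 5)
Your proposal is correct, and parts (ii) and (iii) follow the paper's argument essentially verbatim: the same block factorizations for the nonsingularity equivalences and the same final appeal to the sum rule for $H=f+Q$ (you use $\Sp^* H$ and (ii)(c) where the paper uses $\Sp H$ and (ii)(b), which is of course equivalent). The genuine difference is in how you obtain $\Sp\tilde Q$ and $\Sp^*\tilde Q$ in step (i): you route through the Cartesian--product calculus (Lemma~\ref{LemSCDProduct} / Proposition~\ref{PropSCDProduct}), writing $\tilde Q$, after swapping the two input blocks, as $Q\times F_2$ with $F_2\equiv\{0\}$, and then conjugating back through the permutation. The paper instead computes $\gph D\tilde Q$ and $\OO_{\tilde Q}=\R^n\times\OO_Q\times\{0\}$ directly and writes down the associated $4n\times4n$ projection matrix, which makes $\Sp\tilde Q$ and $\Sp^*\tilde Q$ immediate without any permutation bookkeeping. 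Your approach is more modular, reusing machinery already established, but you pay for it with the coordinate juggling you already anticipate; the paper's direct computation is shorter here precisely because $\tilde Q$ is so structured. One small point worth noting: Proposition~\ref{PropSCDProduct} is stated for SCD mappings $F_i$, whereas part (i) of the statement under proof is asserted for arbitrary $(d,z)\in\gph Q$. What you actually need are the equality formulas for $\Sp$ and $\Sp^*$ of a product, and those follow from Lemma~\ref{LemSCDProduct} (the derivability and dimension side conditions are trivially met by $F_2$) without assuming that $Q$ is SCD; citing the lemma rather than the proposition closes this gap cleanly.
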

\begin{proof}
$G$ has the representation $G(x,d)=h(x,d)+\tilde Q(x,d)$ with
\[h(x,d):=\myvec{f(x)\\x-d}\mbox{ and }\tilde Q(x,d)=\myvec{Q(d)\\0}.\]
Since $\gph D\tilde Q\big((x,d),(z,0)\big)=\{\big((u,e), (v,0)\big)\mv (e,v)\in \gph DQ(d,z)\}$, we obtain $\OO_{\tilde Q}=\R^n\times\OO_Q\times \{0\}$. For every $(d,z)\in\OO_Q$ and every $x\in\R^n$ the orthogonal projection onto $\tilde L:=\gph D\tilde Q\big((x,d),(z,0)\big)=\{\big((u,e), (v,0)\big)\mv (e,v)\in \gph DQ(d,z)\}$ is represented by the matrix
\[P_{\tilde L}=\left(\begin{matrix}I&0&0\\0&P_L&0\\0&0&0\end{matrix}\right),\]
where $P_L$ corresponds to the orthogonal projection onto the subspace $L:=\gph DQ(d,z)$. Hence, for every $(d,z)\in\gph Q$ and every $x\in\R^n$ we obtain
\begin{align*}\Sp\tilde Q\big((x,d), (z,0)\big)&=\{\R^n\times L\times\{0\}\mv L\in \Sp Q(d,z)\}\\
&=\left\{\rge\left[\left(\begin{matrix}I&0\\0&X\end{matrix}\right), \left(\begin{matrix}0&Y\\0&0\end{matrix}\right)
\right]\bbmv \rge(X,Y)\in\Sp Q(d,z)
\right\}\end{align*}
and from Proposition \ref{PropSum} we conclude
\begin{align*}\Sp G\big((x,d),(f(x)+z,x-d)\big)&=\left\{\rge\left[\left(\begin{matrix}I&0&0&0\\0&I&0&0\\\nabla f(x)&0&I&0\\I&-I&0&I\end{matrix}\right)\left(\begin{matrix}I&0\\0&X\\0&Y\\0&0\end{matrix}\right)\right]\bbmv \rge(X,Y)\in\Sp Q(d,z)\right\}\\
&=\left\{\rge\left[\left(\begin{matrix}I&0\\0&X\end{matrix}\right), \left(\begin{matrix}\nabla f(x)&Y\\I&-X\end{matrix}\right)\right]\bbmv \rge(X,Y)\in\Sp Q(d,z)\right\}.\end{align*}
Similarly, we have
\begin{align*}\Sp^*\tilde Q\big((x,d),(z,0)\big)&=\left\{\rge\left[\left(\begin{matrix}Y^*&0\\0&I\end{matrix}\right), \left(\begin{matrix}0&0\\X^*&0\end{matrix}\right)\right]
\bbmv \rge(Y^*,X^*)\in\Sp^* Q(d,z)\right\},
\end{align*}
yielding, together with Proposition \ref{PropSum},
\begin{align*}\lefteqn{\Sp^* G\big((x,d),(f(x)+z,x-d)\big)}\\
&=\left\{\rge\left[\left(\begin{matrix}I&0&0&0\\0&I&0&0\\\nabla f(x)^T&I&I&0\\0&-I&0&I\end{matrix}\right)\left(\begin{matrix}Y^*&0\\0&I\\0&0\\X^*&0\end{matrix}\right)\right]\bbmv \rge(Y^*,X^*)\in\Sp^* Q(d,z)\right\}\\
&=\left\{\rge\left[\left(\begin{matrix}Y^*&0\\0&I\end{matrix}\right), \left(\begin{matrix}\nabla f(x)^TY^*&I\\X^*&-I\end{matrix}\right)\right]\bbmv \rge(Y^*,X^*)\in\Sp^* Q(d,z)\right\}.\end{align*}

By virtue of (i) and the definition of \SCD regularity, $G$ is \SCD regular around $\big((x,d),(f(x)+z,x-d)\big)$ if and only if for every pair $X,Y$ with $\rge(X,Y)\in \Sp Q(d,z)$ the matrix
\[\left(\begin{matrix}\nabla f(x)&Y\\I&-X\end{matrix}\right)=\left(\begin{matrix}\nabla f(x)&\nabla f(x)X+Y\\I&0\end{matrix}\right)\left(\begin{matrix}I&-X\\0&I\end{matrix}\right)\]
is nonsingular. By the representation above, this holds if and only if $\nabla f(x)X+Y$ is nonsingular. Thus the equivalence between (a) and (b) is established. Similarly, $G$ is \SCD regular around $\big((x,d),(f(x)+z,x-d)\big)$ if and only if for every pair $Y^*,X^*$ with $\rge(Y^*,X^*)\in \Sp^* Q(d,z)$ the matrix
\[\left(\begin{matrix}\nabla f(x)^TY^*&I\\X^*&-I\end{matrix}\right)= \left(\begin{matrix}I&-I\\0&I\end{matrix}\right)\left(\begin{matrix}\nabla f(x)^TY^*+X^*&0\\X^*&-I\end{matrix}\right)\]
is nonsingular and the equivalence between (a) and (c) follows.

To establish (iii), just note that by Proposition \ref{PropSum} we have
\[\Sp H(\xb,0)=\left\{\rge\left[\left(\begin{matrix}I&0\\\nabla f(\xb)&I\end{matrix}\right)\myvec{X\\Y}\right]=\rge(X,\nabla f(x)X+Y)\bmv \rge(X,Y)\in\Sp Q(\xb,-f(\xb))\right\}\]
and the assertion follows from (ii) and the definition of \SCD regularity.
\end{proof}

Let us now consider the Newton step. Assume that, emanating from the iterate $\ee xk$, we have computed $\big((\ee{\hat x}k,\ee{\hat d}k),(\ee{\hat y_1}k, \ee{\hat y_2}k)\big)\in\gph G$ as the result of the approximation step.

{\bf Case (i):} We compute the Newton direction according to step 4.a) of Algorithm \ref{AlgNewton}.\\ By Proposition \ref{PropG}, we have to compute two $n\times n$ matrices $\ee {{Y^*}}k, \ee {X^*}k$ with $\rge(\ee {Y^*}k, \ee {X^*}k)\in\Sp^*Q(\ee{\hat d}k,\ee{\hat y_1}k-\nabla f(\ee{\hat x}k))$ and to solve the system
\[\left(\begin{matrix}\nabla f(\ee{\hat x}k)^T\ee{Y^*}k&I\\\ee{X^*}k&-I\end{matrix}\right)^T\myvec{\ee{\Delta x}k\\\ee{\Delta d}k}=\left(\begin{matrix}{\ee{Y^*}k}^T\nabla f(\ee{\hat x}k)&{\ee{X^*}k}^T\\I&-I\end{matrix}\right)\myvec{\ee{\Delta x}k\\\ee{\Delta d}k}=-\left(\begin{matrix}\ee{Y^*}k&0\\0&I\end{matrix}\right)^T\myvec{\ee {\hat y_1}k\\\ee {\hat y_2}k}.\]
Using the second equation we can eliminate $\ee{\Delta d}k=\ee{\Delta x}k+\ee{\hat y_2}k$ and arrive at the linear system
\begin{equation}\label{EqNewtonDir}\left({\ee{Y^*}k}^T\nabla f(\ee{\hat x}k)+{\ee{X^*}k}^T\right)\ee{\Delta x}k=-\left({\ee{Y^*}k}^T\ee{\hat y_1}k+{\ee{X^*}k}^T\ee{\hat y_2}k\right).\end{equation}

{\bf Case (ii):} The Newton direction is computed by  step 4.b) of Algorithm \ref{AlgNewton}.\\
 In this case we determine two $n\times n$ matrices $\ee Xk,\ee Yk$ with $\rge(\ee Xk,\ee Yk)\in \Sp Q(\ee{\hat d}k,\ee{\hat y_1}k-\nabla f(\ee{\hat x}k))$, solve the linear system
\[ \left(\begin{matrix}\nabla f(\ee{\hat x}k)&\ee Yk\\I&-\ee Xk\end{matrix}\right)\myvec{p_1\\p_2}=-\myvec{\ee {\hat y_1}k\\\ee {\hat y_2}k}\]
and  set
\[\myvec{\ee{\Delta x}k\\\ee{\Delta d}k}=\left(\begin{matrix}I&0\\0&\ee Xk\end{matrix}\right)\myvec{p_1\\p_2}.\]
By eliminating $p_1=\ee Xk p_2-\ee{\hat y_2}k$ we obtain the linear system
\[\left(\nabla f(\ee{\hat x}k)\ee Xk+ \ee Yk\right)p_2=\nabla f(\ee{\hat x}k)\ee{\hat y_2}k-\ee{\hat y_1}k,\]
 whose solution yields
 \[\myvec{\ee{\Delta x}k\\\ee{\Delta d}k}=\myvec{\ee Xk p_2-\ee{\hat y_2}k\\\ee Xk p_2}.\]

 In both cases, the new iterate is given by $\ee x{k+1}:=\ee{\hat x}k+\ee{\Delta x}k$. Further, we have $\ee {\Delta x}k-\ee{\Delta d}k=-\ee{\hat y_2}k=\ee{\hat d}k-\ee{\hat x}k$ resulting in $\ee x{k+1}=\ee{\hat d}k+\ee{\Delta d}k$.

\section{Algebraic form of the discrete contact problem with Coulomb friction\label{SecAlgContactProbl}}
We consider an elastic body represented by a  bounded domain $\Omega\subset\R^3$ with a sufficiently smooth boundary $\partial\Omega$. The body is made of elastic, homogeneous, and isotropic material. The boundary consists of three non-empty disjoint parts: $\partial\Omega=\overline{\Gamma_u}\cup \overline{\Gamma_p}\cup \overline{\Gamma_c}$. Zero displacements are prescribed on $\Gamma_u$, surface tractions act on $\Gamma_p$, and the body is subject to volume forces. We seek a displacement field and a corresponding stress field satisfying the Lam\'e system of PDEs in $\Omega$, the homogeneous Dirichlet boundary conditions on $\Gamma_u$, and the Neumann boundary conditions on $\Gamma_p$. The body is unilaterally supported along $\Gamma_c$ by some flat rigid foundation given by the halfspace $\R^2\times\R_-$ and the initial gap between the body and the rigid foundation is denoted by $d(x)$, $x\in\Gamma_c$.  In the contact zone, we consider a static Coulomb Friction condition.

This problem can be described by partial differential equations and boundary conditions for the displacements, which we are looking for. We refer the reader to, e.g., \cite{EJ}, where also a weak formulation can be found. We consider here only the discrete algebraic problem, which arises after some suitable finite element approximation.

Let $n$ denote the number of degrees of freedom of the nodal displacement vector and let $p$ denote the number of contact nodes $x_i\in \overline{\Gamma_c}\setminus\overline{\Gamma_u}$. After some suitable reordering of the variables, such that the first $3p$ positions are occupied by the displacements of the nodes lying in the contact part of the boundary, we arrive at the following nodal block structure for an arbitrary vector $y\in\R^n$:
\[y=(y^1,\ldots,y^p,y^R)\quad \mbox{with}\quad y^i\in\R^3,\ i=1,\ldots,p,\ y^R\in\R^{n-3p}.\]
In what follows, $A\in\R^{n\times n}$, $\tilde l\in\R^n$ are the stiffness matrix and the load vector, respectively. Further we are given two matrices $N\in\R^{p\times n}$, $T\in\R^{2p\times n}$, where, for a given displacement vector $v$, $Nv$ yields the normal components at the $p$ contact points, and $Tv=(T^1v,\ldots,T^pv)$, where $T^iv\in\R^2$ is the  tangential nodal displacement vector at the $i$-th contact node. The symbol $\vert Tv\vert \in\R^p$ denotes a
vector defined by
\[\vert Tv\vert=(\norm{T^1v},\ldots,\norm{T^pv}).\]
We denote with $\alpha\in\R^p$ the vector of nodal distances with $\alpha_i:=d(x_i)$ and the friction coefficient is denoted by $\F$.
\begin{definition}[{\cite[Definition 3.6]{BeHaKoKuOut09}}]\label{DefContProbl}
As a solution of a discrete contact problem with Coulomb friction
we declare any couple $(\tilde u, \lambda)\in\R^n\times\R^p_+$ satisfying
\begin{align}
\label{EqDefSol1}  &\skalp{A\tilde u,v-\tilde u}+\F\skalp{\lambda,\vert Tv\vert -\vert T\tilde u\vert}\geq \skalp{\tilde l,v-\tilde u}+\skalp{\lambda, Nv-N\tilde u}\ \forall v\in\R^n,\\
\label{EqDefSol2}  &\skalp{\mu-\lambda, N\tilde u+\alpha}\geq 0,\ \forall \mu\in \R^p_+.
\end{align}
\end{definition}
Since the stiffness matrix $A$ is positive definite and $\lambda\geq0$, condition \eqref{EqDefSol1} is equivalent to the requirement that $\tilde u$ is a minimizer of the convex minimization problem
\begin{equation*}\min_v\frac 12v^T Av-\skalp{\tilde l,v}-\skalp{\lambda, Nv}+\F\skalp{\lambda,\vert Tv\vert}.\end{equation*}
Given a vector $z=(z_1,z_2,z_3)^T\in\R^3$, we denote by $z_{12}:=(z_1,z_2)^T\in\R^2$ the vector formed by the first two components. Using this notation, we have
\[Nv=(v_3^1,\ldots,v_3^p)^T\quad\mbox{and}\quad T^iv=v^i_{12},\ i=1,\ldots,p\]
due to the ordering of the nodal displacements.

Next consider the transformation of variables $u=\tilde u+d$, where $d=(d^1,\ldots,d^p,d^R)^T\in\R^n$ is given by
\[d^i_{12}:=0,\ d^i_3:=\alpha_i,\ i=1,\ldots,p,\quad d^R:=0.\]
Then $u$ is a solution of the problem
\[\min_v\frac 12v^T Av-\skalp{l,v}-\sum_{i=1}^p\lambda_iv^i_3+\sum_{i=1}^p\F\lambda_i\norm{v^i_{12}},\]
where $l:=\tilde l-Ad$. Since the objective in this minimization problem is convex, this is in turn equivalent to the first-order optimality condition
\begin{align*}0&\in (Au-l)^i-\lambda_i(0,0,1)^T+\F\lambda_i\partial\norm{u^i_{12}},\ i=1,\ldots,p,\\
0&=(Au-l)^R.
\end{align*}
Further, \eqref{EqDefSol2} is the same as $-\lambda_i\in N_{\R_+}(\tilde u^i_3+\alpha_i)=N_{\R_+}(u^i_3)$, $i=1,\ldots,p$. After eliminating $\lambda$ from explicit variables, we have thus arrived at the GE
\begin{equation}\label{eq-8}
0 \in H(u):=Au-l+Q(u),
\end{equation}
where
\begin{equation}\label{eq-9}
Q(u)=\prod^{p}_{i=1} \tilde Q(u^{i})\times Q^R(u^R)
\end{equation}
with $\tilde Q:\R^3\tto\R^3$ and $Q^R:\R^{n-3p}\tto\R^{n-3p}$ defined by

\begin{equation}\label{eq-10}
 \tilde Q(v):= \left\{\myvec{-\F\vartheta\partial\norm{v_{12}}\\ \vartheta}\,\bigg|\, \vartheta\in N_{\R_+}(v_3)\right\}\quad \mbox{and}\quad Q^R(v):=\{0\}.
\end{equation}
 GE \eqref{eq-8} is dependent solely on transformed displacements $u$. Multipliers $\lambda_i$ appear only implicitly as $-\vartheta$ in the description of $\tilde Q$. This is a big difference with respect to other approaches, where the semismooth Newton method for equations is applied to mixed primal-dual systems or purely dual systems using some NCP-functions, see, e.g., \cite{Wohl11},\cite{BlFrFrRa16},\cite{OKZ}.

\begin{remark}
  We have derived the GE \eqref{eq-8} for the Signorini problem with static Coulomb friction. We claim also that, for other contact problems with friction involving two elastic bodies, one can derive a GE of the same type. The interested reader is referred to \cite[Section 5.2]{Wohl11} for an algebraic transformation of a two-body problem to a one-body problem.
\end{remark}

Note that
\[\gph \tilde Q=\left\{(v,g,\vartheta)\in\R^3\times\R^2\times\R\mv g\in-\F\vartheta\partial\norm{v_{12}},\ \vartheta\in N_{\R_+}(v_3)\right\},\]
which enables us to prove the following statement.
\begin{proposition}
$H$ is \ssstar at each point in its graph.
\end{proposition}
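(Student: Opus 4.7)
The plan is to show that $\gph H$ is a closed semi-algebraic subset of $\R^n\times\R^n$, from which the claim follows by the known result (Jourani-type, cf.\ \cite[Theorem 2]{Jou07} as invoked earlier in this paper) that every mapping with closed subanalytic graph is \ssstar at every point of its graph. First I would identify $\gph H$ with $\{(u,Au-l+z)\mv (u,z)\in\gph Q\}$, i.e.\ the image of $\gph Q$ under an affine (hence semi-algebraic) map. Therefore $\gph H$ is semi-algebraic as soon as $\gph Q$ is. Since $\gph Q=\prod_{i=1}^p\gph\tilde Q\times\gph Q^R$, and the Cartesian product of semi-algebraic sets is semi-algebraic, and $\gph Q^R=\R^{n-3p}\times\{0\}$ is linear (hence trivially semi-algebraic), the whole matter reduces to checking semi-algebraicity of $\gph\tilde Q$.

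The core step is the semi-algebraicity of
\[
\gph\tilde Q=\bigl\{(v,g,\vartheta)\in\R^3\times\R^2\times\R\bmv g\in-\F\vartheta\,\partial\norm{v_{12}},\ \vartheta\in N_{\R_+}(v_3)\bigr\}.
\]
The normal-cone inclusion $\vartheta\in N_{\R_+}(v_3)$ is the union of the two semi-algebraic pieces $\{v_3\ge 0,\,\vartheta=0\}$ and $\{v_3=0,\,\vartheta\le 0\}$. The subdifferential inclusion $g\in-\F\vartheta\,\partial\norm{v_{12}}$ splits correspondingly: on $\{v_{12}=0\}$ it reduces to $\norm{g}^2\le\F^2\vartheta^2$, while on $\{v_{12}\neq 0\}$ it reads $\norm{v_{12}}\,g=-\F\vartheta\,v_{12}$, which after one squaring becomes a polynomial system in $(v_{12},g,\vartheta)$. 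Closedness of $\gph\tilde Q$ follows from closedness of the graphs of $N_{\R_+}$ and of $\partial\norm{\cdot}$ together with continuity of the scaling $\vartheta\mapsto\F\vartheta$. Hence $\gph H$ is closed and semi-algebraic, in particular closed subanalytic, and the cited result yields that $H$ is \ssstar at every point of its graph.

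The step I would expect to be most delicate is not the semi-algebraicity itself, which is routine, but the precise form of the cited subanalytic result. As quoted in the excerpt, \cite[Theorem 2]{Jou07} is applied to yield \SCD \ssstar for subanalytic graphs, whereas Definition~2.5 here asks for the stronger plain \ssstar property. Fortunately, the same Jourani-type argument also delivers plain \ssstar (this is in fact the form established in the original reference \cite{GfrOut21}), and it is this version that I would invoke. As a back-up route if one prefers to avoid appealing to that reference, one can alternatively decompose the Cartesian product into its blocks and establish \ssstar of $\tilde Q$ at each of its points by a direct case analysis on the disjunctive structure of $\gph\tilde Q$ displayed above, and then use the standard preservation of \ssstar under $C^1$ perturbations to pass from $Q$ to $H$.
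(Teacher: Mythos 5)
Your proposal is correct and arrives at the same key reduction — $\gph\tilde Q$ is subanalytic, so the Jourani result applies — but organizes it differently. The paper first passes from $H$ to $Q$ via \cite[Proposition 3.6]{GfrOut21} (preservation of the \ssstar property under $C^1$ perturbation) and then uses the block structure to reduce to $\tilde Q$; subanalyticity of $\gph\tilde Q$ is shown locally by exhibiting it as the canonical projection of a compact semianalytic set $P$. You instead show directly that $\gph H$ is globally semi-algebraic, using the facts that semi-algebraicity is preserved under affine bijections and Cartesian products, and you give a direct piecewise-polynomial description of $\gph\tilde Q$. Both routes are sound: yours bypasses the \ssstar sum rule and gives a stronger (global, semi-algebraic) conclusion, while the paper's projection-of-a-compact-set construction is the textbook device for verifying subanalyticity and keeps the argument local. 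You also correctly flag the subtlety about which Jourani result is needed: the paper's proof indeed cites \cite[Theorem 3]{Jou07} (distance functions of closed subanalytic sets), which yields the plain \ssstar property, rather than the \cite[Theorem 2]{Jou07} statement quoted earlier in Section~4 for the SCD \ssstar conclusion; your back-up route via preservation of \ssstar under $C^1$ perturbations is essentially the paper's actual reduction step.
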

\begin{proof}
Consider a point $(\bar{u}, \bar{w}) \in \gph H$. By \cite[Proposition 3.6]{GfrOut21} it suffices to show that $Q$ is \ssstar at $(\bar{u},\bar{w}-A\bar{u}-l)$,
which definitely holds true provided $\tilde Q$ is \ssstar at all points of its graph. Thus, invoking \cite[Theorem 3]{Jou07} and using the connection between the \ssstar property of  sets and the respective distance functions, it suffices to show that $\gph \tilde Q$ is a subanalytic set. Let us pick a reference point $(\bar{v},\bar{g},\bar{\vartheta}) \in \gph \tilde Q$ and consider the set
\[P=\left\{(v,g,\vartheta,p)\in \R^3\times\R^2\times\R\times\R^2\,\left|\;
\begin{aligned} &\norm{v-\bar v}^2+\norm{(g,\vartheta)-(\bar g,\bar\vartheta)}^2\leq 1,\ \norm{p}^2\leq 1,\\
&\norm{v_{12}}^2p_1^2=v_1^2, \norm{v_{12}}^2p_2^2=v_2^2,\\
&v_1p_1\geq 0,\ v_2p_2\geq 0,\\
&g_1=-\F\vartheta p_1,\ g_2=-\F\vartheta p_2,\\
&v_3\geq 0,\ \vartheta\leq 0,\ v_3\vartheta=0
\end{aligned}\right.\right\}.\]
Clearly, $P$ is semianalytic (as the intersection of finitely many polynomial equalities and inequalities, it is even semialgebraic) and compact. Moreover, by construction, $\gph \widetilde{Q} \cap \mathcal{B}_1(\bar{v},\bar{g},\bar{\vartheta})$ is the canonical projection of $P$ onto the space of variables $(v,g,\vartheta)$ and hence subanalytic, cp. \cite{BM88}. The proof is complete.
\end{proof}
\begin{proposition}\label{PropGraphLipTildeQ}
For every $\gamma>0$, the mapping $(\gamma I_3+\tilde Q)^{-1}:\R^3\tto\R^3$ is single-valued and Lipschitz continuous on $\R^3$.  In particular, $\tilde Q$ is graphically Lipschitzian of dimension $3$ at every point of its graph.
\end{proposition}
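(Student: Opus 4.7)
The strategy is to construct $(\gamma I_3+\tilde Q)^{-1}$ explicitly, exploiting the fact that in the description of $\tilde Q$ the coupling between $v_{12}$ and $v_3$ runs only through the scalar multiplier $\vartheta\in N_{\R_+}(v_3)$. I would first decompose the inclusion $w\in\gamma v+\tilde Q(v)$ into two subproblems: the normal block $w_3-\gamma v_3=\vartheta$ with $\vartheta\in N_{\R_+}(v_3)$, and the tangential block $w_{12}-\gamma v_{12}\in -\F\vartheta\,\partial\norm{v_{12}}$.

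The normal block is a scalar linear complementarity problem admitting the unique solution $v_3=\max(w_3,0)/\gamma$, $\vartheta=\min(w_3,0)$; both depend Lipschitz-continuously on $w_3$. Setting $c:=-\F\vartheta=\F\max(-w_3,0)\geq 0$, which is therefore $\F$-Lipschitz in $w_3$, the tangential block becomes the resolvent equation $w_{12}\in\gamma v_{12}+c\,\partial\norm{v_{12}}$ for the closed proper convex function $c\norm{\cdot}$ on $\R^2$. Invoking Moreau's decomposition and the fact that the Fenchel conjugate of $c\norm{\cdot}$ is the indicator of the closed ball $\B_c(0)\subset\R^2$, the unique solution is given by the soft-thresholding formula $v_{12}=(w_{12}-P_{\B_c(0)}(w_{12}))/\gamma$. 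A short computation using the closed-form projection (the identity inside the ball, $cw_{12}/\norm{w_{12}}$ outside) shows that $(w_{12},c)\mapsto P_{\B_c(0)}(w_{12})$ is jointly globally Lipschitz; composing with the Lipschitz dependence of $c$ on $w_3$ yields that $S:=(\gamma I_3+\tilde Q)^{-1}:\R^3\to\R^3$ is everywhere single-valued and Lipschitz.

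For the ``in particular'' assertion I would verify Definition~\ref{DefGraphLip} globally with $d=3$ using the linear diffeomorphism $\Phi:\R^3\times\R^3\to\R^3\times\R^3$ defined by $\Phi(v,q):=(\gamma v+q,v)$, whose inverse is $\Phi^{-1}(w,r)=(r,w-\gamma r)$. The equivalence $(v,q)\in\gph\tilde Q\Leftrightarrow v=S(\gamma v+q)$ yields $\Phi(\gph\tilde Q)=\{(w,S(w))\mid w\in\R^3\}=\gph S$, the graph of the Lipschitz function $S:\R^3\to\R^3$. Since this parametrization works simultaneously at every point of $\gph\tilde Q$, the mapping $\tilde Q$ is graphically Lipschitzian of dimension $3$ globally.

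I expect the only genuinely technical step to be the joint Lipschitz continuity of the projection onto a ball of variable radius, but this is straightforward once one bounds the derivatives separately in the two regions $\norm{w_{12}}\leq c$ and $\norm{w_{12}}>c$ and checks the continuous matching across the boundary. The main conceptual point worth emphasizing is that although $\tilde Q$ is not hypomonotone (so the general sufficient condition mentioned after Proposition~\ref{PropApprStepResolvent} is not available here), the explicit decoupling between the normal and tangential directions still forces $(\gamma I_3+\tilde Q)^{-1}$ to be single-valued and Lipschitz for \emph{every} $\gamma>0$, without any smallness restriction.
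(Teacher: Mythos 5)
Your proof is correct and follows the same overall blueprint as the paper's: block decomposition into the normal scalar complementarity problem and the tangential subdifferential inclusion, the explicit solution $v_3=\max(w_3,0)/\gamma$, $\vartheta=\min(w_3,0)$, the soft-thresholding formula for $v_{12}$, and the linear change of coordinates $\Phi(v,q)=(\gamma v+q,v)$ for the graphically Lipschitzian conclusion. Where you diverge is in establishing the Lipschitz constant for the tangential block: the paper argues abstractly by pairing the two resolvent equations against $v^1_{12}-v^2_{12}$, invoking monotonicity of $\partial\norm{\cdot}$ to discard the middle term and the bound $\norm{v^{*}_{12}}\le 1$ on the remaining one, arriving at $\gamma\norm{v^1_{12}-v^2_{12}}\le\norm{w^1_{12}-w^2_{12}}+\F\,|w_3^1-w_3^2|$ without ever invoking the closed-form projection. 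You instead exploit the explicit soft-thresholding formula and the joint Lipschitz continuity of $(w_{12},c)\mapsto P_{\B_c(0)}(w_{12})$. That map is indeed globally nonexpansive in $w_{12}$ and $1$-Lipschitz in $c$ (a one-line triangle-inequality check gives $\norm{P_{\B_{c_1}(0)}(w^1)-P_{\B_{c_2}(0)}(w^2)}\le\norm{w^1-w^2}+|c_1-c_2|$), so your composition argument closes the gap. The trade-off: the paper's monotonicity estimate is shorter and avoids any case analysis, while your route makes the constant completely explicit via the known formula and isolates the only place where $\F$ enters (through $c=\F\max(-w_3,0)$). Both deliver the same conclusion, including that no smallness condition on $\gamma$ is needed despite the failure of hypomonotonicity of $\tilde Q$ — a point you rightly emphasize.
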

\begin{proof}
  We have $v\in (\gamma I_3+\tilde Q)^{-1}(w)$ if and only if
  \begin{align}\label{EqAuxTildeQ1}\gamma v_3+\vartheta =w_3\\
  \label{EqAuxTildeQ2}\gamma v_{12}+\F(-\vartheta) v_{12}^*=w_{12}
  \end{align}
  for some $\vartheta\in N_{\R_+}(v_3)$ and some $v_{12}^*\in \partial\norm{v_{12}}$. Since $\gamma I_1+N_{\R_+}$ is both maximal monotone and strongly monotone, $v_3$ and $\vartheta$ are uniquely given by
  \begin{equation}\label{EqAuxTildeQ3}v_3=(\gamma I_1+N_{\R_+})^{-1}(w_3)=\frac{\max\{w_3,0\}}\gamma,\quad \vartheta=w_3-\gamma v_3=\min\{w_3,0\}.\end{equation}
  For given $\vartheta\leq 0$ the mapping $\gamma I_2+\F(-\vartheta)\partial\norm{\cdot}$ is again maximal monotone and strongly monotone and thus $v_{12}$ is uniquely given by
  \[v_{12}=(\gamma I_2+\F(-\vartheta)\partial \norm{\cdot})^{-1}(w_{12})=\begin{cases}0&\mbox{if $\norm{w_{12}}\leq \F(-\vartheta)$,}\\
  \frac 1\gamma\left(1-\frac{\F(-\vartheta)}{\norm{w_{12}}}\right)w_{12}&\mbox{if $\norm{w_{12}}>\F(-\vartheta)$.}\end{cases}\]
  These arguments prove that $(\gamma I_3+\tilde Q)^{-1}$ is single-valued on $\R^3$ and there remains to show the Lipschitz continuity. Consider two points $w^j$, $j=1,2$, together with
  $(\gamma I_3+\tilde Q)^{-1}(w^j)=\{v^j\}$ and the corresponding $\vartheta^j\in N_{\R_+}(v_3^j)$, $v_{12}^{*j}\in \partial\norm{v_{12}^j}$, $j=1,2$, according to \eqref{EqAuxTildeQ1}, \eqref{EqAuxTildeQ2}. Then we deduce from \eqref{EqAuxTildeQ2} that
  \begin{align*}\skalp{w^1_{12}-w^2_{12}, v^1_{12}-v^2_{12}}&=\gamma\norm{v^1_{12}-v^2_{12}}^2+\F\skalp{-\vartheta^1 v_{12}^{*1}+\vartheta^2 v_{12}^{*2},v^1_{12}-v^2_{12}}\\
  &=
  \gamma\norm{v^1_{12}-v^2_{12}}^2+\F(-\vartheta^1)\skalp{v_{12}^{*1}-v_{12}^{*2},v^1_{12}-v^2_{12}}+\F(\vartheta^2-\vartheta^1)\skalp{v_{12}^{*2},v^1_{12}-v^2_{12}}\\
  &\geq \gamma\norm{v^1_{12}-v^2_{12}}^2-\F\vert \vartheta^1-\vartheta^2\vert\norm{v^1_{12}-v^2_{12}},\end{align*}
  where we have used the facts that $-\vartheta^1\geq 0$,  that the subdifferential mapping $\partial\norm{\cdot}$ is monotone and that $\norm{v_{12}^{*2}}\leq 1$. Since the functions $t\to\min\{t,0\}$ and $t\to\max\{t,0\}$ are  Lipschitz continuous on $\R$ with constant $1$, we obtain from \eqref{EqAuxTildeQ3} that $\vert\vartheta^1-\vartheta^2\vert\leq \vert w_3^1-w_3^2\vert$ yielding
  \[\gamma\norm{v^1_{12}-v^2_{12}}^2\leq \skalp{w^1_{12}-w^2_{12}, v^1_{12}-v^2_{12}}+\F\vert w_3^1-w_3^2\vert\norm{v^1_{12}-v^2_{12}}\leq \big(\norm{w^1_{12}-w^2_{12}}+\F\vert w_3^1-w_3^2\vert\big)\norm{v^1_{12}-v^2_{12}}\]
  and consequently $\gamma\norm{v^1_{12}-v^2_{12}}\leq \norm{w^1_{12}-w^2_{12}}+\F\vert w_3^1-w_3^2\vert$. Since we also have $\gamma\vert v_3^1-v_3^2\vert\leq \vert w_3^1-w_3^2\vert$, we obtain
  \begin{align*}\gamma^2\norm{v^1-v^2}^2&\leq (\norm{w^1_{12}-w^2_{12}}+\F\vert w_3^1-w_3^2\vert)^2+\vert w_3^1-w_3^2\vert^2\leq 2(1+\F^2)(\norm{w^1_{12}-w^2_{12}}^2+\vert w_3^1-w_3^2\vert^2)\\
  &=2(1+\F^2)\norm{w^1-w^2}^2\end{align*}
  establishing Lipschitz continuity of $(\gamma I_3+\tilde Q)^{-1}$. To see that $\tilde Q$ is graphically Lipschitzian of dimension $3$, just take $\Phi(x,y)=(\gamma x+y,x)$ and $f:=(\gamma I+\tilde Q)^{-1}$ to obtain $\gph f=\Phi(\gph \tilde Q)$.
\end{proof}
\begin{remark}
  Note that the mapping $\tilde Q$ is not hypomonotone, that is, for every $\gamma>0$ the mapping $\gamma I_3+\tilde Q$ is not monotone. Indeed, consider $\gamma>0$ and let
  \[(v^1,w^1):=\big((1,0,0),(2\gamma,0,-2\frac\gamma\F)\big)\in\gph\tilde Q,\quad (v^2,w^2):=\big((2,0,0),(0,0,0)\big)\in \gph\tilde Q.\]
  Then
  \[\skalp{(\gamma v^1+w^1)-(\gamma v^2+w^2),v^1 -v^2}=\gamma\norm{v^1-v^2}^2+\skalp{(2\gamma,0,-2\frac\gamma\F),(-1,0,0)}=-\gamma<0\]
  and therefore  $\gamma I_3+\tilde Q$ is not monotone.\\
  Further note that in the case when $\vartheta=0$ and $v_{12}=0$ the subgradient $v_{12}^*\in\partial\norm{v_{12}}$ fulfilling \eqref{EqAuxTildeQ2} is not uniquely given.
\end{remark}
Throughout the sequel, it is convenient to refer to \cite{BeHaKoKuOut09} and express the graph of $\tilde Q$ in the form
\begin{equation}\label{eq-11}
\gph \tilde Q = L \cup M_{1} \cup M_{3}^{+} \cup M_{2} \cup M_{3}^{-} \cup M_{4},
 \end{equation}
where the single sets arising in \eqref{eq-11} do have a clear mechanical interpretation. Their definitions are provided in the following table.

\begin{table}[H]
\centering
\begin{tabular}{|l|c|c|c|}
\hline
& no contact &  weak contact & strong contact\\
& $v_{3}>0, \vartheta=0$  & $v_{3}=0, \vartheta=0$ &  $v_{3}=0, \vartheta<0$\\
\hline
sliding $v_{12}\neq 0$ & \multirow{ 3}{*}{$L$} & $M_{2}$ & $M_{1}$ \\
\cline{1-1} \cline{3-4}
weak sticking & 
&\multirow{ 2}{*}{$M_{4}$} & \multirow{ 2}{*}{$M_{3}^{-}$}  \\
$v_{12}=0,\|g\| = -\mathcal{F} \vartheta$ & &   & \\
\hline
strong sticking & \multirow{ 2}{*}{$-$} & \multirow{ 2}{*}{$-$} &
\multirow{ 2}{*}{$M^{+}_{3}$} \\
$v_{12}=0, \| g \|< -\mathcal{F} \vartheta$ & & & \\
\hline
\end{tabular}
\caption{Definitions and mechanic interpretations of the sets from \eqref{eq-11}.}\label{table_sets}
\end{table}
Note that in Table \ref{table_sets} the impossible combinations of variables are crossed out.

\begin{proposition}\label{prop:SCD}
$\tilde Q$ is an SCD mapping and $\mathcal{O}_{\tilde Q}= L \cup M_{1} \cup M_{3}^{+}$. In particular, for $(\bar{v},\bar{g},\bar{\vartheta}) \in L$
\begin{equation}\label{eq-12}
\widehat\Sp\tilde Q(\bar{v},\bar{g},\bar{\vartheta})= \Sp\tilde Q(\bar{v},\bar{g},\bar{\vartheta})=
\widehat\Sp^*\tilde Q(\bar{v},\bar{g},\bar{\vartheta})=\Sp^*\tilde Q(\bar{v},\bar{g},\bar{\vartheta})=\{\rge (I_3,0)\},
\end{equation}
for $(\bar{v},\bar{g},\bar{\vartheta})\in M_{3}^{+}$
\begin{equation}\label{eq-13}
\widehat\Sp\tilde Q(\bar{v},\bar{g},\bar{\vartheta})=\Sp\tilde Q(\bar{v},\bar{g},\bar{\vartheta})=
\widehat\Sp^*\tilde Q(\bar{v},\bar{g},\bar{\vartheta})=\Sp^*\tilde Q(\bar{v},\bar{g},\bar{\vartheta})=\{\rge (0,I_3)\},
\end{equation}
and for $(\bar{v},\bar{g},\bar{\vartheta})\in M_{1}$ one has
\begin{equation}\label{eq-14}
\widehat\Sp\tilde Q(\bar{v},\bar{g},\bar{\vartheta})=\Sp\tilde Q(\bar{v},\bar{g},\bar{\vartheta})=\left\{\rge  \left [
 \left (
\begin{array}{ll}
I_{2} & 0\\
0 & 0
\end{array}
\right ),
\left (
\begin{matrix}
-\mathcal{F}\bar{\vartheta}\frac{1}{\| \bar{v}_{12}\|}
\left ( I_2- \frac{\bar{v}_{12}\bar{v}_{12}^{T}}{\| \bar{v}_{12} \|^{2}} \right ) & -\mathcal{F}
\frac{\bar{v}_{12}}{\| \bar{v}_{12} \|}\\
0 & 1
\end{matrix}\right )
\right ]\right\},
\end{equation}
\begin{equation}\label{eq-15}
\widehat\Sp^*\tilde Q(\bar{v},\bar{g},\bar{\vartheta})=\Sp^*\tilde Q(\bar{v},\bar{g},\bar{\vartheta})=\left\{\rge  \left [
 \left (
\begin{matrix}
I_{2} & 0\\
\mathcal{F} \frac{\bar{v}_{12}^T}{\| \bar{v}_{12} \|} & 0
\end{matrix}
\right ),
\left (
\begin{matrix}
-\mathcal{F}\bar{\vartheta}\frac{1}{\| \bar{v}_{12}\|}
\left ( I_2- \frac{\bar{v}_{12}\bar{v}_{12}^{T}}{\| \bar{v}_{12} \|^{2}} \right ) & 0\\
0 & 1
\end{matrix}\right )
 \right ]\right\}.
\end{equation}
\end{proposition}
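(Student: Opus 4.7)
First, I would globally establish the SCD property. Proposition \ref{PropGraphLipTildeQ} guarantees that $\tilde Q$ is graphically Lipschitzian of dimension $3$ at every point of $\gph\tilde Q$, and by \cite[Proposition 3.17]{GfrOut22a}, recalled right after Definition \ref{DefGraphLip}, this forces $\tilde Q$ to have the SCD property around every such point. Consequently $\tilde Q$ is an \SCD mapping and $\OO_{\tilde Q}$ is dense in $\gph\tilde Q$.

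For the classification $\OO_{\tilde Q}=L\cup M_1\cup M_3^+$ and the derivative formulas, I would treat the six strata of Table \ref{table_sets} one by one. On each of $L$, $M_3^+$, and $M_1$ the graph admits a smooth three-dimensional parameterization: $(v_{12},v_3)\mapsto(v_{12},v_3,0,0,0)$ with $v_3>0$ on $L$; $(g,\vartheta)\mapsto(0,g,\vartheta)$ with $\vartheta<0$ and $\norm{g}<-\F\vartheta$ on $M_3^+$; and $(v_{12},\vartheta)\mapsto(v_{12},0,-\F\vartheta\,v_{12}/\norm{v_{12}},\vartheta)$ with $v_{12}\neq 0$ and $\vartheta<0$ on $M_1$. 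Differentiating these, and using $\nabla(v_{12}/\norm{v_{12}})=\norm{v_{12}}^{-1}(I_2-v_{12}v_{12}^T/\norm{v_{12}}^2)$ in the last case, produces precisely the three-dimensional tangent subspaces in \eqref{eq-12}, \eqref{eq-13}, and \eqref{eq-14}, so these three strata are contained in $\OO_{\tilde Q}$. The dual-side formulas in \eqref{eq-12}, \eqref{eq-13}, and \eqref{eq-15} then follow from the definition \eqref{EqDualSubspace}: for $\rge(I_3,0)$ and $\rge(0,I_3)$ one checks self-duality $L^*=L$, while for the matrix in \eqref{eq-14} a routine computation of $L^\perp$ yields the generators in \eqref{eq-15}.

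Next I would argue that $\widehat\Sp\tilde Q=\Sp\tilde Q$, and therefore also $\widehat\Sp^*\tilde Q=\Sp^*\tilde Q$ via \eqref{EqSp*Sp}, at every point of $L\cup M_1\cup M_3^+$. This is a short local verification: the strict inequalities defining each stratum ($v_3>0$ on $L$; $\vartheta<0$ and $\norm{g}<-\F\vartheta$ on $M_3^+$; $v_{12}\neq 0$ and $\vartheta<0$ on $M_1$) persist in a neighborhood of the reference point, so every graph point sufficiently close to the reference lies in the very same stratum, while the tangent-space formulas obtained above depend continuously on the reference point. Hence the outer limit in the definition of $\Sp\tilde Q$ collapses to the single value $\widehat\Sp\tilde Q$.

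Finally, for every reference point in $M_2$, $M_3^-$, or $M_4$ I would verify that $T_{\gph\tilde Q}$ fails to be a linear subspace, which by Definition \ref{DefSCDProperty}(i) excludes such a point from $\OO_{\tilde Q}$. In each case I plan to exhibit a tangent direction whose opposite cannot be realized within $\gph\tilde Q$. For a reference in $M_2$ or $M_4$, the unilateral constraint $v_3\ge 0$ suffices: approaching from $L$ produces $dv_3>0$ as a tangent, whereas no stratum admits $dv_3<0$. For a reference $(0,\bar g,\bar\vartheta)\in M_3^-$ one must instead exploit the nonlinear relation in $M_1$: any sequence $(v_{12}^k,0,g^k,\vartheta^k)$ in $M_1$ converging to $(0,\bar g,\bar\vartheta)$ is forced to satisfy $v_{12}^k/\norm{v_{12}^k}\to\bar g/\norm{\bar g}$, for otherwise $g^k=-\F\vartheta^k v_{12}^k/\norm{v_{12}^k}$ would not approach $\bar g$; consequently only nonnegative multiples of $\bar g/\norm{\bar g}$ occur as $dv_{12}$-components in the tangent cone, and the opposite direction is blocked. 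Tracking the direction of approach through this nonlinear relation is, in my view, the main technical hurdle in the proof.
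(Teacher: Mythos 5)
Your proposal is correct and follows essentially the same route as the paper's own proof: establish the SCD property via the graphically Lipschitzian result of Proposition \ref{PropGraphLipTildeQ}; read off the tangent subspaces on $L$, $M_3^+$, $M_1$ from the local smooth structure of $\gph\tilde Q$ (the paper phrases this via the local identity $\gph\tilde Q\cap\B_\rho(\bar v,\bar g,\bar\vartheta)$ rather than an explicit parameterization, but the computation is the same); obtain the dual subspaces via \eqref{EqDualSubspace}; deduce $\widehat\Sp=\Sp$ and $\widehat\Sp^*=\Sp^*$ from the stability of the strata and the continuous dependence of the basis matrices on the reference point; and exclude $M_2\cup M_4$ via the unilateral constraint on $v_3$ and $M_3^-$ via the forced alignment $v_{12}^k/\norm{v_{12}^k}\to\bar g/\norm{\bar g}$ along approaching sequences in $M_1$.
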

\begin{proof}Since $\tilde Q$ is graphically Lipschitzian of dimension 3 at every point of its graph, it is an \SCD mapping.

Note that the sets $L, M_1$ and $M_{3}^{+}$ exhibit a stable behavior in the sense that, for a sufficiently small $ \varrho > 0$,
$$
\left . \begin{array}{l}
(\bar{v},\bar{g},\bar{\vartheta})\in L( {\rm or}~ M_{1}, {\rm or}~ M_{3}^{+})\\
(v,g,\vartheta) \in \gph \tilde Q \cap \B_\rho(\bar{v},\bar{g},\bar{\vartheta})
\end{array}\right \} \Rightarrow (v,g,\vartheta) \in L ( {\rm or}~ M_{1}, {\rm or}~ M_{3}^{+}).
$$
In particular, we have
\[\gph\tilde Q\cap \B_\rho(\bar v,\bar g,\bar\vartheta)=\begin{cases}(\R^3\times\{0\})\cap \B_\rho(\bar v,\bar g,\bar\vartheta)&\mbox{if $(\bar v,\bar g,\bar\vartheta)\in L$,}\\
(\{0\} \times \R^3)\cap \B_\rho(\bar v,\bar g,\bar\vartheta)&\mbox{if $(\bar v,\bar g,\bar\vartheta)\in M_3^+$,}\\
\{(v_{12},0,-\F\vartheta\frac{v_{12}}{\norm{v_{12}}},\vartheta)\mv v_{12}\in\R^2,\vartheta\in\R\}\cap \B_\rho(\bar v,\bar g,\bar\vartheta)&\mbox{if $(\bar v,\bar g,\bar\vartheta)\in M_1$.}\end{cases}\]
It follows from Definition \ref{DefVarGeom} that
\begin{eqnarray*}\lefteqn{T_{\gph\tilde Q}(\bar v,\bar g,\bar\vartheta)=}\\
&\begin{cases}\R^3\times\{0\}&\mbox{if $(\bar v,\bar g,\bar\vartheta)\in L$,}\\
\{0\} \times \R^3&\mbox{if $(\bar v,\bar g,\bar\vartheta)\in M_3^+$,}\\
\left\{\left(h_{12},0,-\F\left(\bar\vartheta\frac1{\norm{\bar v_{12}}}\left(I-\frac{\bar v_{12}\bar v_{12}^T}{\norm{\bar v_{12}^2}}\right)h_{12}+\omega\frac{\bar v_{12}}{\norm{\bar v_{12}}}\right),\omega\right)\mv h_{12}\in\R^2,\omega\in\R\right\}&\mbox{if $(\bar v,\bar g,\bar\vartheta)\in M_1$.}\end{cases}
\end{eqnarray*}
In all three cases, we have therefore to do with linear subspaces of dimension three, which yield $\mathcal{O}_{\tilde Q} \supset L \cup M_{1} \cup M_{3}^{+}$ and the expressions for
$\widehat \Sp \tilde Q(\bar{v},\bar{g},\bar{\vartheta})$ in \eqref{eq-12}, \eqref{eq-13}, and \eqref{eq-14}.
Concerning the expressions for $\widehat\Sp^* \tilde Q(\bar{v},\bar{g},\bar{\vartheta})$, they can be derived by first computing the respective orthogonal complements and then using the relation \eqref{EqDualSubspace}. The equalities $\widehat\Sp \tilde Q(\bar v,\bar g,\bar\vartheta)= \Sp \tilde Q(\bar v,\bar g,\bar\vartheta)$ and $\widehat\Sp^* \tilde Q(\bar v,\bar g,\bar\vartheta)=\Sp^* \tilde Q(\bar v,\bar g,\bar\vartheta)$ in \eqref{eq-12}--\eqref{eq-15} follow from the observation that the matrices that describe the subspaces continuously depend on the argument $(\bar v,\bar g,\bar\vartheta)$.

It remains to show that actually $\OO_{\tilde Q} = L \cup M_{1} \cup M_{3}^{+}$, that is, $(M_2\cup M_4\cup M_3^-)\cap\OO_{\tilde Q}=\emptyset$. Consider first $(\bar v,\bar g,\bar\vartheta)\in M_2\cup M_4$. Then $\bar v_3=\bar\vartheta=0$ and it follows that $\big((0,0,1),(0,0,0)\big)\in T_{\gph \tilde Q}(\bar v,\bar g,\bar\vartheta)$, but the opposite direction $\big((0,0,-1),(0,0,0)\big)$ cannot belong to the tangent cone because $-1\not\in T_{\R_+}(0)$. Hence, $T_{\gph \tilde Q}(\bar v,\bar g,\bar\vartheta)$ is not a subspace and $(\bar v,\bar g,\bar\vartheta)\not\in\OO_{\tilde Q}$ follows. Finally, let $(\bar v,\bar g,\bar\vartheta)\in M_3^-$. Then for all $t>0$ we have $\big((t\bar g,0),(-\F\bar\vartheta\bar g,\bar \vartheta)\big)\in \gph\tilde Q$ implying $\big((\bar g,0,0),(0,0,0)\big)\in T_{\gph\tilde Q}(\bar v,\bar g,\bar\vartheta)$. Now assume that $-\big((\bar g,0,0),(0,0,0)\big)\in T_{\gph\tilde Q}(\bar v,\bar g,\bar\vartheta)$. By definition, there are sequences $t_k\downarrow 0$ and $(v^k,g^k,\vartheta^k)\longsetto{\gph\tilde Q}(\bar v,\bar g,\bar\vartheta)$ that satisfy $\big((v^k,g^k,\vartheta^k)-(\bar v,\bar g,\bar\vartheta)\big)/t_k\to \big((-\bar g,0),(0,0,0)\big)$. From $(v^k_{12}-\bar v_{12})/t_k=v^k_{12}/t_k\to -\bar g$,  $\vartheta^k\to\bar\vartheta$ and $\norm{\bar g}=-\F\bar\vartheta$ we deduce
\[\lim_{k\to\infty}g^k=-\lim_{k\to\infty}\F\vartheta^k\frac{ v_{12}^k}{\norm{v_{12}^k}}=-\lim_{k\to\infty}\F\vartheta^k\frac{ v_{12}^k/t_k}{\norm{v_{12}^k}/t_k}=-\F\bar\vartheta\frac{-\bar g}{\norm{\bar g}}= -\bar g\]
contradicting $g^k\to\bar g$  and we conclude that $(\bar v,\bar g,\bar\vartheta)\not\in\OO_{\tilde Q}$. This completes the proof.
\end{proof}
Note that in the formulas \eqref{eq-14} and \eqref{eq-15} the matrices $-\mathcal{F}\bar{\vartheta}/\| \bar{v}_{12}\|\left ( I_2- \bar{v}_{12}\bar{v}_{12}^{T}/\| \bar{v}_{12} \|^{2} \right )$ are unbounded for $\bar v_{12}\to 0$. Theoretically, this does not cause problems, because convergence is related to \SCD regularity, which is independent from the basis representation used of the underlying subspaces. However, the use of ill-conditioned bases might produce numerical instability when computing the Newton direction. For this reason, we present another representation of the collections $\Sp\tilde Q(\bar{v},\bar{g},\bar{\vartheta})$ and $\Sp^*\tilde Q(\bar{v},\bar{g},\bar{\vartheta})$ with a well-conditioned base when $(\bar{v},\bar{g},\bar{\vartheta})\in M_1$. Observe that for any two $n\times n$ matrices $A,B$ and every nonsingular $n\times n$ matrix $C$ there are $\rge(A,B)=\rge(AC,BC)$. Thus, the following corollary follows from \eqref{eq-14}, \eqref{eq-15} by using the scaling matrix
\[C=\left(\begin{matrix}
  \left(I_2-\mathcal{F}\bar{\vartheta}\frac{1}{\| \bar{v}_{12}\|}
\left ( I_2- \frac{\bar{v}_{12}\bar{v}_{12}^{T}}{\| \bar{v}_{12} \|^{2}} \right )\right)^{-1}&0\\0&1
\end{matrix}\right)=\left(\begin{matrix}\frac{\norm{\bar v_{12}}}{\norm{\bar v_{12}}-\F\bar\vartheta} I_2- \frac{\F\bar\vartheta}{\norm{\bar v_{12}}-\F\bar\vartheta}\frac{\bar v_{12}\bar v_{12}^T}{\norm{\bar v_{12}}^2}&0\\0&1
\end{matrix}\right).\]
\begin{corollary}
  For $(\bar v,\bar g,\bar\vartheta)\in M_1$ we have
  \begin{align}
   \label{EqM1Alt1} \Sp\tilde Q(\bar v,\bar g,\bar\vartheta)=\left\{\rge\left[
    \left(\begin{matrix}\frac{\norm{\bar v_{12}}}{\norm{\bar v_{12}}-\F\bar\vartheta} I_2- \frac{\F\bar\vartheta}{\norm{\bar v_{12}}-\F\bar\vartheta}\frac{\bar v_{12}\bar v_{12}^T}{\norm{\bar v_{12}}^2}&0\\0&0
\end{matrix}\right), \left(\begin{matrix}\frac{-\F\bar\vartheta}{\norm{\bar v_{12}}-\F\bar\vartheta}\left(I_2-\frac{\bar v_{12}\bar v_{12}^T}{\norm{\bar v_{12}}^2}\right)&
-\F\frac{\bar v_{12}}{\norm{\bar v_{12}}}\\0&1
\end{matrix}\right)    \right]\right\}\\
  \label{EqM1Alt2}  \Sp^*\tilde Q(\bar v,\bar g,\bar\vartheta)=\left\{\rge\left[
    \left(\begin{matrix}\frac{\norm{\bar v_{12}}}{\norm{\bar v_{12}}-\F\bar\vartheta} I_2- \frac{\F\bar\vartheta}{\norm{\bar v_{12}}-\F\bar\vartheta}\frac{\bar v_{12}\bar v_{12}^T}{\norm{\bar v_{12}}^2}&0\\\F\frac{\bar v_{12}^T}{\norm{\bar v_{12}}}&0
\end{matrix}\right), \left(\begin{matrix}\frac{-\F\bar\vartheta}{\norm{\bar v_{12}}-\F\bar\vartheta}\left(I_2-\frac{\bar v_{12}\bar v_{12}^T}{\norm{\bar v_{12}}^2}\right)&
0\\0&1
\end{matrix}\right)    \right]\right\}.
  \end{align}
\end{corollary}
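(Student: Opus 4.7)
The plan is to apply the elementary observation noted right above the corollary's statement, namely that for any $n\times n$ matrices $A,B$ and any nonsingular $n\times n$ matrix $C$ one has $\rge(A,B)=\rge(AC,BC)$, directly to the bases supplied by Proposition \ref{prop:SCD}. The scaling matrix is the block-diagonal
\[C=\left(\begin{matrix}M^{-1}&0\\0&1\end{matrix}\right),\qquad M:=I_2-\F\bar{\vartheta}\frac{1}{\|\bar v_{12}\|}\Big(I_2-\frac{\bar v_{12}\bar v_{12}^T}{\|\bar v_{12}\|^2}\Big),\]
and the work splits into three routine pieces: (i) verifying that $M$ is invertible and that its inverse coincides with the $(1,1)$-block of $C$ displayed in the statement; (ii) performing the right multiplication $(A,B)\mapsto(AC,BC)$ on the primal pair from \eqref{eq-14} to recover \eqref{EqM1Alt1}; and (iii) doing the analogous calculation for the dual pair from \eqref{eq-15} to recover \eqref{EqM1Alt2}.

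For (i), I would introduce the orthogonal projections $P:=\bar v_{12}\bar v_{12}^T/\|\bar v_{12}\|^2$ and $P^\perp:=I_2-P$ onto $\Span \bar v_{12}$ and its orthogonal complement. Using $P^2=P$, $(P^\perp)^2=P^\perp$ and $PP^\perp=0$, the matrix $M$ diagonalises in this decomposition as $M=P+\big(1-\F\bar\vartheta/\|\bar v_{12}\|\big)P^\perp$. Since $\bar\vartheta<0$ and $\F>0$ on $M_1$, the scalar coefficient is strictly positive, so $M$ is invertible with
\[M^{-1}=P+\frac{\|\bar v_{12}\|}{\|\bar v_{12}\|-\F\bar\vartheta}\,P^\perp=\frac{\|\bar v_{12}\|}{\|\bar v_{12}\|-\F\bar\vartheta}I_2-\frac{\F\bar\vartheta}{\|\bar v_{12}\|-\F\bar\vartheta}\,\frac{\bar v_{12}\bar v_{12}^T}{\|\bar v_{12}\|^2},\]
which is exactly the $(1,1)$-block of $C$ in the statement.

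For (ii) and (iii), the computations reduce to block-matrix multiplication. The upper-left block of $AC$ in both cases is $I_2M^{-1}=M^{-1}$; the upper-left block of $BC$ in the primal case is $-\F\bar\vartheta\|\bar v_{12}\|^{-1}P^\perp M^{-1}$, which collapses via $P^\perp P=0$ and $P^\perp P^\perp=P^\perp$ to $\big(-\F\bar\vartheta/(\|\bar v_{12}\|-\F\bar\vartheta)\big)P^\perp$, matching \eqref{EqM1Alt1}. In the dual case the only new block is the $(2,1)$-block of $A^*C$, namely $\F\bar v_{12}^T\|\bar v_{12}\|^{-1}M^{-1}$; the identities $\bar v_{12}^TP=\bar v_{12}^T$ and $\bar v_{12}^TP^\perp=0$ collapse it to $\F\bar v_{12}^T/\|\bar v_{12}\|$, matching \eqref{EqM1Alt2}. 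The remaining blocks are either zero or multiplied by the scalar $1$ in the lower-right of $C$, hence unchanged. There is no substantive obstacle; the only subtlety worth flagging is that invertibility of $M$ and thus well-definedness of the scaling depends crucially on $\bar\vartheta<0$, which is the defining condition of the set $M_1$.
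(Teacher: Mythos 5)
Your proposal is correct and follows essentially the same route as the paper: the paper itself introduces the scaling matrix $C$ (with the same block-diagonal form and the same explicit inverse) immediately before stating the corollary and simply asserts that the corollary follows from \eqref{eq-14} and \eqref{eq-15} by right-multiplying by $C$. Your contribution is to spell out the details the paper leaves implicit --- diagonalising $M$ against the orthogonal projections $P,P^\perp$ to get invertibility (which indeed hinges on $\bar\vartheta<0$ on $M_1$) and the closed form of $M^{-1}$, and then using $P^\perp M^{-1}=\tfrac{\|\bar v_{12}\|}{\|\bar v_{12}\|-\F\bar\vartheta}P^\perp$ and $\bar v_{12}^T M^{-1}=\bar v_{12}^T$ to collapse the remaining blocks --- all of which checks out.
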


\noindent From Table 1 one can further infer that
\begin{enumerate}
 \item [(i)]
 every point from $M_{2}$ is accessible by sequences belonging solely to $L$ or to $M_{1}$;
 \item [(ii)]
 every point from $M_{3}^{-}$ is accessible by sequences belonging to $M_{1}$ or to $M_{3}^{+}$, and
 \item [(iii)]
 the singleton $M_{4}=\{(0,0,0)\}$ is accessible by sequences belonging to $L$ or to $M_{1}$  or to $M_3^-$ or to $M_{3}^{+}$.
\end{enumerate}
This implies in particular that
\begin{equation}\label{eq-17}
\begin{array}{l}
\rge (I_3,0)\in \Sp^* \tilde Q(\bar{v},\bar{g},\bar{\vartheta})
\mbox{ for } (\bar{v},\bar{g},\bar{\vartheta}) \in M_{2} \cup M_{4}\mbox{ and }\\
 \rge (0,I_3)\in \Sp^* \tilde Q(\bar{v},\bar{g},\bar{\vartheta})
\mbox{ for } (\bar{v},\bar{g},\bar{\vartheta}) \in M_{3}^{-}.
\end{array}
\end{equation}
Formulas (\ref{eq-17}) are used in the implementation of the Newton step of the SCD \ssstar Newton method to the numerical solution of (\ref{eq-8}) in the next section.

Obviously, the mapping $Q^R$ given by \eqref{eq-10} is Lipschitzian and, therefore, graphically Lipschitzian of dimension $n-3p$ as well. Further $\OO_{Q^R}=\gph Q^R=\R^{n-3p}\times\{0\}$ and
\[\Sp Q^R(v,0)=\Sp^* Q^R(v, 0)=\{\rge(I_{n-3p},0)\},\ v\in \R^{n-3p}.\]

Combining Lemma \ref{LemSCDProduct}, Proposition \ref{PropSCDProduct} and Lemma \ref{LemGraphLisch} with Proposition \ref{PropGraphLipTildeQ} and Proposition \ref{prop:SCD} we arrive at the following corollary.

\begin{corollary}
The mapping $Q$ given by \eqref{eq-9} is a \SCD mapping,
\begin{align*}
  \OO_Q=\left\{\big((u^1,\ldots,u^p,u^R), (w^1,\ldots,w^p,0)\big)\mv (u^i,w^i)\in \OO_{\tilde Q}, u^R\in \R^{n-3p}\right\}
\end{align*}
and for every $(u,w)=\big((u^1,\ldots,u^p,u^R),(w^1,\ldots,w^p,0)\big)\in\gph Q$ we have
\begin{align*}
&\lefteqn{\Sp Q(u,w)=}\\
&\left\{\left.\rge\left[\left(\begin{matrix}
  U^1&&&0\\&\ddots\\&&U^p\\0&&&I_{n-3p}\end{matrix}\right), \left(\begin{matrix}
  W^1&&&0\\&\ddots\\&&W^p\\0&&&0\end{matrix}\right)
  \right]\,\right|\,\begin{array}{l}\rge(U^i,W^i)\in\Sp\tilde Q(u^i,w^i),\\ i=1,\ldots,p\end{array}\right\}\\
&\lefteqn{\Sp^* Q(u,w)=}\\
&\left\{\left.\rge\left[\left(\begin{matrix}
  {W^*}^1&&&0\\&\ddots\\&&{W^*}^p\\0&&&I_{n-3p}\end{matrix}\right), \left(\begin{matrix}
  {U^*}^1&&&0\\&\ddots\\&&{U^*}^p\\0&&&0\end{matrix}\right)
  \right]\,\right|\,\begin{array}{l}\rge({W^*}^i,{U^*}^i)\in\Sp^*\tilde Q(u^i,w^i),\\ i=1,\ldots,p\end{array}\right\}.
\end{align*}
\end{corollary}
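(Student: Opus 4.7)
The plan is to identify the corollary as a direct assembly of three ingredients already in hand: the structure of $Q$ as a Cartesian product of $p$ copies of $\tilde Q$ and one copy of $Q^R$, the calculus rules of Lemma~\ref{LemSCDProduct} and Proposition~\ref{PropSCDProduct} for such products, and the regularity of each factor provided by Proposition~\ref{PropGraphLipTildeQ}, Proposition~\ref{prop:SCD}, and the remarks immediately preceding the corollary for $Q^R$. Nothing new has to be computed; the work is verifying that the hypotheses of Proposition~\ref{PropSCDProduct} apply in both the inclusion direction and the equality direction.

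First I would observe that every factor mapping in the product defining $Q$ is graphically Lipschitzian of the appropriate dimension: $\tilde Q$ of dimension $3$ at every point of its graph by Proposition~\ref{PropGraphLipTildeQ}, and $Q^R$ of dimension $n-3p$ trivially (its graph is $\R^{n-3p}\times\{0\}$, so the identity map provides the diffeomorphism required in Definition~\ref{DefGraphLip}). Each is therefore an \SCD mapping. Applying Lemma~\ref{LemGraphLisch}(i) to every factor shows that the graph of each factor is geometrically derivable at every point of $\OO_{\tilde Q}$ (resp.\ $\OO_{Q^R}$), so the hypothesis of Proposition~\ref{PropSCDProduct} needed to conclude the \SCD property of $Q$ and the inclusions \eqref{EqInclOO_F}--\eqref{EqInclSp*Prod} is satisfied (in fact by all factors, not merely all-but-one).

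Next I would promote those inclusions to equalities. Lemma~\ref{LemGraphLisch}(ii) applied to each factor guarantees that whenever the tangent cone at a point of the graph happens to be a subspace, its dimension is exactly the "right" one ($3$ for $\tilde Q$, $n-3p$ for $Q^R$). This is precisely the additional hypothesis under which Proposition~\ref{PropSCDProduct} asserts equality in \eqref{EqInclOO_F}, \eqref{EqInclSpProd} and \eqref{EqInclSp*Prod}. Together with the trivial data $\Sp Q^R(v,0)=\Sp^*Q^R(v,0)=\{\rge(I_{n-3p},0)\}$ already recorded just before the corollary, substituting into the product formulas yields the block-diagonal matrix descriptions stated for $\Sp Q$ and $\Sp^* Q$, where the last block is forced to be $\bigl[\rge(I_{n-3p},0)\bigr]$ and the first $p$ blocks range independently over $\Sp\tilde Q(u^i,w^i)$ (respectively $\Sp^*\tilde Q(u^i,w^i)$).

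The only potential obstacle is the bookkeeping between the product formulas \eqref{EqInclSpProd}--\eqref{EqInclSp*Prod}, which present each subspace as a set of tuples, and the block-diagonal matrix form required in the corollary; but this is purely notational. Writing a basis $(U^i,W^i)$ for each $L_i\in\Sp\tilde Q(u^i,w^i)$ and stacking them into the diagonal blocks of the $2n\times n$ matrix produces exactly the claimed representation, and the analogous statement for $\Sp^*Q$ follows by the same block-stacking after reading off the bases of the $L_i^*\in\Sp^*\tilde Q(u^i,w^i)$ as $(W^{*i},U^{*i})$. No delicate analysis is required beyond a clean invocation of the cited results.
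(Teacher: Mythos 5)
Your proposal is correct and matches the paper's intent exactly: the paper states the corollary with the one-line justification ``Combining Lemma~\ref{LemSCDProduct}, Proposition~\ref{PropSCDProduct} and Lemma~\ref{LemGraphLisch} with Proposition~\ref{PropGraphLipTildeQ} and Proposition~\ref{prop:SCD}'' and gives no further detail, and your proof simply unpacks that combination, correctly checking both the geometric-derivability hypothesis (via Lemma~\ref{LemGraphLisch}(i)) needed for the inclusions and the dimension hypothesis (via Lemma~\ref{LemGraphLisch}(ii)) needed to upgrade them to equalities in Proposition~\ref{PropSCDProduct}. The only point worth spelling out a little more carefully is that Lemma~\ref{LemGraphLisch} gives its conclusions only on a neighborhood $W$ of a given reference point; since $\tilde Q$ is graphically Lipschitzian of dimension $3$ at \emph{every} point of its graph, one applies the lemma at each point separately, and the union of the resulting neighborhoods covers $\gph\tilde Q$, so the pointwise properties hold globally as you use them.
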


To implement the \ssstar Newton method, we must also specify the approximation step. For every $\gamma>0$ the mapping $\big(\gamma I_{n-3p}+Q^R\big)^{-1}=I_{n-3p}/\gamma$ is obviously single-valued and Lipschitzian on $\R^{n-3p}$. Together with Proposition \ref{PropGraphLipTildeQ} we obtain that
\[(\gamma I_n+Q)^{-1}=\myvec{(\gamma I_3+\tilde Q)^{-1}\\\vdots\\(\gamma I_3+\tilde Q)^{-1}\\(\gamma I_{n-3p}+Q^R)^{-1}}\]
and consequently also the mapping $(I_n+\frac 1\gamma Q)^{-1}=(\gamma I_n+Q)^{-1}\circ \gamma I_n$ has these properties. Thus, for a given iterate $\ee uk$, the choice
\begin{equation}\label{EqApprStepCoulomb}
  \ee{\hat d}k:=(I_n+\frac 1\gamma Q)^{-1}\big( \ee uk-\frac 1\gamma (A\ee uk-l)\big)=(\gamma I_n+ Q)^{-1}\big( \gamma \ee uk-(A\ee uk-l)\big)
\end{equation}
is feasible for the approximation step by Proposition \ref{PropApprStepResolvent}.

\section{Numerical experiments}

We treat various geometries arising from the cuboid $(0,2)\times(0,1)\times(0,1) \subset\R^3$ by modifying its bottom surface. Given a function $d:(0,2)\times(0,1)\mapsto\R$, we consider the elastic body represented by the domain
\[\Omega(d):=\{(x_1,x_2,x_3)\mv (x_1,x_2)\in(0,2)\times(0,1),\ d(x_1,x_2)<x_3<1\}.\]
At the left surface $x_1=0$ of the body, we impose Dirichlet boundary conditions and on the top surface $x_3=1$ and the right surface $x_1=2$ act surface tractions with densities $P_T$ and $P_R$. The rigid obstacle is given by the half-space $\R^2\times\R_-$ so that the contact boundary is the bottom surface of the body.

Depending on the discretization parameter $lev$, the elastic body is uniformly cut into $n_{x_1}\cdot n_{x_2}\cdot n_{x_3}$ hexahedrons, where
\[n_{x_1}=\lceil 4\cdot2^{lev/2}\rceil,\ n_{x_2}=n_{x_3}=\lceil 2\cdot2^{lev/2}\rceil.\]
This results in a hexahedral mesh with $(n_{x_1}+1)\cdot (n_{x_2}+1)\cdot (n_{x_3}+1)$ vertices, where $(n_{x_2}+1)\cdot(n_{x_3}+1)$ vertices are in the Dirichlet part of the boundary and $p:=n_{x_1}\cdot (n_{x_2}+1)$ vertices belong to the contact part of the boundary. The total number of degrees of freedom of the nodal displacements is $$n:=3n_{x_1}\cdot(n_{x_2}+1)\cdot(n_{x_3}+1).$$ The setting is shown in Figure \ref{fig:setup}.

\begin{figure}
\centering
 \begin{minipage}[c]{.32\textwidth}
\centering
\includegraphics[width=0.99\textwidth]{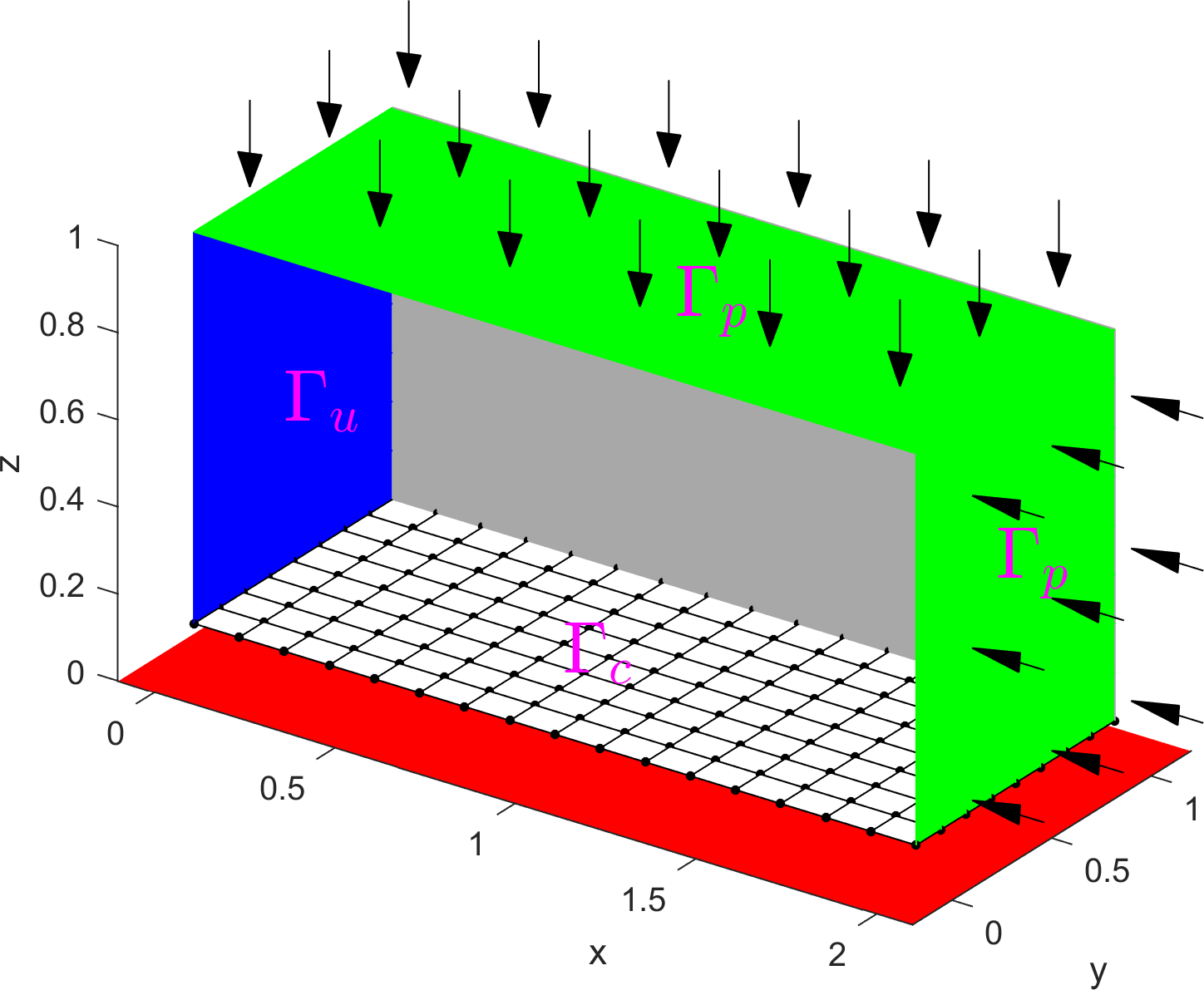}
\end{minipage}
\hspace{.01\textwidth}
\begin{minipage}[c]{.32\textwidth}
\vspace{0.5cm}
\centering
\includegraphics[width=0.99\textwidth]{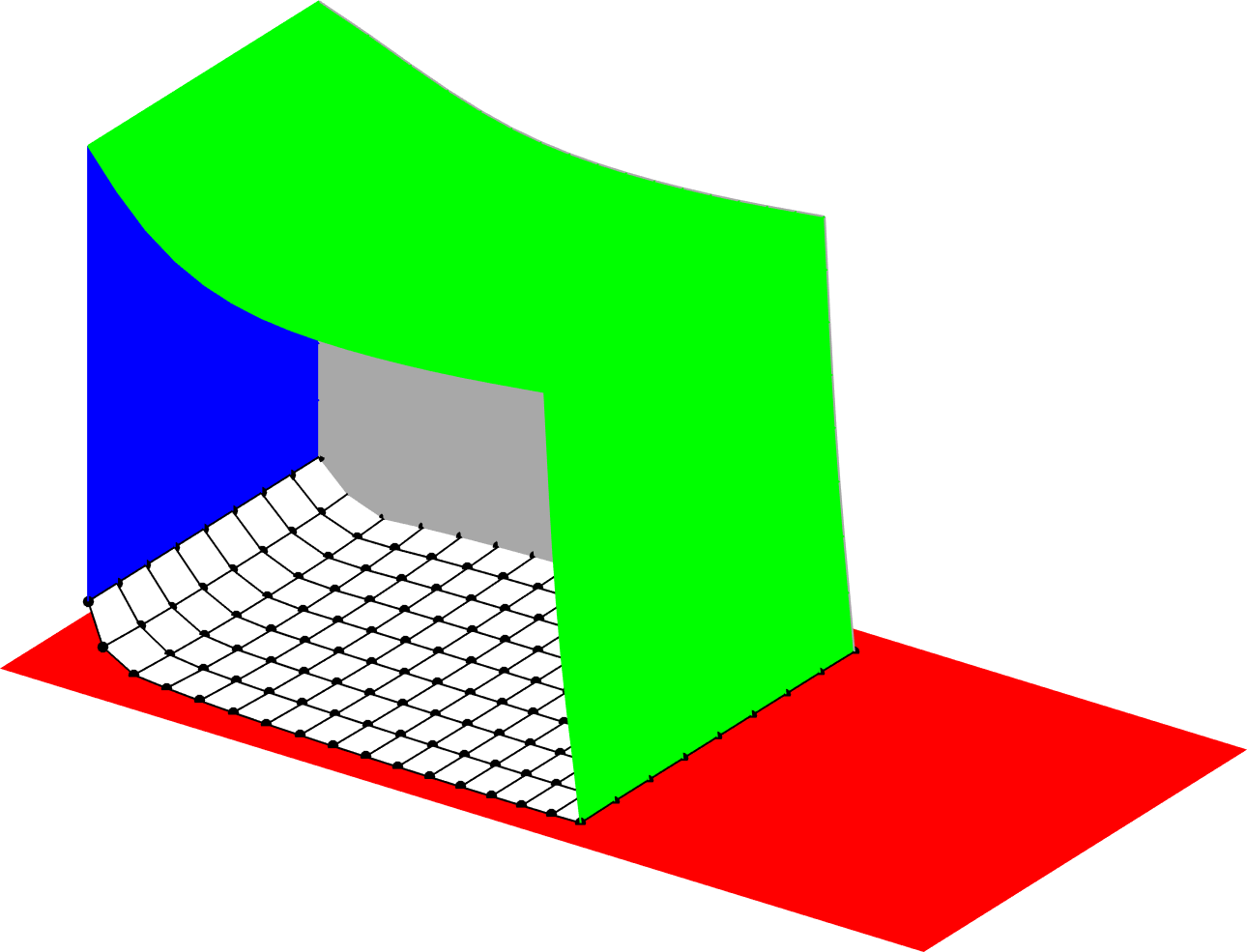}
\end{minipage}
\begin{minipage}[c]{.32\textwidth}
\vspace{0.7cm}
\centering
\includegraphics[width=0.99\textwidth]{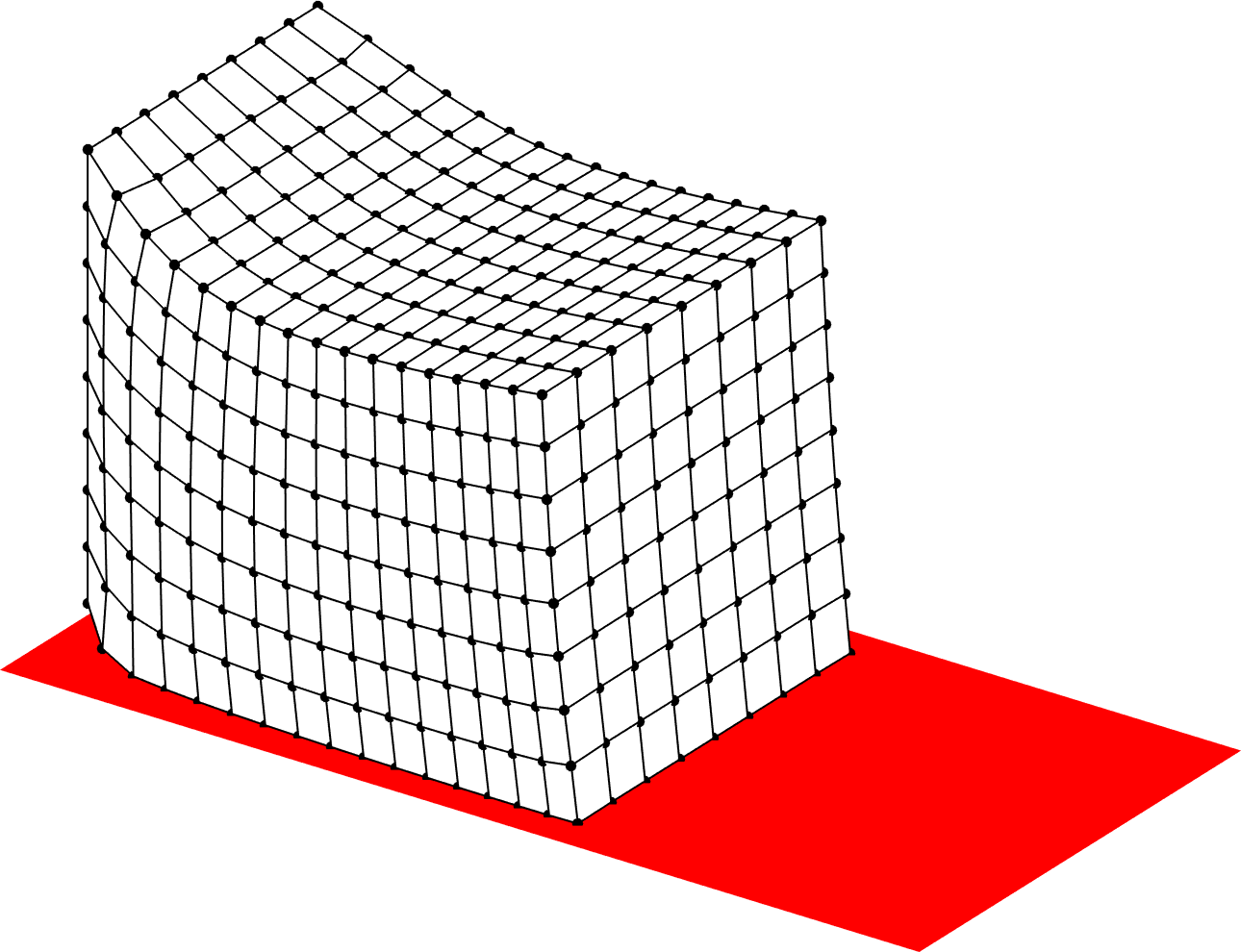}
\end{minipage}
\caption{An undeformed and deformed cuboid domain are shown in the left and middle pictures. The zero Dirichlet boundary condition for displacements is assumed on the blue part of the boundary $\Gamma_u$, surface tractions are applied to the green part of the boundary $\Gamma_p$ and the contact boundary $\Gamma_c$ is pressed against the (red) rigid plane foundation. Front faces are not visualized. The right picture shows the deformed cuboid domain decomposed in hexahedrons.
} \label{fig:setup}
\end{figure}

The resulting GE \eqref{eq-8} is solved with the SCD \ssstar Newton method described in Sections 5 and 6 and the implementation was carried out in MATLAB on a PC with an i7-7700 CPU.
The part of the code that describes the model is built on the original code of \cite{BeHaKoKuOut09} with the accelerated assembly of the elastic stiffness matrix as described in \cite{CSV}. This part of the code was also applied to the Tresca friction solver of \cite{GfrOutValsozopol}. The main difference between the implementations is that in the new code, no reduction is done to the nodes on the contact boundary, and the complete problem is treated with all domain nodes.

For the scalar $\gamma$ used in the approximation step \eqref{EqApprStepCoulomb} we used an approximation of the largest eigenvalue of $A$ obtained by five iterations of the power method. The system \eqref{EqNewtonDir} that defines the Newton direction was solved iteratively using the GMRES method with ILU factorization as a preconditioner. We stopped the GMRES method when the relative residual (non-preconditioned) is less than the prescribed tolerance $tol$. Clearly, in this case, we will lose the superlinear convergence, but we can expect linear convergence with the rate $tol$. Of course, we can use more advanced methods to solve the linear system that determines the Newton direction, but the main task of this paper is to demonstrate the efficiency of the \ssstar Newton method and not the solution of linear systems.

To improve the global convergence properties, we use a non-monotone line search heuristic as introduced in \cite{GfrOutVal21}. Recall that the quantity $\ee{\hat y}k$ given by \eqref{EqResGen} acts as a residual for GE \eqref{EqGEMod} at the point $(\ee xk, \ee{\hat d}k)$. In the context of our contact problem with Coulomb friction, given an iterate $\ee uk$ of nodal displacements and $\ee{\hat d}k$ by \eqref{EqApprStepCoulomb}, we consider
\[\ee{\hat y}k:=(\gamma (\ee uk-\ee{\hat d}k),\ee uk-\ee{\hat d}k)\]
as a residual. If the Newton direction used is denoted by $\Delta \ee uk$, the next iterate is calculated as $\ee u{k+1}=\ee uk+\ee\alpha k\Delta \ee uk$, where $\ee \alpha k$ is chosen as the first element of the sequence $1,\frac 12,\frac 14,\frac 18,\frac 1{32},\frac 1{128}, \frac{0.1}{128},\frac{0.01}{128},\ldots$ such that
\[\norm{\ee{\hat y}{k+1}}\leq\left(1-0.1\ee\alpha k+\frac{0.1}{k+1}\right)\norm{\ee{\hat y}k}.\]
We considered three different elastic bodies $\Omega(d_i)$, $i=1,2,3$, given by
\begin{align*}
  &d_1(x_1,x_2)=0.01,\\
  &d_2(x_1,x_2)=\max\{0.01-0.015\sqrt{0.5(x_1-1)^2+2(x_2-0.5)^2},0.0025\},\\
  &d_3(x_1,x_2)=0.01+0.005\big(\sin(2\pi x_1)+\cos(2\pi x_2)\big)
\end{align*}
and two load cases $L_1,L_2$ with surface tractions
\begin{align*}
  &L_1:\ P_T=(0,0,-1\,{\rm GPa}),\ P_R=(-0.2\,{\rm GPa},0,0),\\
  &L_2:\ P_T=(0,0,-1\,{\rm GPa}),\ P_R=(-0.17\,{\rm GPa},-0.1\,{\rm GPa},0).
\end{align*}
As material parameters, we chose Young's modulus $E:=70\,{\rm GPa}$ and Poisson's ratio $\nu=0.334$ (aluminum). The friction coefficient was always chosen as $\F=0.23$. In Table \ref{TabPerf1} we report for different discretization levels $lev$ the number $p$ of nodes in the contact part of the boundary and the number $n$ of degrees of freedom. Furthermore, using the relative accuracy $tol=0.1$ to calculate Newton's direction, for each of the six possible combinations of geometries $d_1,d_2,d_3$ and load cases $L_1,L_2$ we list the number of Newton iterations $it$ and the total number $gmres$ of GMRES iterations needed to reduce the initial residual $\norm{\ee{\hat y}0}$ by a factor of $10^{-12}$. The starting point $\ee u0$ for the \ssstar Newton method was always chosen as the origin. The value $gmres$ characterizes the computational complexity.

\begin{table}[t]
\begin{tabular}{|c|c|c|c|c|c|c|c|c|}
\hline
\multicolumn{3}{|c|}{}&$d_1/L_1$&$d_1/L_2$&$d_2/L_1$&$d_2/L_2$&$d_3/L_1$&$d_3/L_2$\\
\hline
$lev$&$p$&$n$&$it/gmres$&$it/gmres$&$it/gmres$&$it/gmres$&$it/gmres$&$it/gmres$\\
\hline
3&84&1\,764&13\,/\,774&13\,/\,833&13\,/\,830&13\,/\,833&14\,/\,781&13\,/\,780\\
4&144&3\,888&13\,/\,866&15\,/\,982&15\,/\,868&14\,/\,937&14\,/\,874&14\,/\,882\\
5&299&11\,661&15\,/\,952&15\,/\,1012&16\,/\,986&13\,/\,995&14\,/\,979&15\,/\,919\\
6&544&27\,744&16\,/\,1148&16\,/\,1216&14\,/\,1065&15\,/\,1101&17\,/\,1085&16\,/\,1145\\
7&1\,104&79\,488&15\,/\,1157&17\,/\,1210&14\,/\,1078&15\,/\,1189&15\,/\,1154&16\,/\,1186\\
8&2\,112&209\,088&16\,/\,1402&16\,/\,1332&16\,/\,1301&16\,/\,1443&17\,/\,1437&19\,/\,1538\\
9&4\,277&603\,057&19\,/\,1926&18\,/\,1589&16\,/\,1401&17\,/\,1692&19\,/\,1722&18\,/\,1714\\
10&8\,320&1\,622\,400&19\,/\,1864&17\,/\,1768&18\,/\,1896&19\,/\,1880&19\,/\,1920&19\,/\,2122\\
\hline
\end{tabular}
\caption
 {\label{TabPerf1}Iteration numbers for  $tol=0.1$ and a starting point $\ee u0=0$.}
\end{table}

We can see that for every geometry and every load case the iteration numbers are nearly equal and increase only slightly with finer discretizations.

In Figure \ref{FigContact} we depict for the different cases the undeformed bottom surface and the contact states for the deformed contact boundary.

\begin{figure}[t]
    \includegraphics[width=0.27\textwidth]{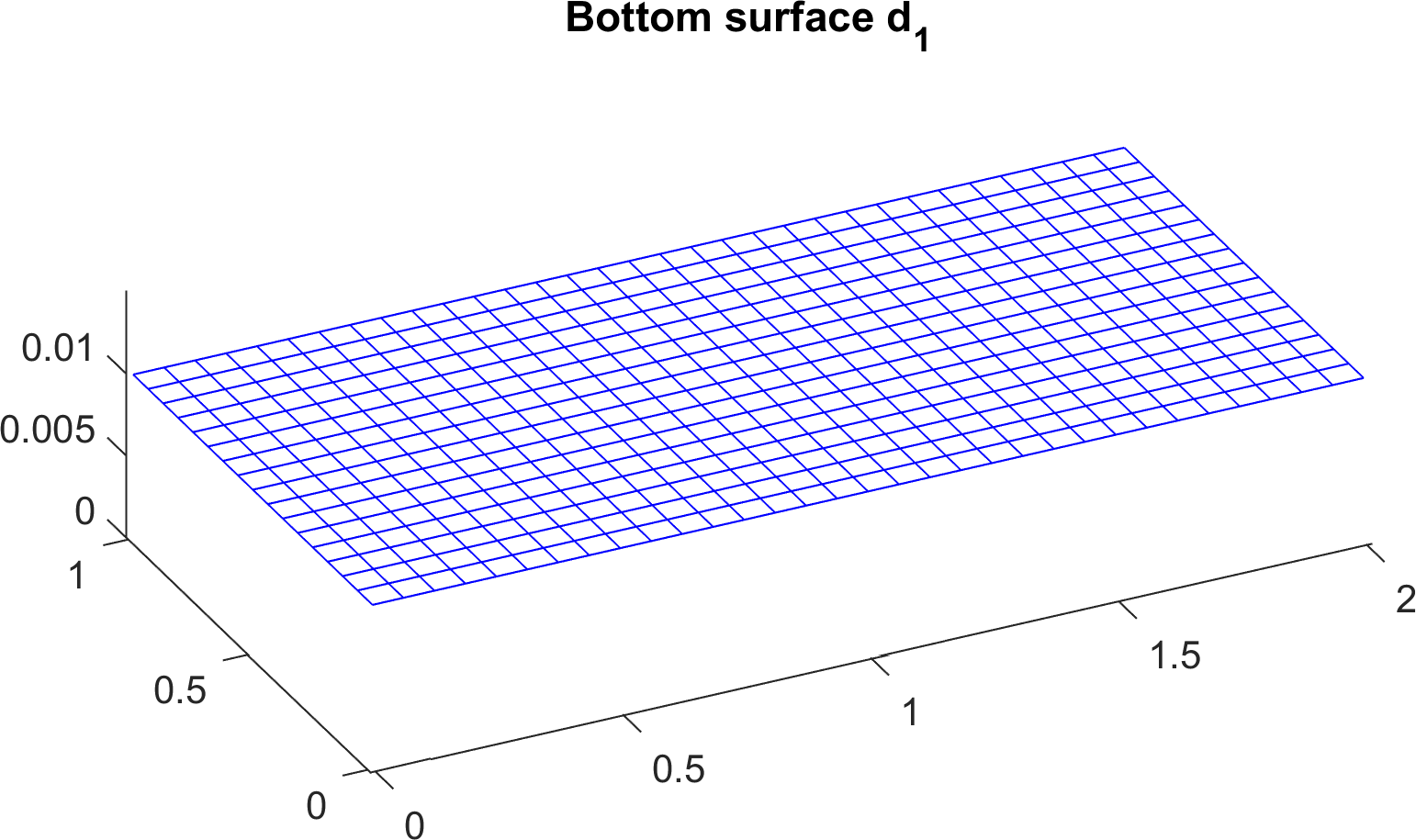}\
    \includegraphics[width=0.36\textwidth]{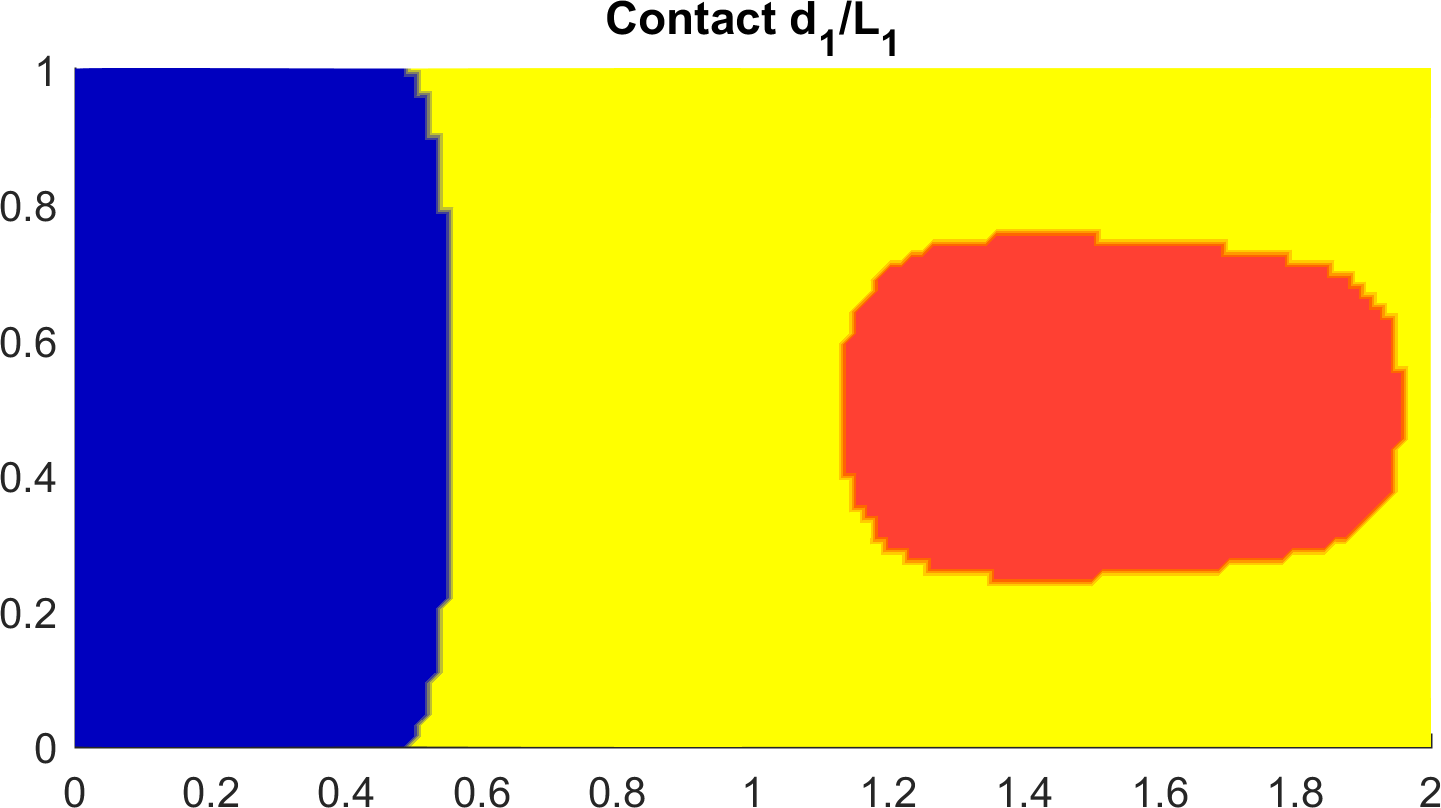}\
    \includegraphics[width=0.36\textwidth]{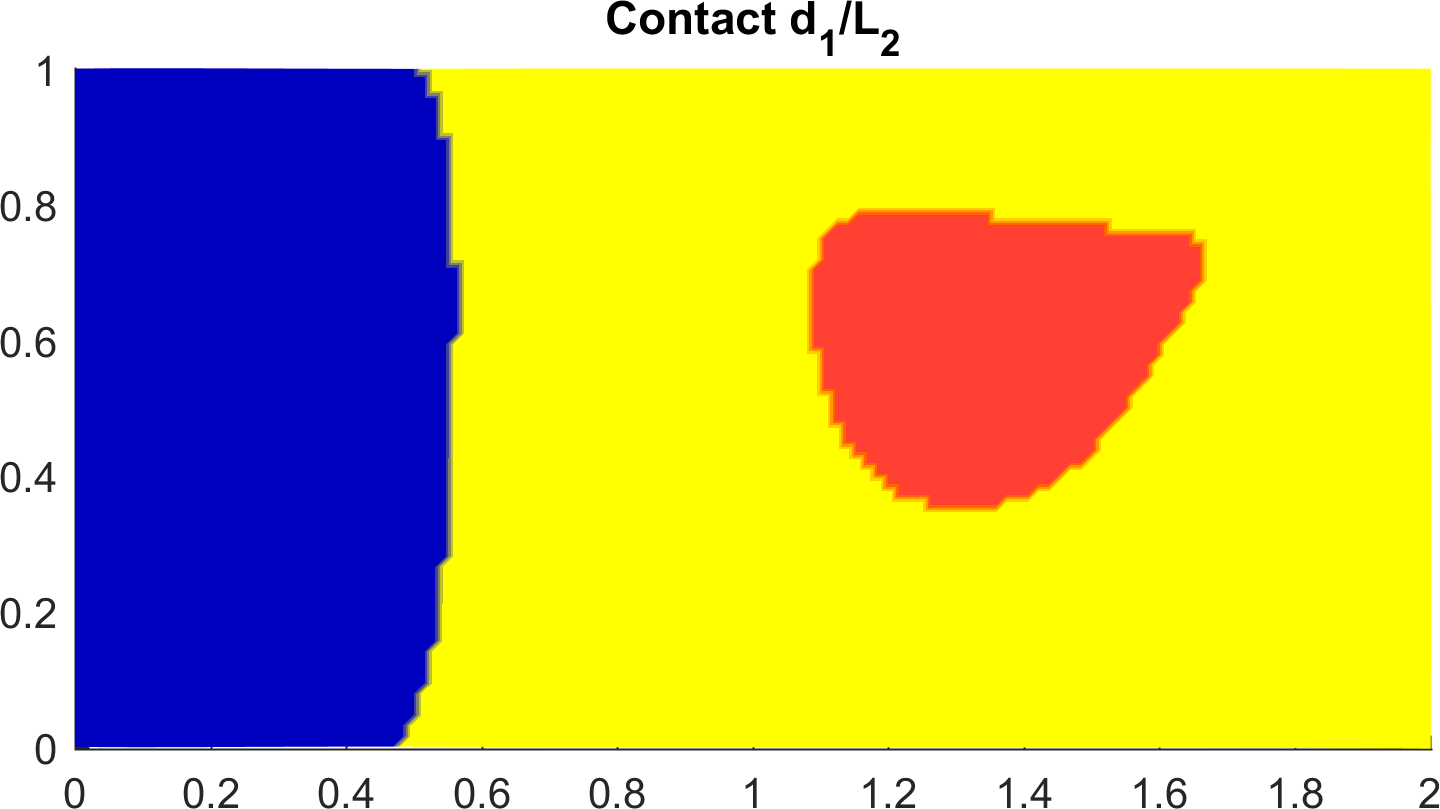}\\[1ex]
    \includegraphics[width=0.27\textwidth]{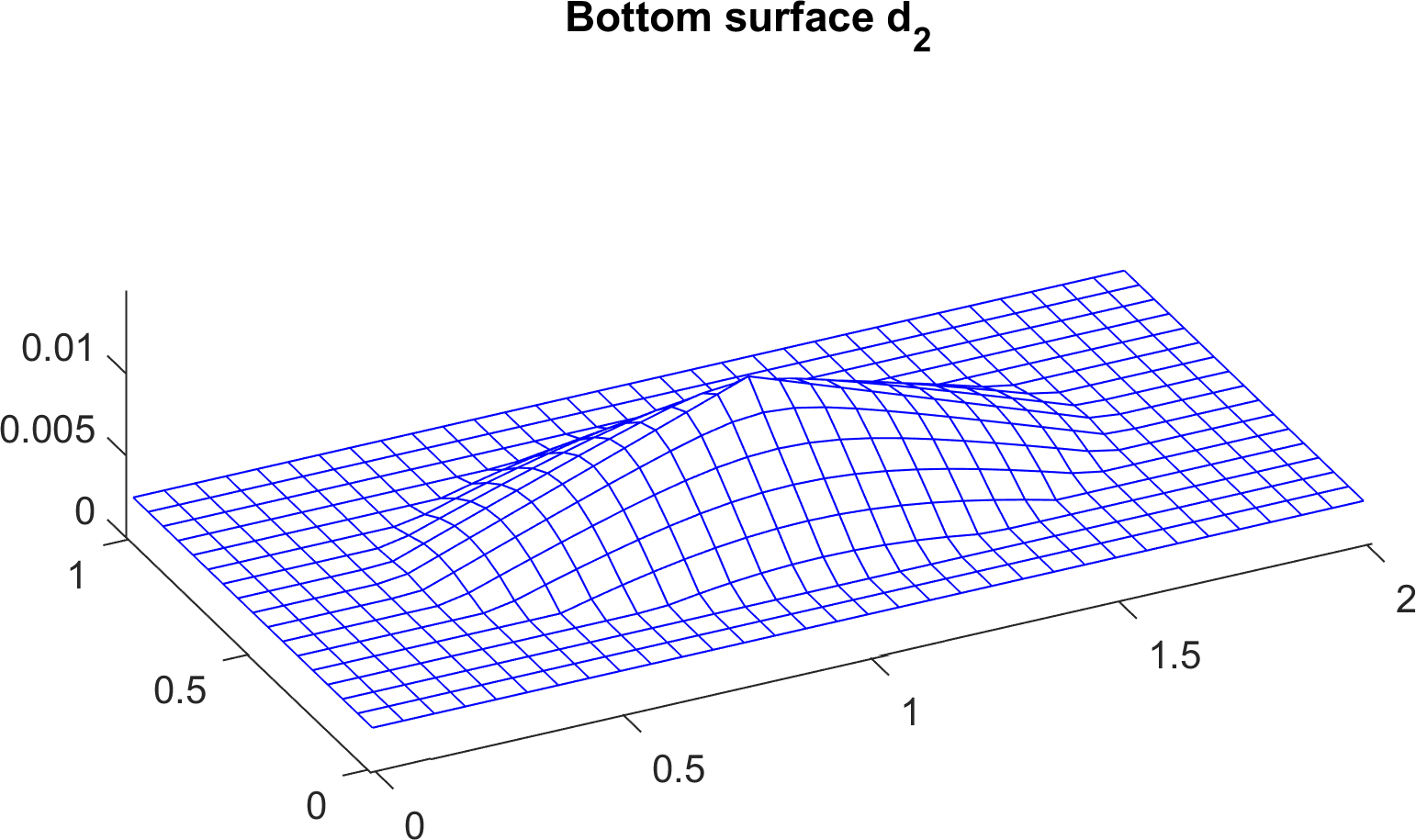}\
    \includegraphics[width=0.36\textwidth]{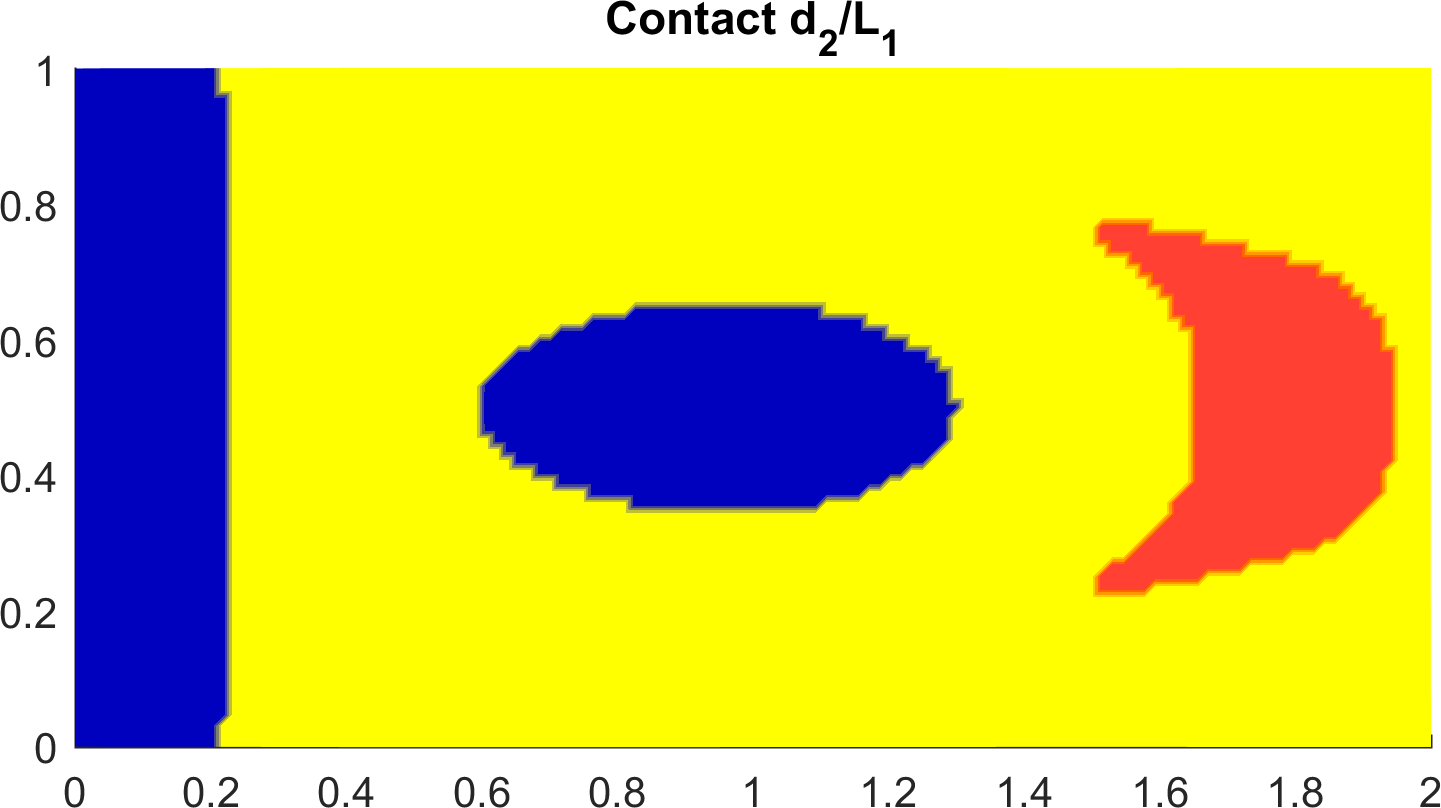}\
    \includegraphics[width=0.36\textwidth]{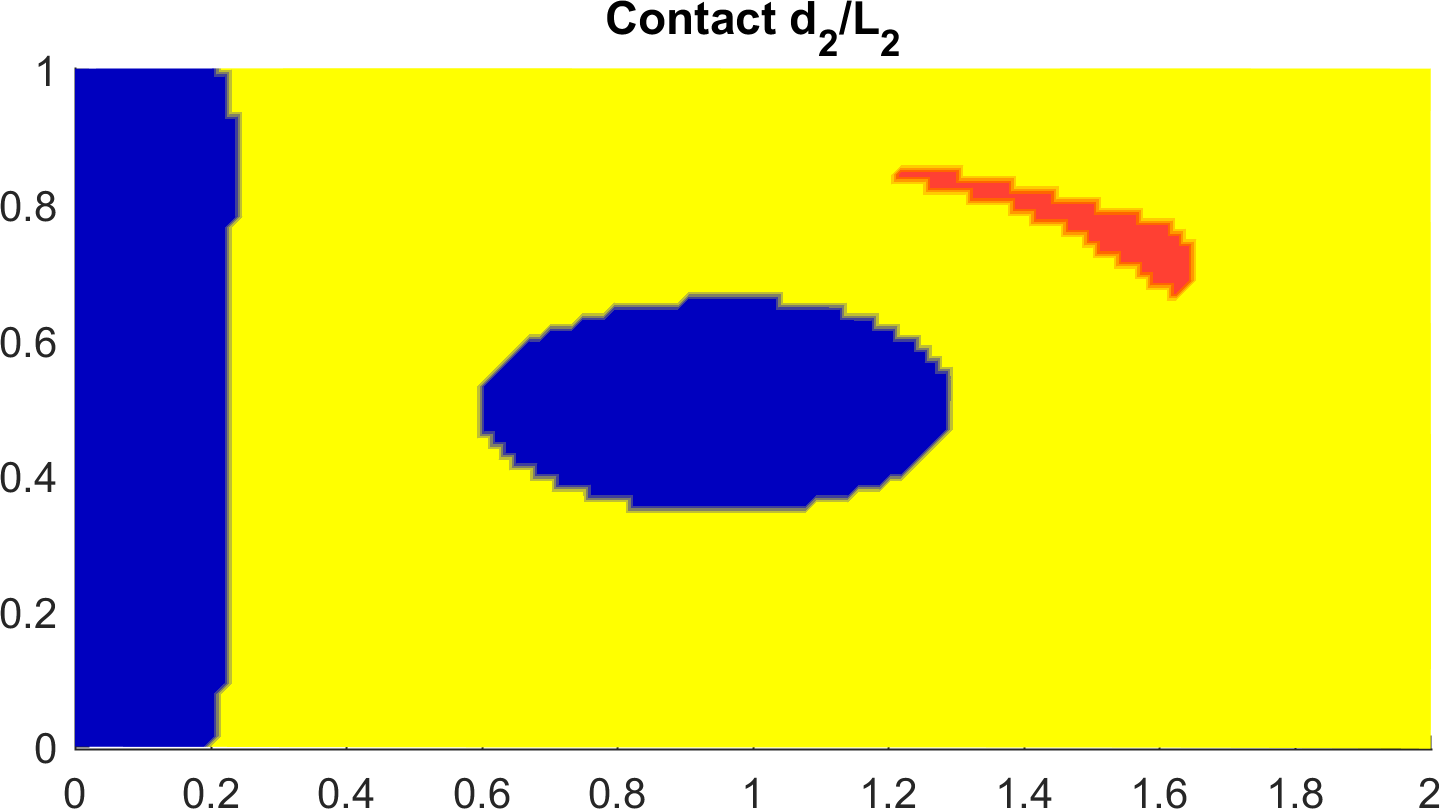}\\[1ex]
    \includegraphics[width=0.27\textwidth]{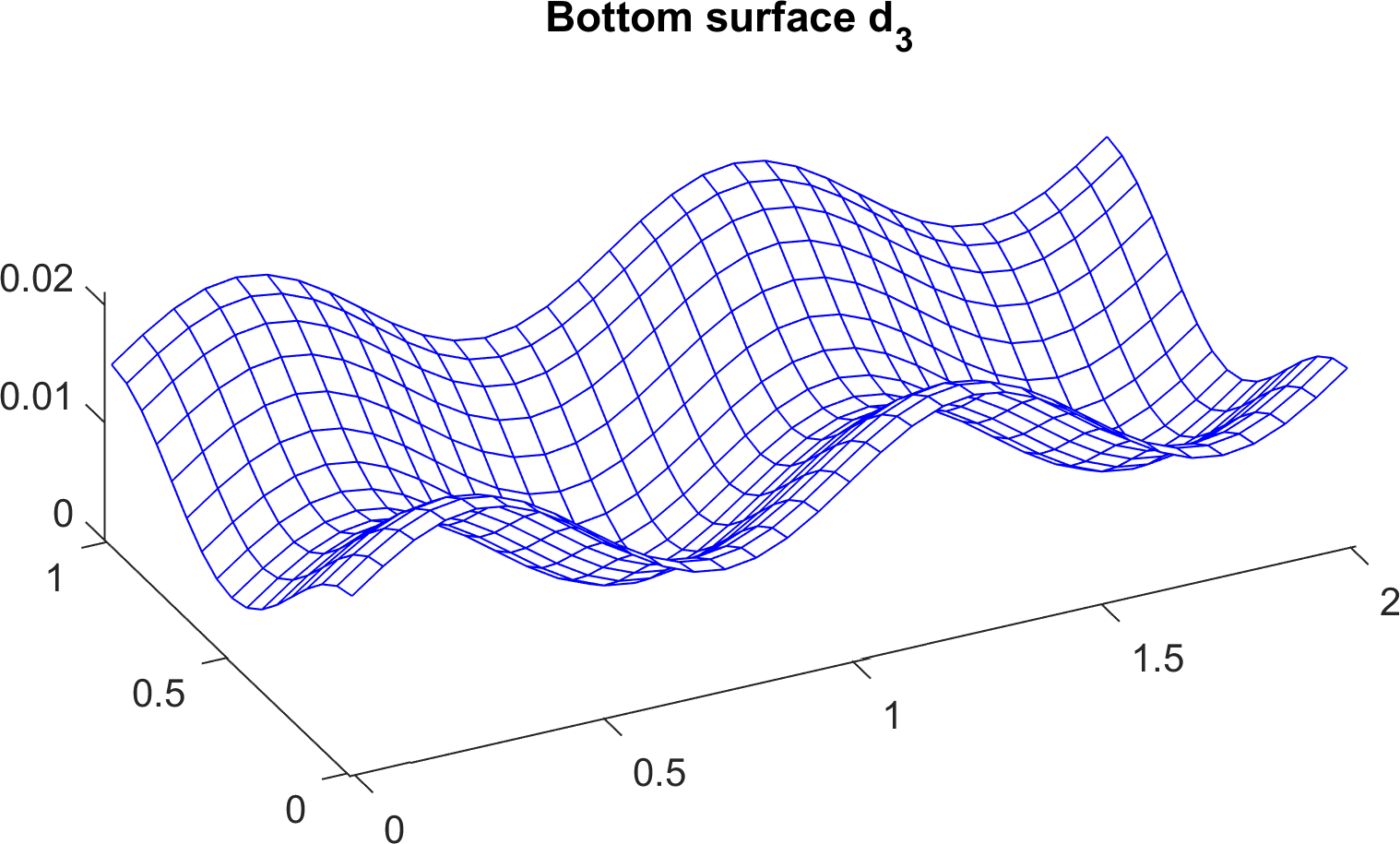}\
    \includegraphics[width=0.36\textwidth]{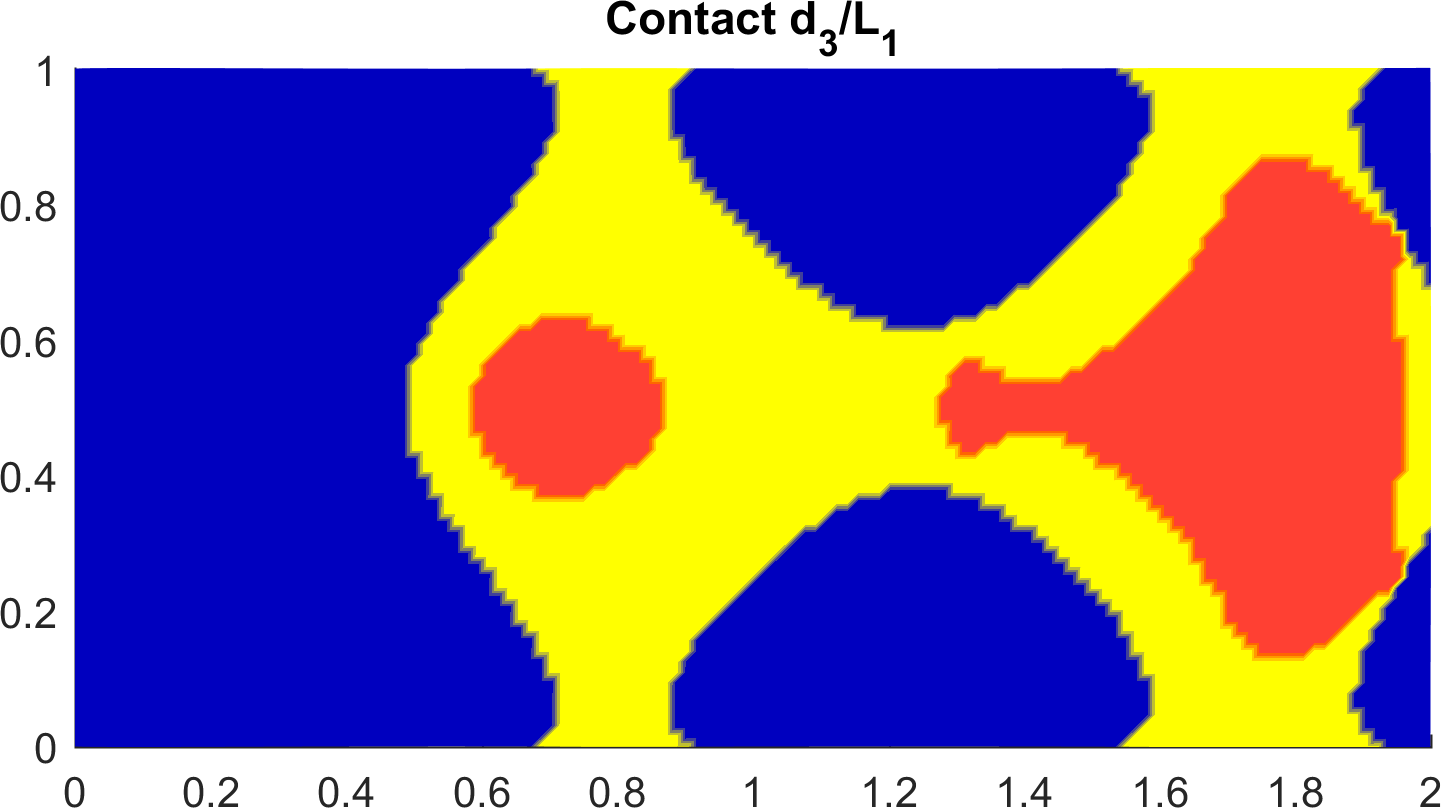}\
    \includegraphics[width=0.36\textwidth]{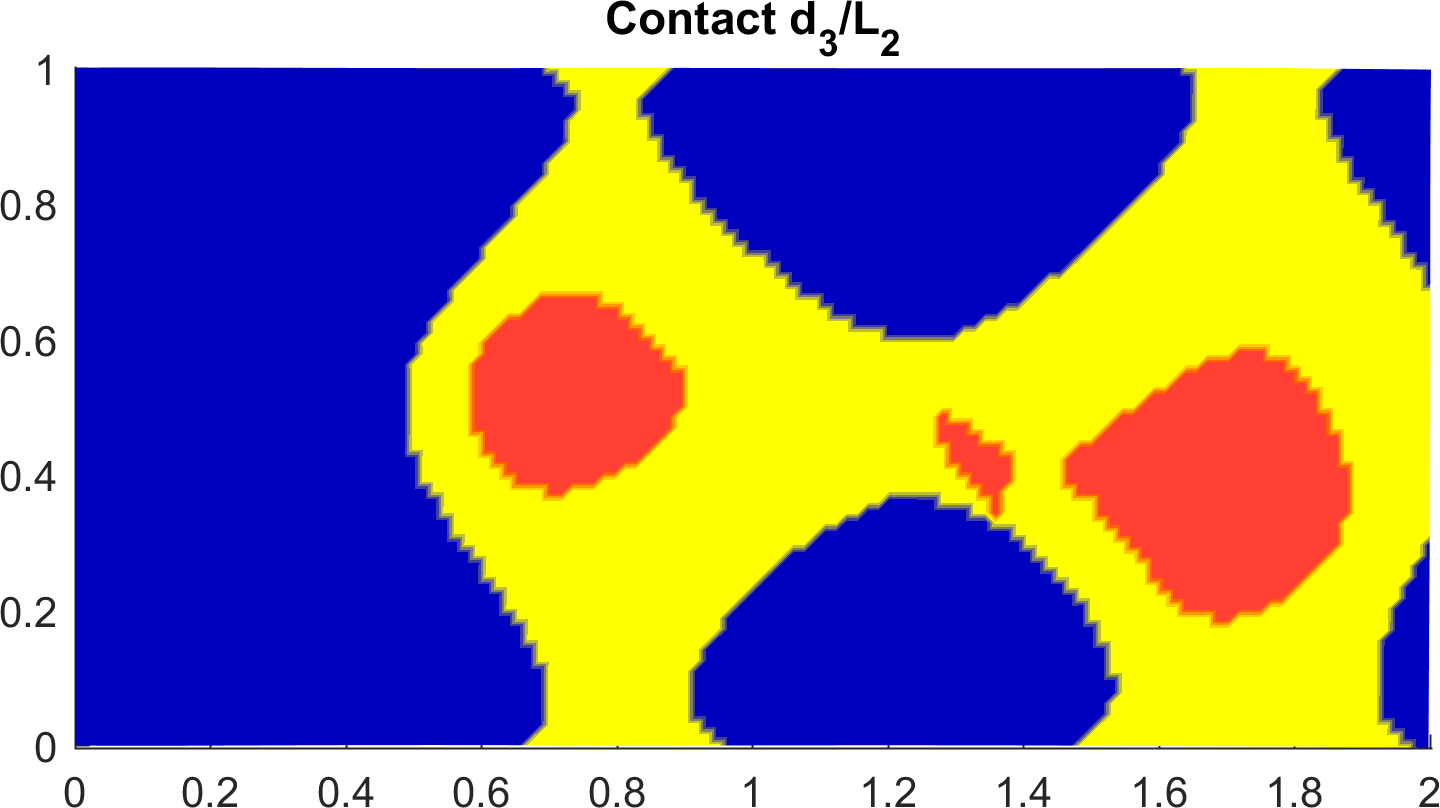}
    \caption{\label{FigContact}Undeformed bottom surface and  states in the deformed contact boundary: no contact (blue), sliding (yellow), sticking (red).}
\end{figure}

The number of iterations will decrease when a better starting point is available. In Table \ref{TabPerf2} we display the iteration numbers when we use as a starting point the solution of the previous discretization level interpolated to the finer mesh.

\begin{table}[t]
\centering
\begin{tabular}{|c|c|c|c|c|c|c|}
\hline
&$d_1/L_1$&$d_1/L_2$&$d_2/L_1$&$d_2/L_2$&$d_3/L_1$&$d_3/L_2$\\
\hline
$lev$&$it/gmres$&$it/gmres$&$it/gmres$&$it/gmres$&$it/gmres$&$it/gmres$\\
\hline
4&11\,/\,678&11\,/\,678&12\,/\,624&11\,/\,629&11\,/\,620&10\,/\,622\\
5&11\,/\,703&11\,/\,652&11\,/\,643&12\,/\,717&11\,/\,650&11\,/\,781\\
6&13\,/\,778&11\,/\,744&11\,/\,719&12\,/\,822&11\,/\,802&11\,/\,767\\
7&11\,/\,781&9\,/\,717&11\,/\,786&11\,/\,849&11\,/\,862&10\,/\,905\\
8&11\,/\,763&12\,/\,786&12\,/\,858&11\,/\,937&11\,/\,842&11\,/\,981\\
9&10\,/\,867&11\,/\,997&11\,/\,1043&13\,/\,1179&11\,/\,1052&10\,/\,1058\\
10&11\,/\,1087&10\,/\,958&11\,/\,1147&12\,/\,1050&11\,/\,1092&12\,/\,1123\\\hline
\end{tabular}
\caption
 {\label{TabPerf2}Iteration numbers for  $tol=0.1$ and a starting point $\ee u0$ set as the interpolated solution of the previous discretization level.}
\end{table}

We observe that the number of GMRES iterations is reduced by $35-45\%$ at the highest discretization level. A closer analysis shows that, as expected, we essentially avoid iterations to localize the solution. In fact, after 1--3 iterations, we have identified the correct state of all nodes in the contact part of the boundary, and the remaining iterations are only needed to reach the desired accuracy. Note that we use $tol=0.1$ and therefore expect linear convergence with the rate $0.1$. Since we want to reduce the residual by a factor of $10^{-12}$, we expect about 12 \ssstar Newton steps to achieve this goal. In most cases, we need fewer iterations: The reason is that the relative residual of the calculated direction is sometimes significantly less than $tol$.

Finally, we investigate the impact of the parameter $tol$ on the performance of the \ssstar Newton method. Here, we consider only the load case $L_2$ and that the bottom surface is given by $d_3$ with the discretization level $lev=10$. In Table \ref{TabPerf3} we report the iteration numbers $it$ of the \ssstar Newton method and the total number $gmres$ of GMRES iterations for the starting point $\ee u0=0$. We can see that for $tol=0.1$ we need the most \ssstar Newton steps; however, the total number of GMRES iterations, which measures computational complexity, is the lowest.

\begin{table}[t]
\centering
\begin{tabular}{|c|c|c|c|c|}
\hline
$tol$&$10^{-1}$&$10^{-2}$&$10^{-3}$&$10^{-4}$\\
\hline
$it/gmres$&19\,/\,2122&14\,/\,4438&12\,/\,5349&12/7337\\
\hline
\end{tabular}
\caption{\label{TabPerf3}Iteration numbers for various values of $tol$ (case $d_3/L_2$, $lev=10$, $\ee u0=0$).}
\end{table}

We show the convergence of the \ssstar Newton method for the four values of $tol$ in Figure \ref{FigConv}. We see that during the first 5 or 6 iterations, when the \ssstar Newton method tries to localize the solution, the accuracy $tol$ does not play any role in decreasing the residual, and we only need a lot of $GMRES$ iterations to compute the search directions with higher accuracy. As soon as we are sufficiently close to the solution, the increased accuracy for computing the search direction also yields better convergence rates and consequently fewer iterations for the \ssstar Newton method. However, we also need more GMRES iterations to calculate the search direction, which defeats the advantage of a better convergence rate.

\begin{figure}[t]
\centering
    \includegraphics[width=0.5\textwidth]{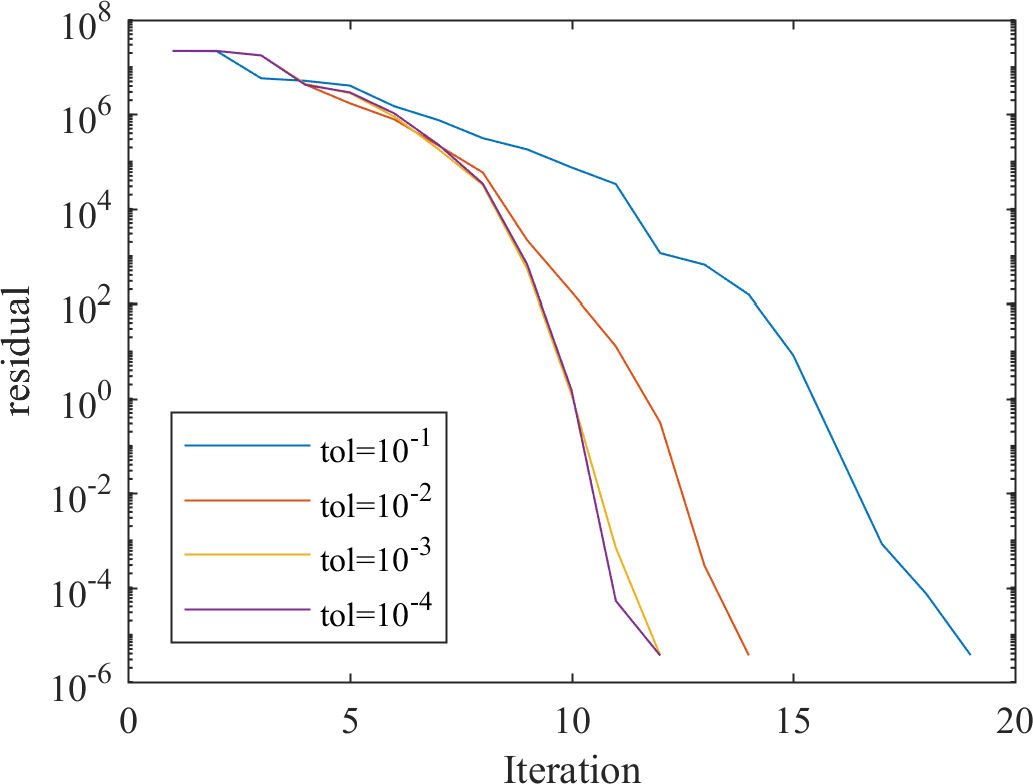}
    \caption{\label{FigConv}Convergence of the \ssstar Newton method for different values of $tol$ (Case $d_3/L_2$, $lev=10$, $\ee u0=0$).}
     \end{figure}

   \section{Conclusion}
The paper shows the abilities of the SCD \ssstar Newton method to compute, apart from the variational inequalities of the first and second kind, also a more complicated class of equilibria which can be modeled as GEs with an SCD and \ssstar multi-valued part. This is documented by a large-scale highly complicated contact problem, where the efficiency of the method enables us, in contrast to most existing approaches, to solve the respective GE on the whole domain without the time-consuming reduction to nodes lying on the contact boundary. We do hope that the SCD \ssstar Newton method will exhibit a similar performance also in some other mechanical problems having a similar structure as the considered contact problem with Coulomb friction.

The paper is dedicated to our friend A.L. Dontchev, for whom nonsmooth Newton methods definitely belonged to favorite research topics and who contributed to the development of this area in a remarkable way, cf., e.g., \cite{DonRo}, \cite{CiDoK}.

\section*{ Acknowledgement}
The authors are indebted to J. Haslinger and P. Beremlijski for valuable advices concerning the modeling and discretization of static contact problems with Coulomb friction. Further, our sincere thanks are due to both reviewers for their valuable suggestions.

\section{Appendix}
The next statement completes the assertion of Proposition \ref{prop:SCD}.\\

\begin{proposition}
Consider the mapping $\widetilde{Q}$ and assume that
 $(\bar{v},\bar{g},\bar{\vartheta})\in M_{2}$. Then
\[ \Sp\widetilde{Q}(\bar{v},\bar{g},\bar{\vartheta})=
 \left \{ \rge(I_3,0),\
 \rge  \left [  \left (\begin{matrix}
I_{2} & 0\\
0 & 0
\end{matrix}\right),
\left(\begin{matrix}
0 &
 -\mathcal{F}  \frac{\bar{v}_{12}}{\| \bar{v}_{12} \|} \\
0 & 1
\end{matrix}\right )
\right ]\right \}.
\]
For $(\bar{v},\bar{g},\bar{\vartheta}) \in M_{3}^{-}$ one has
\begin{equation}\label{eq-32}
\Sp \widetilde{Q}(\bar{v},\bar{g},\bar{\vartheta})=
 \left \{ \rge(0,I_3),\
 \rge  \left[
 \left(\begin{matrix}
\frac{\bar g\bar g^T}{\norm{\bar g}^2}& 0\\
0 & 0
\end{matrix}\right ),
\left (\begin{matrix}
I_2-\frac{\bar g\bar g^T}{\norm{\bar g}^2} &
 -\mathcal{F} \frac{\bar{g}}{\norm{\bar g}}\\
0 & 1
\end{matrix}\right )
  \right ]\right \}
\end{equation}
and for $(\bar{v},\bar{g},\bar{\vartheta})=(0,0,0)\in M_{4}$ we have
\begin{align}
 \nonumber&\lefteqn{\Sp\widetilde{Q}(\bar{v},\bar{g},\bar{\vartheta})=
 \left \{ \rge(I_3,0),\  \rge(0,I_3)\right\}}\\
\label{eq-33} &\qquad\cup
 \left\{\rge  \left [
 \left(\begin{matrix}
(1-\alpha)I_2+\alpha ww^{T}& 0\\
0 & 0
\end{matrix}\right ),
\left (\begin{matrix}
\alpha(I_2-ww^{T}) &
 -\mathcal{F} w\\
0 & 1
\end{matrix}\right)
\right ]\bigg|
\ \alpha\in[0,1], w \in \mathbb{S}_{\R^2} \right \}.
\end{align}

\end{proposition}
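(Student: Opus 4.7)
\medskip

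\noindent\textbf{Proof proposal.} The plan is to compute $\Sp\tilde Q(\bar v,\bar g,\bar\vartheta)$ directly from its definition as the outer limit
\[
\Sp\tilde Q(\bar v,\bar g,\bar\vartheta)=\Limsup_{(v,g,\vartheta)\longsetto{\OO_{\tilde Q}}(\bar v,\bar g,\bar\vartheta)}\widehat\Sp\tilde Q(v,g,\vartheta),
\]
using that $\OO_{\tilde Q}=L\cup M_1\cup M_3^+$ (by Proposition \ref{prop:SCD}) and that on each stable stratum $\widehat\Sp\tilde Q$ is already described explicitly in \eqref{eq-12}, \eqref{eq-13}, \eqref{eq-14}. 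For the limits that emerge from the stratum $M_1$ I will use the well-conditioned basis representation \eqref{EqM1Alt1} from the corollary, since the raw basis in \eqref{eq-14} has entries that blow up as $\bar v_{12}\to 0$; recall that convergence of $\rge(A_k,B_k)$ in $\Z_n$ is guaranteed whenever the basis matrices converge to a matrix of full column rank, so the well-conditioned form is exactly what is needed.

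I would then treat the three cases separately. For $(\bar v,\bar g,\bar\vartheta)\in M_2$, only $L$ and $M_1$ have $(\bar v,\bar g,\bar\vartheta)$ in their closure (since $\bar v_3=\bar\vartheta=0$ but $\bar v_{12}\ne 0$). Sequences from $L$ yield the constant value $\rge(I_3,0)$, and for sequences from $M_1$ I would plug into \eqref{eq-14} with $v_{12}\to\bar v_{12}\ne 0$, $\vartheta\to 0^-$; the factor $-\F\vartheta/\|v_{12}\|$ tends to $0$ and the remaining entries are continuous, giving the second subspace listed. For $(\bar v,\bar g,\bar\vartheta)\in M_3^-$ only $M_3^+$ and $M_1$ approach it. Sequences from $M_3^+$ yield $\rge(0,I_3)$. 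For sequences from $M_1$ the identity $g=-\F\vartheta\, v_{12}/\|v_{12}\|$ forces $v_{12}/\|v_{12}\|\to \bar g/\|\bar g\|$, while $\vartheta\to\bar\vartheta<0$ and $\|v_{12}\|\to 0$ give $\|v_{12}\|/(\|v_{12}\|-\F\vartheta)\to 0$ and $-\F\vartheta/(\|v_{12}\|-\F\vartheta)\to 1$; substituting these limits into \eqref{EqM1Alt1} directly produces the second basis in \eqref{eq-32}.

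The main work is the case $M_4=\{(0,0,0)\}$. Here all four strata have the origin in their closure, but only $L$, $M_1$, $M_3^+$ lie in $\OO_{\tilde Q}$. From $L$ one obtains $\rge(I_3,0)$, from $M_3^+$ one obtains $\rge(0,I_3)$. For sequences from $M_1$ one has two \emph{independent} free parameters: the unit direction $w:=\lim v_{12}/\|v_{12}\|\in\mathbb{S}_{\R^2}$ and the scalar
\[\alpha:=\lim \frac{-\F\vartheta}{\|v_{12}\|-\F\vartheta}\in[0,1],\]
since $-\F\vartheta\ge 0$ and $\|v_{12}\|>0$. Passing to the limit in \eqref{EqM1Alt1} gives exactly the family displayed in \eqref{eq-33}, and I would verify that each resulting $4\times 3$ matrix has full column rank (the top-left $2\times 2$ block $(1-\alpha)I_2+\alpha ww^T$ is positive semidefinite with eigenvalues $1$ and $1-\alpha$, the bottom-left block $\alpha(I_2-ww^T)$ handles the deficiency when $\alpha=1$, and the last column is independent thanks to its $1$ in the last row), so that basis convergence indeed yields convergence in $d_{\Z_n}$.

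Conversely I must argue that these candidate subspaces exhaust $\Sp\tilde Q(\bar v,\bar g,\bar\vartheta)$. The $M_2$ and $M_3^-$ cases are clear because the parametrisations in \eqref{eq-14} and \eqref{EqM1Alt1} depend continuously on $(v,g,\vartheta)$ on the relevant strata. For $M_4$, given any convergent sequence $(v^k,g^k,\vartheta^k)\longsetto{\OO_{\tilde Q}}(0,0,0)$ I would pass to a subsequence so that the stratum is fixed and (in the $M_1$ case) both $v^k_{12}/\|v^k_{12}\|$ and $-\F\vartheta^k/(\|v^k_{12}\|-\F\vartheta^k)$ converge; the limit projector is then uniquely determined by $(\alpha,w)$, showing every limit has the claimed form. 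The realisation that every $(\alpha,w)\in[0,1]\times\mathbb{S}_{\R^2}$ is actually attained by some sequence in $M_1$ is immediate: for fixed $w$ take $v^k_{12}=t_k w$ and $\vartheta^k=-s_k$ with $t_k,s_k\downarrow 0$ and the ratio $s_k/(t_k+\F s_k)\to \alpha/\F$ prescribed.

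The main obstacle I anticipate is keeping the parametrisation of the $M_1$ sequences approaching the origin transparent enough to see that $\alpha$ and $w$ are the only surviving invariants and that they are genuinely independent; using the well-conditioned basis \eqref{EqM1Alt1} rather than \eqref{eq-14} is what makes this bookkeeping manageable and rules out spurious limits coming from the ill-conditioned factor $-\F\vartheta/\|v_{12}\|$.
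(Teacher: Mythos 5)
Your proof is correct and follows essentially the same route as the paper's: compute $\Sp\tilde Q$ as the outer limit of $\widehat\Sp\tilde Q$ over the three strata $L$, $M_1$, $M_3^+$ of $\OO_{\tilde Q}$, using \eqref{eq-12}--\eqref{eq-14} and the well-conditioned basis \eqref{EqM1Alt1} for the sequences emanating from $M_1$, with the free parameters $w=\lim v_{12}/\|v_{12}\|$ and $\alpha=\lim(-\F\vartheta)/(\|v_{12}\|-\F\vartheta)$ in the $M_4$ case. You spell out a few details the paper leaves implicit (which strata can actually approach each set, realisability of every $(\alpha,w)$, full column rank of the limit bases), but the underlying argument is the same.
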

\begin{proof}
The case where $(\bar{x},\bar{g},\bar{\vartheta})\in M_{2}$ is a simple consequence of the relations (\ref{eq-12}) and (\ref{eq-14}). Formula (\ref{eq-32}) follows from (\ref{eq-13}) and \eqref{EqM1Alt1}. Finally, in case of $M_{4}$  one has to  analyze various sequences $(v,g,\vartheta)\to (0,0,0)$ belonging to $L\cup M_1 \cup M_3^+$. For the cases where $(v,g,\vartheta)$ belongs to $L\cup M_3^+$ we can simply apply \eqref{eq-12} and \eqref{eq-13} to calculate the limits. When $(v,g,\vartheta)\in M_1$ one has $-\F\vartheta\,/\,(\norm{v_{12}}-\F\vartheta)\to \alpha\in[0,1]$ and $v_{12}\,/\,\norm{v_{12}}\to w$ for suitable subsequences, and \eqref{eq-33} follows from \eqref{EqM1Alt1}.  All subspaces arising in the above formulas have dimension three, and so the statement has been established.
\end{proof}
\bigskip
Note that for $(\bar{x},\bar{g},\bar{\vartheta})\in M_{4}$
the collection $\Sp\widetilde{Q}(\bar{v},\bar{g},\bar{\vartheta})$ contains an infinite number of subspaces parameterized by $\alpha$ and $w$.
\section*{Declarations}

\noindent{\bf Funding. }The authors express their gratitude for the support from the Austrian Science Fund
(FWF)  grant P29190-N32 (H. Gfrerer, M. Mandlmayr) and the Czech Science Foundation (GACR) grant GA22-15524S (J.V.Outrata), grant GF21-06569K and the  MSMT CR grant 8J21AT001 (J.Valdman).

\noindent{\bf Conflict of interest.} The authors have no competing interests to declare that are relevant to the content of this article.

\noindent{\bf Data availability.}
Data sharing is not applicable to this article as no datasets were generated or analyzed during the current study.


\begin{thebibliography}{99}
\bibitem{Aze}
{\sc D. Az\'{e}, C.C. Chou}, {\em On a Newton type iterative method for solving inclusions}, Math. Oper. Res. 20 (1995), pp. 790--800, \url{https://doi.org/10.1287/moor.20.4.790}.
%
\bibitem{Bere1}
{\sc P. Beremlijski, J. Haslinger, M. Ko\v{c}vara, J.V.~Outrata}, {\em Shape optimization in contact problems with Coulomb friction}, SIAM J. Optim. 13 (2002), pp.~561--587, \url{https://doi.org/10.1137/S1052623401395061}.
%
\bibitem{BeHaKoKuOut09}
{\sc P. Beremlijski, J. Haslinger, M. Ko\v{c}vara, R. Ku\v{c}era,  J.V.~Outrata}, {\em Shape optimization in three-dimensional contact problems with Coulomb friction}, SIAM J. Optim. 20 (2009), pp.~416--444, \url{https://doi.org/10.1137/080714427}.
%
\bibitem{BM88}
{\sc  E. Bierstone, P.D. Milman}, {\em Semianalytic and subanalytic sets}, Publications Mathématiques de l'IHÉS, Tome 67 (1988), pp. 5--42, \url{https://doi.org/10.1007/BF02699126}.
%
\bibitem{BlFrFrRa16}
{\sc H. Blum, H. Frohne, J. Frohne, A. Rademacher}, {\em  Semi-smooth Newton methods for mixed FEM discretizations of higher-order for frictional, elasto-plastic two-body contact problems}, Comput. Methods Appl. Mech. Engrg., 309 (2016), pp. 131--151, \url{https://doi.org/10.1016/j.cma.2016.06.004}.
%
\bibitem{Dias}
{\sc S. Dias, G. Smirnov}, {\em On the Newton method for set-valued maps}, Nonl. Anal. 75 (2012), pp. 1219 - 1230, \url{https://doi.org/10.1016/j.na.2011.04.005}.

\bibitem{CSV}
{\sc M. Čermák, S. Sysala, Jan Valdman}, {\em Efficient and flexible MATLAB implementation of 2D and 3D elastoplastic problems}, Applied Mathematics and Computation 355 (2019), pp.~595-614, \url{https://doi.org/10.1016/j.amc.2019.02.054}.

\bibitem{CiDoK}
{\sc R. Cibulka, A.L. Dontchev, A.Y. Kruger}, {\em Strong metric subregularity of mappings in variational analysis and optimization}, J. Math. Anal. Appl. 457 (2018), pp.~1247-1282, \url{https://doi.org/10.1016/j.jmaa.2016.11.045}.

\bibitem{DonQuiZl06}
{\sc A.L. Dontchev, M. Quincampoix, N. Zlateva}, {\em Aubin criterion for metric regularity},  J. Convex Anal. 13 (2006), pp.~281-297.

\bibitem{DonRo}
{\sc A.L. Dontchev, R.T. Rockafellar}, {\em Implicit Function and Solution Mappings}, 2nd edn. Springer, New York, 2014.


\bibitem{EJ}
{\sc C. Eck, J. Jaru\v{s}ek, M. Krbec}, {\em Unilateral Control Problems: Variational Methods and Existence Theorems.} CRC Press, New York, 2005.
%
%
\bibitem{FaKaSa13}
{\sc F. Facchinei, C. Kanzow, S. Sagratella}, {\em
QVILIB: A Library of Quasi-Variational Inequality Test Problems,}
Pac. J. Optim. 9 (2013), pp.~225--250.
%

%
\bibitem{GfrOut21}
{\sc H. Gfrerer, J.~V. Outrata}, {\em On a semismooth* Newton method for solving generalized equations}, SIAM J. Optim. 31 (2021), pp.~489--517, \url{https://doi.org/10.1137/19M1257408}.
%
\bibitem{GfrOut22a}
{\sc H. Gfrerer, J.~V. Outrata}, {\em On (local) analysis of multifunctions via subspaces contained in graphs of generalized derivatives}, J. Math. Anal. Appl. 508(2022), \url{https://doi.org/10.1016/j.jmaa.2021.125895}.
%
\bibitem{GfrOutVal21}
{\sc H. Gfrerer, J.~V. Outrata, J. Valdman}, {\em On the application of the SCD semismooth* Newton method to variational inequalities of the second kind}, submitted, arXiv:2112.08080.

\bibitem{GfrOutValsozopol}
{\sc H. Gfrerer, J.~V. Outrata, J. Valdman}, {\em On the solution of contact problems with Tresca friction by the semismooth* Newton method}, I. Lirkov and S. Margenov (eds): LSSC 2021, LNCS 13127 (2022), pp.~479--487.

\bibitem{GfrYe17a}{\sc H. Gfrerer, J.~J. Ye}, {\em New constraint qualifications for mathematical programs with equilibrium constraints via variational analysis}, SIAM J. Optim. 27 (2017), pp.~842--865, \url{https://doi.org/10.1137/16M1088752}.

\bibitem{GJKR}
{\sc  J. Gwinner, B. Jadamba, A.~A. Khan, F. Raciti},
{\em Uncertainty Quantification in Variational Inequalities},
 Chapman $\&$ Hall/CRC, 2021.


\bibitem{HaNi}
{\sc J. Haslinger, M. Mietinen, P.D. Panagiotopoulos}, {\em Finite Element Method for Hemivariational Inequalities}, Kluwer Academic Publishers, Dordrecht, 1999.
%
\bibitem{Jo79a}
{\sc N.~H. Josephy}, {\em Newton's method for generalized equations and the PIES energy model}, Ph.D.
Dissertation, Department of Industrial Engineering, University of Wisconsin-Madison, 1979.
%
\bibitem{Jo79b}
{\sc N.~H. Josephy}, {\em Quasi-Newton method for generalized equations}, Technical Summary Report 1966,
Mathematics Research Center, University of Wisconsin-Madison, 1979.
%
\bibitem{Jou07}
{\sc A. Jourani}, {\em Radiality and semismoothness}, Control and Cybernetics 36 (2007), pp.~669--680.
%
\bibitem{KO}
{\sc M. Ko\v{c}vara, J.V. Outrata}, {\em  On a class of quasi-variational inequalities}, Optim. Methods \& Software 5(1995), pp.~275--295, \url{https://doi.org/10.1080/10556789508805617}.
%
\bibitem{Kuc}
{\sc R.Ku\v{c}era}, {\em Minimizing quadratic functions with separable quadratic constraints}, Optim. Methods and Software 22(2007), pp.~453--467, \url{https://doi.org/10.1080/10556780600609246}.
%
\bibitem{Ku88}
{\sc B. Kummer}, {\em Newton's method for non-differentiable functions}, in Advances in Mathematical
Optimization, J. Guddat et al, eds., Series in Mathematical Research, Vol.45, pp.~114--125.
Akademie-Verlag, Berlin, 1988.
%
\bibitem{Lig}
{\sc T. Ligursk\'y}, {\em Theoretical analysis of discrete contact problems with Coulomb friction}, Appl. Math. 57 (2012), pp.~263--295, \url{https://doi.org/10.1007/s10492-012-0016-9}.
%
\bibitem{Necas}
{\sc J. Ne\v{c}as,  J. Jaru\v{s}ek, J. Haslinger}, {\em  On the solution of the variational inequality to the Signorini problem with small friction}, Boll. Unione Mat. Ital. V. Ser.B 17(1980), pp.~796--811.
%
\bibitem{OKZ}
{\sc J.V.~Outrata, M.~Ko\v{c}vara, J.~Zowe}, {\em Nonsmooth Approach to Optimization Problems with
Equilibrium Constraints}, Kluwer Academic Publishers, Dordrecht, 1998.
%
%
\bibitem{QiSun93}
{\sc L. Qi, J. Sun}, {\em A nonsmooth version of Newton's method}, Math. Program., 58 (1993),
pp.~353--367, \url{https://doi.org/10.1007/BF01581275}.
%
\bibitem{Renard}
{\sc Y. Renard}, {\em A uniqueness criterion for the Signorini  problem with Coulomb friction}, SIAM J. Math. Anal. 38(2006), pp.~452--467, \url{https://doi.org/10.1137/050635936}.
%
\bibitem{Ro85}
{\sc R.~T. Rockafellar}, {\em Maximal monotone relations and the second derivatives of nonsmooth functions}, Ann. Inst. H. Poincar\'e Analyse Non Lin\'eaire 2 (1985), pp.~167--184.
%
%
\bibitem{RoWe98}
{\sc R.~T. Rockafellar, R.~J.-B. Wets }, {\em Variational Analysis}, Springer, Berlin, 1998.

\bibitem{Wohl11}
{\sc B.I.~Wohlmuth}, {\em Variationally consistent discretization schemes and numerical algorithms for contact problems}, Acta Numer. 20 (2011), pp.~569--734, \url{https://doi.org/10.1017/S0962492911000079}.
\end{thebibliography}
\end{document}